\renewcommand{\theequation}{\thesection\arabic{equation}}
\newtheorem{theorem}{Theorem}
\newtheorem{lemma}{Lemma}
\newtheorem{corollary}{Corollary}
\theoremstyle{definition}
\newtheorem{assumption}{Assumption}
\newtheorem{example}{Example}
\newtheorem{remark}{Remark}
\begin{document}


\renewcommand{\baselinestretch}{1.6}





\fontsize{12}{14pt plus.8pt minus .6pt}\selectfont \vspace{0.8pc}
\centerline{\large\bf Calibrated zero-norm regularized LS estimator for}
\vspace{2pt} \centerline{\large\bf high-dimensional error-in-variables regression}
\vspace{.4cm} \centerline{Ting Tao,\ \ Shaohua Pan\ \ and\ \ Shujun Bi}
\vspace{.4cm} \centerline{August 21, 2019}
\vspace{.55cm} \fontsize{9}{11.5pt plus.8pt minus.6pt}\selectfont


\begin{abstract}
  This paper is concerned with high-dimensional error-in-variables regression
  that aims at identifying a small number of important interpretable factors
  for corrupted data from the applications where measurement errors or missing data
  can not be ignored. Motivated by CoCoLasso due to \cite{Datta16}
  and the advantage of the zero-norm regularized LS estimator over Lasso for clean data,
  we propose a calibrated zero-norm regularized LS (CaZnRLS) estimator by
  constructing a calibrated least squares loss with a positive definite
  projection of an unbiased surrogate for the covariance matrix of covariates,
  and use the multi-stage convex relaxation approach to compute this estimator.
  Under a restricted strong convexity on the true covariate matrix,
  we derive the $\ell_2$-error bound of every iterate and establish
  the decreasing of the error bound sequence and the sign consistency
  of the iterates after finite steps. The statistical guarantees are also
  provided for the CaZnRLS estimator under two types of measurement errors.
  Numerical comparisons with CoCoLasso and NCL (the nonconvex Lasso
  of \cite{Loh11}) show that CaZnRLS has better relative RMSE
  as well as comparable even more correctly identified predictors.
\end{abstract}
\vspace{9pt}
\noindent {\it Key words and phrases:}
Error-in-variables regression, high-dimensional, multi-stage convex relaxation, zero-norm regularized LS.
\par
\par


\def\thefigure{\arabic{figure}}
\def\thetable{\arabic{table}}

\renewcommand{\theequation}{\thesection.\arabic{equation}}

\fontsize{12}{14pt plus.8pt minus .6pt}\selectfont

\setcounter{section}{1} 
\setcounter{equation}{0} 

\noindent {\bf 1. Introduction}
 \par
 Over the past decade or so, high-dimensional regression is found to
 have wide applications in various fields such as genomics, finance,
 image processing, climate science, sensor network, and so on.
 The canonical high-dimensional linear regression model assumes that
 the number of available predictors $p$ is larger than the sample
 size $n$, although the number of true relevant predictors $s$
 is much less than $p$. This model can be expressed as
 \begin{equation}\label{observation}
   y=X\beta^*+\varepsilon
 \end{equation}
 where $y=(y_1,\ldots,y_n)^{\mathbb{T}}$ is the vector of responses,
 $X=(x_{ij})$ is the $n\times p$ matrix of covariates, $\beta^*\in\mathbb{R}^p$
 is a sparse coefficient vector with $s$ nonzero entries, and
 $\varepsilon=(\varepsilon_1,\ldots,\varepsilon_n)^{\mathbb{T}}$
 is the noise vector. Unless otherwise states, we assume that
 all covariates are centered so that the intercept term is not included
 in \eqref{observation} and the matrix $X$ of covariates has normalized columns.

 The current popular high-dimensional regression methods include
 convex type estimators such as Lasso in \cite{LASSO94}, adaptive
 Lasso in \cite{Zou06}, elastic net in \cite{elasticnet05} and Dantzig selector
 in \cite{Dantzig01}; and nonconvex type estimators such as SCAD in \cite{FanLi01}
 and MCP in \cite{Zhang10}. The reader may refer to the article of \cite{FanLi10}
 and the monograph of \cite{Bhlmann11} for an excellent overview of these methods.
 They are to some extent imitating the performance of the zero-norm penalized LS estimator
 \begin{equation}\label{znorm}
   \beta^{\rm zn}\in\mathop{\arg\min}_{\|\beta\|_{\infty}\le R}
   \,\left\{\frac{1}{2n\lambda}\|y-X\beta\|^2+\|\beta\|_0\right\}
 \end{equation}
 where the ball constraint $\|\beta\|_{\infty}\le R$ for some $R>0$
 ensures the well-definedness of $\beta^{\rm zn}$, and $\lambda>0$ is
 the regularization parameter. Recently, by developing a global exact
 penalty for the equivalent mathematical program with equilibrium constraints
 (MPEC), \cite{BiPan17} showed that a global optimal solution of \eqref{znorm}
 can be obtained from the solution of a global exact penalization problem,
 and the popular SCAD estimator is the product yielded by eliminating the dual
 part of a global exact penalization problem. By solving the global exact
 penalization problem in an alternating way, they proposed a multi-stage
 convex relaxation approach (GEP-MSCRA), which can be regarded as
 an adaptive Lasso embedded with the dual information.
 We notice that for the clean design matrix $X$, the zero-norm regularized
 LS estimator computed with GEP-MSCRA has a remarkable advantage over Lasso
 in reducing the prediction error and capturing sparsity.

 In many applications, we often face corrupted data due to inaccurate
 observations for covariates or missing values. Common examples include
 sensor network data (see \cite{Slijepcevic02}), high-throughout sequencing
 (see \cite{Benjamini12}), and gene expression data (see \cite{Purdom05}).
 In this setting, if the high-dimensional regression method for clean data
 is naively applied to the corrupted data, one will obtain misleading inference
 results, see \cite{Rosenbaum10}. Then, it is natural to ask how to modify
  the zero-norm regularized LS estimator so that it can still display
  its strong points for the corrupted data. Motivated by CoCoLasso
  in \cite{Datta16}, we shall propose a calibrated
  zero-norm regularized LS estimator. For the convenience of discussion,
  we assume that a corrupted covariate matrix $Z=(z_{ij})_{n\times p}$
  instead of the true covariate matrix $X$ is observed. As mentioned in
  \cite{Loh14} and \cite{Datta16}, depending on the specific context, there are various
  ways to model the measurement errors. For example, in the additive noise setting,
  $Z\!=X+A$ where $A=(a_{ij})_{n\times p}$ is the additive noise matrix;
  in the multiplicative errors setup, $Z=X\circ M$ where $M=(m_{ij})_{n\times p}$
  is the matrix of multiplicative errors and ``$\circ$'' denotes the elementwise
  multiplication operator; and missing values can be
  viewed as a special case of multiplicative errors.

  The loss term $\frac{1}{2n}\|y-X\beta\|^2$ in the clean setting can be rewritten as
  \begin{equation}\label{hSigma}
    \frac{1}{2}\beta^{\mathbb{T}}\Sigma\beta-\xi^{\mathbb{T}}\beta+\frac{1}{2n}\|y\|^2
    \ \ {\rm with}\ \Sigma:=\frac{1}{n}X^{\mathbb{T}}X\ {\rm and}\ \xi:=\frac{1}{n}X^{\mathbb{T}}y.
  \end{equation}
  By recalling that the covariates are centered, it is easy to check that
  $(\Sigma,\xi)$ is an unbiased estimator of $(\Sigma_x,\Sigma_x\beta^*)$
  where $\Sigma_x$ denotes the covariance matrix of the covariates.
  With the corrupted $Z$ and $y$, \cite{Loh11} constructed an unbiased
  surrogate $(\widehat{\Sigma},\widehat{\xi})$ of $(\Sigma,\xi)$,
  and obtained an estimation for the true $\beta^*$ via
  the following optimization model
  \begin{equation}\label{CLASSO}
   \widehat{\beta}\in\mathop{\arg\min}_{\|\beta\|_1\le R_0}
   \left\{\frac{1}{2}\beta^{\mathbb{T}}\widehat{\Sigma}\beta
   -\widehat{\xi}^{\mathbb{T}}\beta+\lambda_n\|\beta\|_1 \right\}.
  \end{equation}
  Notice that the unbiased surrogate $\widehat{\Sigma}$ constructed with $Z$
  may not be positive semidefinite; for example, when $x_{ij}$ is corrupted
  by the independent additive errors $a_{ij}$ with mean $0$ and variance $\tau^2$,
  the matrix $\widehat{\Sigma}=\frac{1}{n}Z^{\mathbb{T}}Z-\tau^2 I$ is
  an unbiased surrogate for $\Sigma$ which has a negative eigenvalue due to $n<p$.
  So, the objective function of \eqref{CLASSO} may be nonconvex and lower unbounded.
  Loh and Wainwright introduced the constraint $\|\beta\|_1\le R_0$ in
  the model \eqref{CLASSO} to guarantee that it has an optimal solution.
  Through some careful analysis, they showed that if $R_0$ is properly chosen,
  a projected gradient descent algorithm will converge in polynomial time
  to a small neighborhood of the set of all global minimizers.
  However, as remarked in \cite{Datta16}, the practical performance of
  the nonconvex Lasso model \eqref{CLASSO} depends greatly on the choice of $R_0$.
  Similar shortcoming also appears in the procedure of \cite{Chen13}.

  To overcome the shortcoming of \eqref{CLASSO} and enjoy
  the convex formulation of Lasso, \cite{Datta16} recently
  proposed a convex conditioned Lasso (CoCoLasso).
  Let $W\succeq\widehat{\epsilon}I$ mean that $W-\widehat{\epsilon}I$ is
  positive semidefinite (PSD) and $\|Z\|_{\rm max}=\max_{i,j}|z_{ij}|$
  denotes the elementwise maximum norm of a matrix $Z$.
  They first solved the following PSD
  optimization problem
   \begin{equation}\label{Convex1}
   \overline{\Sigma}\in\mathop{\arg\min}_{W\succeq\widehat{\epsilon}I}
   \|W-\widehat{\Sigma}\|_{\rm max}\ \ {\rm for\ some}\ \widehat{\epsilon}>0
  \end{equation}
  to obtain a nearest PD approximation to
  the unbiased surrogate $\widehat{\Sigma}$ of $\Sigma$ constructed as in \cite{Loh11}
  with $Z$, and then define
  \begin{equation}\label{CoCoLasso}
   \overline{\beta}=\mathop{\arg\min}_{\beta\in\mathbb{R}^p}
   \left\{\frac{1}{2n}\|\overline{y}-\overline{Z}\beta\|^2+\lambda\|\beta\|_1\right\}
  \end{equation}
  with the Cholesky factor
  $\overline{Z}/\sqrt{n}$ of $\overline{\Sigma}$ and the vector $\overline{y}$
  satisfying $\overline{Z}^{\mathbb{T}}\overline{y}=Z^{\mathbb{T}}y$.

  The elementwise maximum norm in the model \eqref{Convex1} plays
  a twofold role: measuring the approximation of $\overline{\Sigma}$
  to $\Sigma$ and removing a certain noise involved in $\widehat{\Sigma}$.
  Compared with other elementwise norms such as $\ell_1$-norm and Frobenius norm,
  the maximum norm indeed yields an approximation whose entries are
  closer to those of $\widehat{\Sigma}$. However, the computation of
  $\overline{\Sigma}$ is expensive since the model \eqref{Convex1} is
  a convex program of $p^2$ variables which involves two nonsmooth terms:
  the objective function $\|W-\widehat{\Sigma}\|_{\rm max}$ and
  the PSD constraint. Figure \ref{figZou} below indicates that
  when using the alternating direction method of multipliers (ADMM)
  described in Appendix A of \cite{Datta16} to solve \eqref{Convex1}
  with $\widehat{\Sigma}$ from the data in Subsection 5.1.1,
  the computing time increases quickly with the increase of $p$ or
  the accuracy improvement of solution. Consider that the problem
  \eqref{Convex1} aims at seeking an approximation to the covariance matrix
  $\Sigma_x$ instead of the noisy unbiased surrogate $\widehat{\Sigma}$.
  It is reasonable to seek an approximation which has a little worse
  approximate accuracy but can be cheaply achieved, and then employ a more effective
  high-dimensional regression method than Lasso to define an estimator.
  When the elementwise maximum norm in \eqref{Convex1} is replaced by
  the Frobenius norm, its solution is exactly the projection of
  $\widehat{\Sigma}-\widehat{\epsilon}I$ onto the PSD cone and
  can be obtained from one eigenvalue decomposition for $\widehat{\Sigma}$.
  Also, when $\widehat{\Sigma}=\frac{1}{n}Z^{\mathbb{T}}Z-\tau^2 I$, this solution
  well matches the structure of $\widehat{\Sigma}$.
  Motivated by this, we replace the objective function of \eqref{Convex1}
  with the Frobenius norm of $W-\widehat{\Sigma}$ to obtain an approximation
  $\widetilde{\Sigma}$, and with its eigenvalue decomposition
  define a zero-norm regularized LS estimator.

  \begin{figure}[t]
  \centerline{\epsfig{file=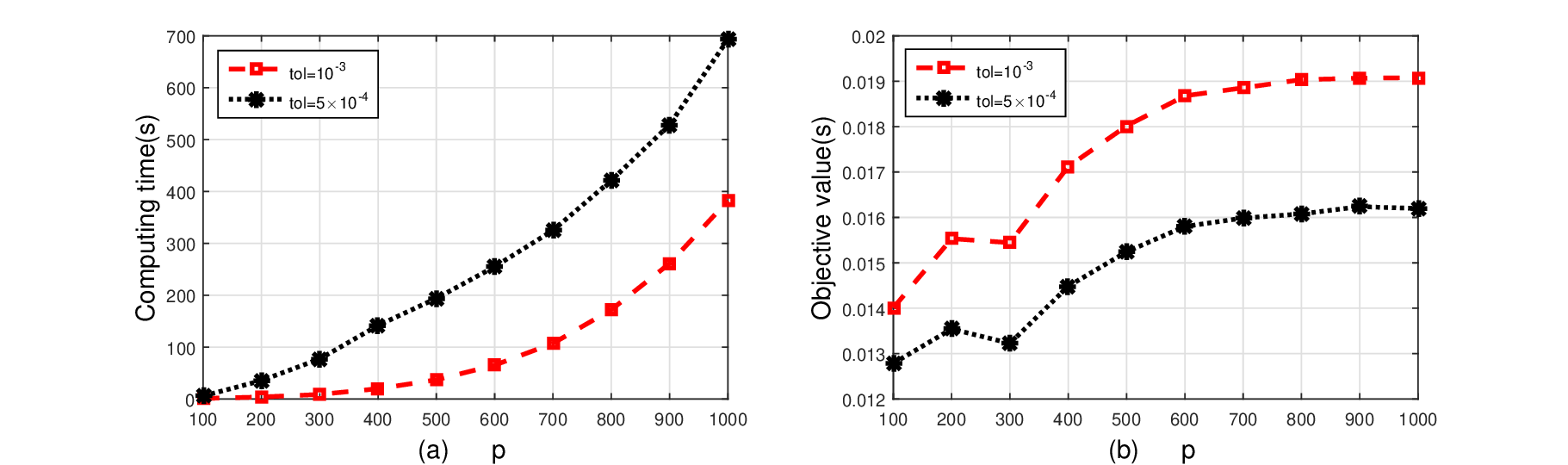,width=4.5in}}
  \par \caption{\footnotesize The computing time and objective value of Algorithm 1 due to \cite{Datta16}
  with the stopping condition
  $\max(\|A_{i+1}\!-A_i\|_F,\|B_{i+1}\!-B_i\|,\|\Lambda_{i+1}\!-\Lambda_i\|)\le{\rm tol}$}
  \label{figZou}
 \end{figure}

 We also notice that a Dantzig selector type estimator and its improved version
  were proposed in \citep{Rosenbaum10,Rosenbaum13} and \cite{Belloni17}, respectively,
  for additive measurement error models. Since these estimators are defined via
  an optimization problem with an D.C. (difference of convexity) constraint,
  it is difficult to obtain these estimators in practice. To overcome
  the difficulty caused by the D.C. constraint, they recently relaxed
  the nonconvex constraint set to a convex set and proposed two conic
  programming based on estimators for the same model setup (see \cite{Belloni16}),
  which can be viewed as a relaxed version of the Dantzig selector
  for the clean data. In addition, \cite{Stadler12} derived an algorithm
  for sparse linear regression with missing data based on a sparse inverse
  covariance matrix estimation. In the same spirit of \cite{Loh11} and \cite{Datta16},
  we propose the CaZnRLS estimator that can handle simultaneously
  additive errors, multiplicative errors and missing data case.
  Although the CaZnRLS estimator is defined by a nonconvex
  optimization problem, the GEP-MSCRA in \cite{BiPan17}
  (see Section \ref{sec3}) provides an efficient solver for it,
  which consists of solving a sequence of weighted $\ell_1$-regularized LS problems.
  As shown by the simulation study in Section \ref{sec5}, the estimator still
  displays its merits in reducing prediction error and capturing sparsity for
  the contaminated data as it does for the clean data.

  The rest of this paper is organized as follows. In Section \ref{sec2},
  we define the calibrated zero-norm regularized LS estimator and provide
  a primal-dual view on this estimator. Section \ref{sec3} describes
  the GEP-MSCRA solver for computing the CaZnRLS estimator. In Section \ref{sec4},
  under a restricted eigenvalue assumption on the matrix $\Sigma$,
  we provide the deterministic theoretical guarantees including
  the $\ell_2$-error bound for every iterate, the decreasing of the error
  bound sequence, and the sign consistency of the iterates after finite steps;
  and the statistical guarantees for the computed estimator under two types
  of measurement errors. In Section \ref{sec5}, we compare the performance
  of CaZnRLS with that of CoCoLasso and NCL.

  To close this section, we introduce some necessary notations.
  Let $\mathbb{S}^p$ be the space consisting of all $p\times p$
  real symmetric matrices, equipped with the trace inner product
  $\langle W,Y\rangle={\rm trace}(W^{\mathbb{T}}Y)$ and its induced
  Frobenius norm $\|\cdot\|_F$, and let $\mathbb{S}_{+}^p$ be the cone
  consisting of all PSD matrices in $\mathbb{S}^p$. For any symmetric
  matrix $W$, let $\lambda_{\rm min}(W)$ and $\lambda_{\rm max}(W)$
  denote the smallest and largest eigenvalues of $W$.
  For any vector $z$, $\|z\|_\infty$ denotes the infinity norm of $z$.
  Let $I$ and $e$ denote an identity matrix and a vector of all ones, respectively,
  whose dimensions are known from the context. For a closed set $\Omega$,
  $\delta_{\Omega}(\cdot)$
  denotes the indicator function on $\Omega$, i.e., $\delta_{\Omega}(x)=0$
  if $x\in \Omega$ and otherwise $\delta_{\Omega}(x)=+\infty$; and when
  $\Omega$ is convex, $\Pi_{\Omega}(\cdot)$ denotes the projection
  operator onto $\Omega$. For an index set $\Lambda\subseteq\{1,\ldots,p\}$,
  write $\Lambda^{c}:=\{1,\ldots,p\}\backslash \Lambda$ and denote by
  $\mathbb{I}_{\Lambda}(\cdot)$ the characterization function on $\Lambda$,
  and $Y_{\Lambda}$ by the submatrix of $Y$ consisting of the column $Y_j$
  with $j\in\Lambda$. For any nonnegative real number $a$, $\lfloor a\rfloor$
  and $\lceil a\rceil $ denote the largest integer less than $a$ and
  the smallest integer greater than $a$, respectively.

\setcounter{equation}{0} 
 \section{Calibrated zero-norm regularized LS estimator}\label{sec2}

  When the data is corrupted by measurement errors, the observed matrix
  $Z$ of predictors is a function of the true covariate matrix $X$ and
  random errors. In this case, one may construct an unbiased surrogate
  $(\widehat{\Sigma},\widehat{\xi})$ for the pair $(\Sigma,\xi)$ with
  $Z$ and $y$ as in \cite{Loh11}. For the specific form of
  $(\widehat{\Sigma},\widehat{\xi})$ under various types of measurement errors,
  one may refer to Section 2 in \cite{Loh11} or see Appendix B.
  Now assume that an unbiased surrogate $(\widehat{\Sigma},\widehat{\xi})$ is available.
  Let $\widehat{\Sigma}$ have the eigenvalue decomposition
  as $\widehat{\Sigma}=P{\rm Diag}(\theta_1,\ldots,\theta_p)P^{\mathbb{T}}$
  where $P$ is a $p\times p$ orthonormal matrix and $\theta_1\ge\theta_2\ge\cdots\ge\theta_p$
  are the eigenvalues of $\widehat{\Sigma}$.

  Since it is time-consuming to compute a solution of \eqref{Convex1}
  when $p$ is large, we replace the elementwise maximum norm
  in \eqref{Convex1} with the Frobenius norm and achieve a nearest PD
  approximation to $\widehat{\Sigma}$ via the following model
  \begin{equation}\label{corr-prob}
   \widetilde{\Sigma}=\mathop{\arg\min}_{W\succeq\widehat{\epsilon}I}\|W-\widehat{\Sigma}\|_F.
  \end{equation}
  Note that \eqref{corr-prob} has the same solution set as
  $\min_{W\succeq\widehat{\epsilon} I}\|W-\widehat{\Sigma}\|_F^2$ does. So,
  \begin{equation}\label{Decom}
  \widetilde{\Sigma}=\widehat{\epsilon}I+\Pi_{\mathbb{S}_{+}^p}(\widehat{\Sigma}\!-\!\widehat{\epsilon}I)
   =P{\rm Diag}\big(\max(\theta_1,\widehat{\epsilon}),\ldots,\max(\theta_p,\widehat{\epsilon})\big)P^{\mathbb{T}}.
  \end{equation}
  Clearly, when $\widehat{\Sigma}=\frac{1}{n}Z^{\mathbb{T}}Z-\tau^2I$,
  a composite of a low-rank matrix and an identity matrix,
  the solution $\widetilde{\Sigma}$ keeps this structure.
  Also, one eigenvalue decomposition of $\frac{1}{n}Z^{\mathbb{T}}Z$
  is enough to formulate the solution $\widetilde{\Sigma}$. Indeed, let
  \begin{equation}\label{WSigma-y}
    \left\{\begin{array}{ll}
    \widetilde{Z}:=\sqrt{n}P{\rm Diag}\Big(\sqrt{\max(\theta_1,\widehat{\epsilon})},\ldots,\sqrt{\max(\theta_p,\widehat{\epsilon})}\Big)P^{\mathbb{T}},\\
    \widetilde{y}:=\sqrt{n}P{\rm Diag}\Big(\frac{1}{\sqrt{\max(\theta_1,\widehat{\epsilon})}},\ldots,\frac{1}{\sqrt{\max(\theta_p,\widehat{\epsilon})}}\Big)P^{\mathbb{T}}\widehat{\xi}.
    \end{array}\right.
  \end{equation}
  By invoking \eqref{Decom}, one may check that
  $\widetilde{\Sigma}=\frac{1}{n}\widetilde{Z}^{\mathbb{T}}\widetilde{Z}$
  and $\widehat{\xi}=\frac{1}{n}\widetilde{Z}^{\mathbb{T}}\widetilde{y}$.

  Although the computation of $\widetilde{\Sigma}$ becomes much
  cheaper than that of $\overline{\Sigma}$, it is a worse approximation
  to $\widehat{\Sigma}$ than the latter since the minimization
  of the elementwise maximum norm tends to give smaller entries.
  This requires us to define an estimator by more effective high-dimensional
  regression methods than Lasso. A natural candidate is the nonconvex type
  estimator such as SCAD and MCP since they can remove the bias of Lasso.
  Note that SCAD and MCP functions are actually imitating the performance
  of zero-norm. We define the zero-norm regularized LS estimator
  \begin{equation}\label{Beta-sparse}
   \widetilde{\beta}\in\mathop{\arg\min}_{\beta\in\mathbb{R}^p}
   \Big\{\frac{1}{2n\lambda}\|\widetilde{Z}\beta-\widetilde{y}\|^2+\|\beta\|_{0}\Big\}.
  \end{equation}
  Taking into account that $(\widetilde{Z},\widetilde{y})$ is a calibrated
  pair of $(\widehat{\Sigma},\widehat{\xi})$, we call \eqref{Beta-sparse}
  a calibrated version of the zero-norm regularized LS estimator
  defined with the corrupted observation $Z$ as in \eqref{znorm},
  except that the ball constraint is now removed due to the coerciveness
  of the strong convex $\|\widetilde{Z}\beta-\widetilde{y}\|^2$.
  Compared with the SCAD estimator, the solution of \eqref{Beta-sparse}
  seems to be much more difficult since the problem \eqref{Beta-sparse} is
  even discontinuous due to the combinatorial property of zero-norm.
  However, as demonstrated later, the SCAD estimator is actually
  equivalent to the zero-norm regularized LS.

  Next we shall provide a primal-dual look into the estimator $\widetilde{\beta}$.
  Define
  \begin{equation}\label{phi}
    \phi(t):=\frac{a\!-\!1}{a\!+\!1}t^2+\frac{2}{a\!+\!1}t\ \ (a>1)\quad\ {\rm for}\ \ t\in\mathbb{R}.
  \end{equation}
  With this function, it is immediate to check that for any $\beta\in\mathbb{R}^p$,
  \begin{equation*}\label{vc-zn}
   \|\beta\|_0=\min_{w\in\mathbb{R}^p}\!\bigg\{\sum_{i=1}^p\phi(w_i)\!: \langle e-w,|\beta|\rangle=0,
   \ 0\le w\le e\bigg\}.
 \end{equation*}
 This shows that the zero-norm is essentially an optimal value function
 of a parameterized MPEC, since $\langle e-w,|\beta|\rangle=0$ and $e-w\ge 0$
 constitute an equilibrium constraint. Thus, \eqref{Beta-sparse} is
 equivalent to the following MPEC
 \begin{equation}\label{vc-sparse}
   \min_{\beta,w\in\mathbb{R}^p}\!\bigg\{\frac{1}{2n\lambda}\|\widetilde{Z}\beta-\widetilde{y}\|^2
   +\sum_{i=1}^p\phi(w_i)\!:\, \langle e-w,|\beta|\rangle=0,\,0\le w\le e\bigg\},
 \end{equation}
 in the sense that if $\widetilde{\beta}^{\natural}$ is a global optimal solution of \eqref{Beta-sparse},
 then $(\widetilde{\beta}^{\natural},{\rm sign}(|\widetilde{\beta}^{\natural}|))$
 is globally optimal to \eqref{vc-sparse};
 and conversely, if $(\widetilde{\beta}^{\natural},\widetilde{w}^{\natural})$
 is a global optimal solution of \eqref{vc-sparse}, then $\widetilde{\beta}^{\natural}$
 is globally optimal to \eqref{Beta-sparse} with
 $\|\widetilde{\beta}^{\natural}\|_{0}=\sum_{i=1}^p\phi(\widetilde{\beta}^{\natural}_i)$.

 The MPEC form \eqref{vc-sparse} shows that the difficulty
 to compute the estimator $\widetilde{\beta}$ arises from
 the constraint $\langle e-\!w,|\beta|\rangle=0$, which brings
 the bothersome nonconvexity. Since it is much harder to handle
 nonconvex constraints than to handle nonconvex objective,
 we consider its penalized version
 \begin{equation}\label{pvc-sparse}
   \min_{\beta\in\mathbb{R}^p,w\in[0,e]}\!\bigg\{\frac{1}{2n\lambda}\|\widetilde{Z}\beta-\widetilde{y}\|^2
   +\sum_{i=1}^p\phi(w_i) +\rho\langle e-w,|\beta|\rangle\bigg\}
 \end{equation}
 where $\rho>0$ is the penalty parameter. By the coerciveness of
 the function $\beta\mapsto\|\widetilde{Z}\beta-\widetilde{y}\|^2$,
 there exists a constant $\widehat{R}>0$ such that \eqref{vc-sparse} and \eqref{pvc-sparse}
 are equivalent to their respective version in which the variable $\beta$
 is required to lie in the set $\{\beta\in\mathbb{R}^p\ |\ \|\beta\|_{\infty}\le\widehat{R}\}$.
 Thus, by invoking Theorem 2.1 of \cite{BiPan17}, we have the following result.
 \begin{theorem}\label{exact-penalty}
  Let $L_{\!f}$ be the Lipschitz constant of
  $f(\beta):=\frac{1}{2n}\|\widetilde{Z}\beta-\widetilde{y}\|^2$
  on the ball $\{\beta\in\mathbb{R}^p\!:\|\beta\|_{\infty}\le\widehat{R}\}$. Then,
  for every $\rho\ge\overline{\rho}:=\frac{4aL_{\!f}}{(a+1)\lambda}$,
  the global optimal solution set of \eqref{pvc-sparse} associated to
  $\rho$ coincides with that of \eqref{vc-sparse}.
 \end{theorem}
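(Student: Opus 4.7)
The plan is to reduce this exact-penalty assertion to a direct application of \cite[Theorem 2.1]{BiPan17}, so the proposal has two stages: justify the reduction to a compact ball, then match the threshold formula.

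First I would establish that restricting to the ball $\{\beta\in\mathbb{R}^p:\|\beta\|_\infty\le R\}$ is lossless for both problems. Since $\widetilde{\Sigma}\succeq\widehat{\epsilon}I$ with $\widehat{\epsilon}>0$ by construction, the map $\beta\mapsto\frac{1}{2n}\|\widetilde{Z}\beta-\widetilde{y}\|^2$ is strongly convex, hence coercive. Combined with the boundedness of the $w$-variable in $[0,e]$ and the nonnegativity of $\phi$ on $[0,1]$, this makes the objectives of \eqref{vc-sparse} and \eqref{pvc-sparse} coercive in $\beta$ uniformly in $w$, so all global minimizers sit inside some $\ell_\infty$-ball of radius $R$. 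After adjoining $\|\beta\|_\infty\le R$ to both problems, we are exactly in the setting required by the global exact penalty theorem of \cite{BiPan17}: a smooth loss $f(\beta)=\frac{1}{2n}\|\widetilde{Z}\beta-\widetilde{y}\|^2$ coupled with the separable parametric penalty $\sum_i\phi(w_i)$ and the complementarity system $\langle e-w,|\beta|\rangle=0$, $0\le w\le e$.

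Second, I would match the constants. On $\{\|\beta\|_\infty\le R\}$ the quadratic $f$ is Lipschitz with some modulus $L_f$, and the loss enters the objective of \eqref{vc-sparse} with coefficient $1/\lambda$. The general formula of \cite[Theorem 2.1]{BiPan17} expresses the exact-penalty threshold as a product of $L_f/\lambda$ and a shape constant determined by the derivative of $\phi$ at the right end of $[0,1]$. A direct computation with the explicit $\phi$ from \eqref{phi} gives $\phi'(1)=\frac{2(a-1)}{a+1}+\frac{2}{a+1}=\frac{2a}{a+1}$, and plugging this into the threshold formula of \cite{BiPan17} returns exactly $\overline{\rho}=\frac{4aL_f}{(a+1)\lambda}$. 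Invoking the theorem then yields that the global optimal solution sets of \eqref{vc-sparse} and \eqref{pvc-sparse} coincide for every $\rho\ge\overline{\rho}$.

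The only real obstacle is bookkeeping: making sure the triple $(f,\phi,\lambda)$ of the present paper is identified with the corresponding data in the abstract framework of \cite[Theorem 2.1]{BiPan17} and checking that all hypotheses (smoothness of $f$, the shape conditions $\phi(0)=0$, $\phi(1)=1$, convexity of $\phi$ on $[0,1]$, and the complementarity structure) are in force. The substantive exact-penalty argument, which combines the KKT analysis of the inner minimization in $w$ with a Lipschitzian error bound for the equilibrium constraint, is already carried out in \cite{BiPan17} and does not need to be re-derived here.
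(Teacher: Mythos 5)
Your proposal follows essentially the same route as the paper: the paper likewise notes that strong convexity of $\beta\mapsto\|\widetilde{Z}\beta-\widetilde{y}\|^2$ (from $\widetilde{\Sigma}\succeq\widehat{\epsilon}I$) makes \eqref{vc-sparse} and \eqref{pvc-sparse} equivalent to their ball-constrained versions, and then invokes \cite[Theorem 2.1]{BiPan17} directly, offering no further argument. Your additional bookkeeping (coercivity uniform in $w$ and $\rho$, and the computation $\phi'(1)=\frac{2a}{a+1}$ matching the threshold $\overline{\rho}=\frac{4aL_{\!f}}{(a+1)\lambda}$) is correct and in fact slightly more explicit than what the paper records.
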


 Theorem \ref{exact-penalty} shows that the problem \eqref{pvc-sparse}
 is a global exact penalty of \eqref{vc-sparse} in the sense that it has
 the same global optimal solution set as \eqref{vc-sparse} does once $\rho$
 is greater than a threshold. Consequently, the estimator $\widetilde{\beta}$
 can be achieved by solving the following exact penalty problem with $\rho>\overline{\rho}$:
 \begin{equation}\label{pvc-sparse1}
 \widetilde{\beta}\in\mathop{\arg\min}_{\beta\in\mathbb{R}^p,w\in[0,e]}
  \!\bigg\{\frac{1}{2n}\|\widetilde{Z}\beta-\widetilde{y}\|^2
   +\sum_{i=1}^p\lambda\Big[\phi(w_i) +\rho(1-w_i)|\beta_i|\Big]\bigg\}.
 \end{equation}
 Compared with \eqref{Beta-sparse}, the problem \eqref{pvc-sparse1}
 involves an additional variable $w\in\mathbb{R}^p$ which provides a part of
 dual information on \eqref{Beta-sparse}. Hence, \eqref{pvc-sparse1}
 can be viewed as a primal-dual equivalent form of \eqref{Beta-sparse}.
 This form does not involve the combinatorial difficulty, and its nonconvexity
 is just due to the coupled term $\langle w,|\beta|\rangle$ which is clearly
 much easier to cope with. In particular, the SCAD function in \cite{FanLi01} is precisely
 the optimal value of the inner minimization in \eqref{pvc-sparse1} w.r.t. $w$.
 To see this, we define
 \begin{equation}\label{phi-psi}
  \psi(t):=\!\left\{\!\begin{array}{cl}
                  \phi(t) &\textrm{if}\ t\in [0,1],\\
                   +\infty & \textrm{otherwise}.
                 \end{array}\right.
 \end{equation}
 Recalling the conjugate
 $\psi^*(\omega)=\sup_{t\in\mathbb{R}}\{t\omega-\psi(t)\}$ of $\psi$ by \cite{Roc70},
 we can compactly write the inner minimization in \eqref{pvc-sparse1} w.r.t. $w$ as
 \begin{equation*}
  \min_{w\in\mathbb{R}^p}\!\Big\{{\textstyle\sum_{i=1}^p}\lambda\big[\psi(w_i) +\rho(1\!-\!w_i)|\beta_i|\big]\Big\}
  ={\textstyle\sum_{i=1}^p}\lambda\big[\rho|\beta_i|-\psi^*\big(\rho|\beta_i|\big)\big].
 \end{equation*}
 After an elementary calculation, the conjugate $\psi^*$ of $\psi$ has the form of
 \begin{equation*}
    \psi^*(\omega)=\left\{\!\begin{array}{cl}
                      0 & \textrm{if}\ \omega\leq \frac{2}{a+1},\\
                       \frac{((a+1)\omega-2)^2}{4(a^2-1)}& \textrm{if}\ \frac{2}{a+1}<\omega\le \frac{2a}{a+1},\\
                      \omega-1 & \textrm{if}\ \omega>\frac{2a}{a+1}.
                \end{array}\right.
 \end{equation*}
  By comparing with the expression of the SCAD function $p_{\gamma}(t)$, the function
 $\lambda[\rho|t|-\psi^*(\rho|t|)]$ with $\lambda=\frac{(a+1)\gamma^2}{2}$
 and $\rho=\frac{2}{(a+1)\gamma}$ reduces to $p_{\gamma}(t)$. Thus,
 \begin{equation}\label{SCAD-prob}
   \widetilde{\beta}\in\mathop{\arg\min}_{\beta\in\mathbb{R}^p}
   \!\Big\{\frac{1}{2n}\|\widetilde{Z}\beta-\widetilde{y}\|^2
   +{\textstyle\sum_{i=1}^p}p_{\gamma}(|\beta_i|)\Big\}.
 \end{equation}


\setcounter{equation}{0} 
\section{GEP-MSCRA for computing the estimator $\widetilde{\beta}$}\label{sec3}

 From the last section, to compute the estimator $\widetilde{\beta}$,
 one only needs to solve a single penalty problem \eqref{pvc-sparse1}
 which is much easier than \eqref{Beta-sparse} since
 its nonconvexity is from the coupled term $\langle w,|\beta|\rangle$.
 The GEP-MSCRA proposed in \cite{BiPan17} makes good use of the coupled
 structure and solves the problem \eqref{pvc-sparse1} in an alternating way.
 Since the threshold $\overline{\rho}$ is unknown though one may
 obtain an upper estimation for it, a varying $\rho$ is introduced in GEP-MSCRA.
 The iterations of GEP-MSCRA are described below.
 \begin{algorithm}[!h]
 \caption{\label{Alg}{\bf\ \ GEP-MSCRA for computing $\widetilde{\beta}$}}
 \textbf{Initialization:} Choose $\lambda>0,\rho_0=1$ and an initial $w^0\in[0,\frac{1}{2}e]$.
                          Set $k:=1$.\\
 \textbf{while} the stopping conditions are not satisfied \textbf{do}
 \begin{enumerate}
  \item  Compute the following minimization problem
         \begin{equation}\label{expm-subx}
          \beta^k=\mathop{\arg\min}_{\beta\in\mathbb{R}^p}
           \bigg\{\frac{1}{2n}\big\|\widetilde{Z}\beta-\widetilde{y}\big\|^2
                 +\lambda\sum_{i=1}^p(1\!-\!w_i^{k-1})|\beta_{i}|\bigg\}.
         \end{equation}

  \item When $k=1$, select a suitable $\rho_1\ge\rho_0$ in terms of $\|\beta^1\|_\infty$.
        Otherwise, select $\rho_k$ such that $\rho_k\ge\rho_{k-1}$
        for $k\le 3$; and $\rho_k=\rho_{k-1}$ for $k>3$.

  \item Seek the unique optimal solution $w_i^k\ (i=1,\ldots,p)$ of the problem
        \begin{equation}\label{expm-subw}
         w_i^k=\mathop{\rm arg\min}_{0\le w_i\le 1}
              \left\{\phi(w_i)-\rho_k w_i|\beta^k_{i}|\right\}.
        \end{equation}

  \item Let $k\leftarrow k+1$, and then go to Step 1.
  \end{enumerate}
  \textbf{end while}
\end{algorithm}
\begin{remark}\label{remark-Alg}
 {\bf(a)} Since $\phi$ is strongly convex, the problem \eqref{expm-subw}
 has a unique optimal solution. By the expression of $\phi$,
 it is immediate to obtain
 \begin{equation}\label{wik}
   w_i^k=\min\Big[1,\max\Big(\frac{(a+1)\rho_k|\beta^k_{i}|-2}{2(a-1)},0\Big)\Big]
   \ \ {\rm for}\ \ i=1,2,\ldots,p.
 \end{equation}
 Thus, the main computation work of GEP-MSCRA in each step
 is solving a weighted $\ell_1$-regularized LS. In this sense,
 GEP-MSCRA is analogous to the local linear approximation
 algorithm \cite{Zou08} applied to the problem \eqref{SCAD-prob} except
 the start-up and the weights, where the start-up of the former depends
 explicitly on the dual variable $w^0$, while that of the latter
 depends implicitly on a good estimator $\beta^0$. So,
 when computing CaZnRLS with GEP-MSCRA, one actually obtains
 an adaptive Lasso estimator. The initial $w^0$ may be
 an arbitrary vector from the box set $[0,\frac{1}{2}e]$. Here,
 we restrict $w^0$ to the box set $[0,\frac{1}{2}e]$, rather than
 the feasible set $[0,e]$ of $w$ in \eqref{pvc-sparse1}, so as to achieve
 a better initial estimator $\beta^{1}$.

 \noindent
 {\bf(b)} Due to the combinatorial property of $\|\cdot\|_0$,
 it is almost impossible to get $\widetilde{\beta}$ exactly.
 The popular Lasso of \cite{LASSO94} or adaptive Lasso of \cite{Zou06},
 as a one-step or series of convex relaxation to \eqref{Beta-sparse},
 arises from the primal angle, while the series of weighted
 $\ell_1$-norm regularized LS problems in GEP-MSCRA arise from
 the primal-dual reformulation of \eqref{Beta-sparse}.

 \noindent
 {\bf(c)} From the formula \eqref{wik}, if $\rho_k|\beta^k_{i}|$ is larger,
 then $w_i^k$ has a value close to $1$, which means that in the $(k\!+\!1)$th iterate,
 a smaller weight $(1\!-\!w_i^{k})$ is imposed on the variable $\beta_i$,
 and consequently a conservative strategy is used for sparsity. Consider that
 for some difficult problems, the solution $\beta^1$ yielded by
 the $\ell_1$-regularized LS problem may not have a sharp gap
 between its nonzero and zero entries. Hence, in order to guarantee
 that the subsequent $\beta^k$ has a correct sparse support,
 we increase $\rho_k$ for $k\le 3$ appropriately, i.e., cut down
 the smaller nonzero entries conservatively, while for $k>3$ since
 $\beta^k$ generally has a big difference between its nonzero and zero entries,
 we keep $\rho_k$ unchanged so as to cut down the smaller nonzero entries quickly.
 \end{remark}

 In Appendix C, we provide the implementation details of GEP-MSCRA
 with the semismooth Newton augmented Lagrangian method (ALM) applied to
 the dual of \eqref{expm-subx}. As discussed in \cite{LiSunToh16},
 the semismooth Newton ALM fully exploits the second-order information of
 and the good structure of its dual and can yield a solution of high accuracy.

 \section{Theoretical guarantees for the GEP-MSCRA}\label{sec4}

 In this section we denote by $S^{*}$ the support of the true vector $\beta^{*}$, and define
 \[
  \mathcal {C}(S^{*}):=\bigcup_{S\supset S^{*},|S|\le 1.5s}\!\Big\{\beta\in \mathbb{R}^p\!:
  \|\beta_{S^c}\|_1\le 3\|\beta_{S}\|_1\Big\}.
 \]
 We say that $\Sigma$ satisfies the $\kappa$-restricted eigenvalue condition (REC) or
 $X$ satisfies the $\kappa$-restricted strong convexity on $\mathcal {C}(S^{*})$ if
 $\kappa>0$ is such that
 \[
  \beta^{\mathbb{T}}\Sigma\beta=\frac{1}{n}\|X\beta\|^2
   \ge \kappa\|\beta\|^2\quad{\rm for\ all}\ \beta\in\mathcal{C}(S^*).
 \]
 This REC is a little stronger than the one used in \cite{Negahban12}
 for the clean Lasso and in \cite{Datta16} for CoCoLasso since
 $\mathcal{C}(S^*)\supseteq\big\{\beta\in \mathbb{R}^p\!:
  \|\beta_{(S^*)^c}\|_1\le 3\|\beta_{S^*}\|_1\big\}$, and is different from
 the $(L,S^*,N)$-restricted eigenvalue condition introduced in \cite{VanGeer09}.
 We shall provide the deterministic theoretical
 guarantees for GEP-MSCRA under this REC with appropriate $\lambda,\rho_1$
 and $\rho_3$, which include the error bound of every iterate $\beta^k$
 to the true $\beta^{*}$, the decrease analysis of the error sequence,
 and the sign consistency analysis of $\beta^k$ after finite steps.
 All proofs of the main results are included in Appendix A.

 \noindent {\bf 4.1. Error bound sequence and its decrease}\label{sec4.1}

  To achieve the error bound of the iterate $\beta^k$ to
  the true $\beta^*$, we write
  \begin{equation}\label{wtveps}
   D:=\widehat{\Sigma}-\Sigma \ \ {\rm and}\ \
   \widetilde{\varepsilon}:=\widehat{\xi}-\widetilde{\Sigma}\beta^{*}.
  \end{equation}
  The following theorem states a deterministic result for the error bound.
 \begin{theorem}\label{errbound1}
  Suppose $\Sigma$ satisfies the $\kappa$-REC on $\mathcal{C}(S^{*})$
  with $\kappa>24s\|D\|_{\rm max}$. If $\lambda$ and $\rho_3$ are chosen with
  $\lambda\ge 8\|\widetilde{\varepsilon}\|_{\infty}$
  and $\rho_3\le\frac{2(\kappa-24s\|D\|_{\rm max})}{5\sqrt{2}\lambda}$,
  then
 \begin{equation}\label{err-ineq1}
  \|\beta^k-\beta^{*}\|
  \le\frac{5\sqrt{s}\,\lambda}{2(\kappa-24s\|D\|_{\rm max})}\quad\ \forall k\in\mathbb{N}.
 \end{equation}
 \end{theorem}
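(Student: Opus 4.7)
The plan is to establish \eqref{err-ineq1} by induction on $k$, carrying simultaneously the $\ell_2$ bound and the cone membership $\Delta^k:=\beta^k-\beta^{*}\in\mathcal{C}(S^*)$. The starting point is the basic inequality obtained by comparing the objective of \eqref{expm-subx} at $\beta^k$ and $\beta^*$ and using $\widehat{\xi}=\widetilde{\Sigma}\beta^{*}+\widetilde{\varepsilon}$ from \eqref{wtveps}:
\[
\tfrac{1}{2}(\Delta^k)^{\mathbb{T}}\widetilde{\Sigma}\,\Delta^k\;\le\;\widetilde{\varepsilon}^{\mathbb{T}}\Delta^k+\lambda\sum_{i=1}^{p}(1-w^{k-1}_{i})\bigl(|\beta^{*}_i|-|\beta^k_i|\bigr).
\]
Splitting the right-hand side through $|\beta^{*}_i|-|\beta^k_i|\le|\Delta^k_i|$ on $S^*$, $|\beta^{*}_i|-|\beta^k_i|=-|\Delta^k_i|$ on $(S^*)^c$, $(1-w^{k-1}_{i})\le 1$, and H\"older's inequality $|\widetilde{\varepsilon}^{\mathbb{T}}\Delta^k|\le\|\widetilde{\varepsilon}\|_\infty\|\Delta^k\|_1\le\tfrac{\lambda}{8}\|\Delta^k\|_1$, produces a working inequality that couples $\|\Delta^k_{S^*}\|_1$, $\|\Delta^k_{(S^*)^c}\|_1$, and the weights $\{w^{k-1}_{i}\}_{i\notin S^*}$.

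Next, I will lower bound the quadratic form $\Delta^{\mathbb{T}}\widetilde{\Sigma}\Delta$ on the cone. Formula \eqref{Decom} yields $\widetilde{\Sigma}-\widehat{\Sigma}=P\,\mathrm{Diag}(\max(\widehat{\epsilon}-\theta_i,0))P^{\mathbb{T}}\succeq 0$, so $\widetilde{\Sigma}\succeq\widehat{\Sigma}=\Sigma+D$. For $\Delta\in\mathcal{C}(S^*)$ there exists $S\supset S^*$ with $|S|\le 1.5s$ and $\|\Delta_{S^c}\|_1\le 3\|\Delta_S\|_1$, so $\|\Delta\|_1\le 4\sqrt{1.5s}\,\|\Delta\|$ and hence $\|\Delta\|_1^2\le 24s\|\Delta\|^2$. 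Combining the REC $\Delta^{\mathbb{T}}\Sigma\Delta\ge\kappa\|\Delta\|^2$ with the elementary max-norm bound $|\Delta^{\mathbb{T}}D\Delta|\le\|D\|_\infty\|\Delta\|_1^2$ delivers
\[
\Delta^{\mathbb{T}}\widetilde{\Sigma}\Delta\;\ge\;(\kappa-24s\|D\|_\infty)\|\Delta\|^2\quad{\rm for\ all\ }\Delta\in\mathcal{C}(S^*).
\]

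The decisive step is the induction itself. For $k=1$, the hypothesis $w^0\in[0,\tfrac{1}{2}e]$ gives $1-w^0_i\in[\tfrac{1}{2},1]$, and a direct calculation on the working inequality immediately yields $\|\Delta^1_{(S^*)^c}\|_1\le 3\|\Delta^1_{S^*}\|_1$, so $\Delta^1\in\mathcal{C}(S^*)$ with $S=S^*$. For $k\ge 2$, I would isolate the ``active'' index set $T_{k-1}:=\{i\notin S^*:w^{k-1}_{i}>0\}$ outside the true support; by formula \eqref{wik}, each $i\in T_{k-1}$ satisfies $|\beta^{k-1}_i|>\tfrac{2}{(a+1)\rho_{k-1}}$, so $|T_{k-1}|$ is controlled by $\rho_{k-1}^2\|\Delta^{k-1}_{(S^*)^c}\|^2$, and the calibration $\rho_3\le\tfrac{2(\kappa-24s\|D\|_\infty)}{5\sqrt{2}\lambda}$ combined with the inductive $\ell_2$ bound is tuned exactly so that $|T_{k-1}|\le s/2$, keeping $|S^*\cup T_{k-1}|\le 1.5s$; on this extended support the working inequality then forces $\|\Delta^k_{(S^*\cup T_{k-1})^c}\|_1\le 3\|\Delta^k_{S^*\cup T_{k-1}}\|_1$, giving $\Delta^k\in\mathcal{C}(S^*)$. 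Feeding the quadratic lower bound into the working inequality produces a quadratic inequality in $\|\Delta^k\|$ whose resolution yields the claimed constant $\tfrac{5\sqrt{s}\,\lambda}{2(\kappa-24s\|D\|_\infty)}$. The main obstacle is the inductive step for $k\ge 2$: the weights $w^{k-1}$ mix support information in a delicate way, and the bookkeeping must be sharp enough to keep $|T_{k-1}|\le s/2$ while simultaneously propagating both halves of the inductive hypothesis with the precise constants $8$ and $5\sqrt{2}$ that appear in the hypotheses on $\lambda$ and $\rho_3$.
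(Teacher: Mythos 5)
Your overall strategy coincides with the paper's: start from the basic optimality inequality for \eqref{expm-subx}, transfer the restricted eigenvalue property from $\Sigma$ to $\widetilde{\Sigma}$ through $\widetilde{\Sigma}\succeq\widehat{\Sigma}=\Sigma+D$ and $|\Delta^{\mathbb{T}}D\Delta|\le\|D\|_\infty\|\Delta\|_1^2\le 24s\|D\|_\infty\|\Delta\|^2$ on the cone, and run an induction that keeps the cardinality of an augmented support set below $1.5s$ so that $\Delta^k\in\mathcal{C}(S^*)$. The base case $k=1$, the quadratic lower bound, and the final resolution of the constant are all consistent with the paper's Lemmas~\ref{relation}--\ref{err-lemma} and the proof of Theorem~\ref{errbound1}.

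The gap is in the inductive bookkeeping, precisely at the point you yourself flag as the main obstacle. You augment $S^*$ by $T_{k-1}=\{i\notin S^*: w_i^{k-1}>0\}$; by \eqref{wik}, membership in this set only forces $|\beta_i^{k-1}|>\frac{2}{(a+1)\rho_{k-1}}$, so the cardinality bound you obtain is $|T_{k-1}|\le\frac{(a+1)^2}{4}\rho_{k-1}^2\|\Delta^{k-1}_{(S^*)^c}\|^2$. Feeding in $\rho_{k-1}\le\rho_3\le\frac{2(\kappa-24s\|D\|_\infty)}{5\sqrt{2}\lambda}$ and the inductive bound $\|\Delta^{k-1}\|\le\frac{5\sqrt{s}\,\lambda}{2(\kappa-24s\|D\|_\infty)}$ gives only $|T_{k-1}|\le\frac{(a+1)^2}{8}s$, which exceeds $s/2$ for every $a>1$ (for the paper's choice $a=6$ it is $6.125s$), so $|S^*\cup T_{k-1}|\le 1.5s$ fails and the REC on $\mathcal{C}(S^*)$ cannot be invoked; the calibration of $\rho_3$ is not tuned for this threshold. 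The paper instead uses $S^{k-1}=S^*\cup\{i\notin S^*: w_i^{k-1}>\frac{1}{2}\}$: by \eqref{wik}, $w_i^{k-1}>\frac{1}{2}$ is equivalent to $\rho_{k-1}|\beta_i^{k-1}|>1$ independently of $a$, whence $|S^{k-1}\setminus S^*|\le\rho_{k-1}^2\|\Delta^{k-1}\|^2\le 0.5s$ by exactly your arithmetic; and outside $S^{k-1}$ one still has $1-w_i^{k-1}\ge\frac{1}{2}$, which together with $\lambda\ge 8\|\widetilde{\varepsilon}\|_\infty$ yields the cone inequality $\|\Delta^k_{(S^{k-1})^c}\|_1\le 3\|\Delta^k_{S^{k-1}}\|_1$. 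With the stated constants, $\frac{1}{2}$ is the unique threshold satisfying both requirements at once, so your argument is repaired by this single substitution and then closes as in the paper.
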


 The error bound in Theorem \ref{errbound1} has the same order,
 i.e. $O(\lambda\sqrt{s})$, as established for the clean Lasso in \cite{Negahban12}.
 From the proof of Theorem 1 in \cite{Datta16},
 $\|D\|_{\rm max}\le\frac{\kappa}{64s}$ holds with a high probability.
 This means that there is a high probability for the error bound of $\beta^k$
 not greater than $\frac{4\lambda\sqrt{s}}{\kappa}$, which is
 a little better than the bound $\frac{4\sqrt{2}\lambda\sqrt{s}}{\kappa}$
 in \cite{Datta16} although $\lambda$ is allowed to be greater than
 $8\|\widetilde{\varepsilon}\|_{\rm \infty}$ instead of $2\|\widetilde{\varepsilon}\|_\infty$
 as in \cite{Datta16}.

 \medskip

 Theorem \ref{errbound1} provides an error bound for every iterate,
 but it does not tell us if the error bound of the current $\beta^k$ is
 better than that of the previous $\beta^{k-1}$. To seek the answer,
 we study the decrease of the error bound sequence by bounding
 $(1\!-\!w^{k}_i)^2$ for $i\in S^*$. Write $F^0:=S^*$ and for $k\in\mathbb{N}$ define
 \begin{equation}\label{FLambdak}
  F^{k}:=\Big\{i\!: \big||\beta_i^k|-|\beta_i^*|\big|\ge(\rho_k)^{-1}\Big\}
  \ {\rm and}\  \Lambda^k:=\Big\{i\!:|\beta_i^*|\le \frac{4a}{(a\!+\!1)\rho_k}\Big\}.
 \end{equation}
 By Lemma \ref{indexk}, $(1\!-w^{k}_i)^2$ for $i\in S^*$ can be controlled by $\max(\mathbb{I}_{\Lambda^k}(i),\mathbb{I}_{F^k}(i))$.
 As a consequence, we have the following error bound result
 involving $\mathbb{I}_{\Lambda^k}(i)$.
 \begin{theorem}\label{errbound2}
  Suppose $\Sigma$ satisfies the $\kappa$-REC on $\mathcal{C}(S^{*})$
  with $\kappa>24s\|D\|_{\rm max}$. If $\lambda$ and
  $\rho_3$ are chosen in the same way as in Theorem \ref{errbound1},
  then
  \begin{align*}
  \big\|\beta^k\!-\beta^{*}\big\|
  &\leq\frac{4+2\sqrt{2}}{\kappa-24s\|D\|_{\rm max}}\big\|\widetilde{\varepsilon}_{\!S^{*}}\big\|
    +\Big(\frac{1}{\sqrt{2}}\Big)^{k-1}\big\|\beta^{1}\!-\beta^{*}\big\|\\
    &\quad+\frac{2\lambda}{\kappa-24s\|D\|_{\rm max}}\sum_{j=1}^{k-1}\!\sqrt{{\textstyle\sum_{i\in S^{*}}}\mathbb{I}_{\Lambda^j}(i)}\Big(\frac{1}{\sqrt{2}}\Big)^{k-1-j}\quad\forall k\in\mathbb{N}.
  \end{align*}
  \end{theorem}

  The error bound in Theorem \ref{errbound2} consists of three parts:
  statistical error $\|\widetilde{\varepsilon}_{\!S^{*}}\|$ induced by noise,
  the identification error
  $\sum_{j=1}^{k-1}\!\sqrt{{\textstyle\sum_{i\in S^{*}}}\mathbb{I}_{\Lambda^j}(i)}(\frac{1}{\sqrt{2}})^{k-1-j}$
  related to the choice of $\rho_{j}$, and the computation error
  $(\frac{1}{\sqrt{2}})^{k-1}\|\beta^{1}\!-\beta^{*}\|$.
  By the definition of $\Lambda^{j}$, if $\rho_{j}$ is chosen
  such that $\rho_{j}>\frac{4a}{(a+1)\min_{i\in S^*}\!|\beta_i^*|}$,
  then the identification error becomes zero, and consequently
  the error bound sequence will decrease to the statistical error
  $\|\widetilde{\varepsilon}_{\!S^{*}}\|$ as $k$ increases. Clearly,
  if $\min_{i\in S^*}\!|\beta_i^*|$ is not too small,
  it is easy to choose such $\rho_{j}$.
  In the next part, we shall provide an explicit choice range
  of $\rho_j$ such that the identification error is zero.
  From Theorem \ref{errbound2}, we also observe that a smaller error
  bound of $\beta^1$ brings a smaller error bound for $\beta^k$
  with $k\ge 2$. The importance of $\beta^1$ also comes from the fact
  that one may use it to estimate the choice range of $\rho_j\ (j\ge 1)$
  since $\|\widetilde{\varepsilon}\|_{\infty}$ is unknown in practice.
  During the implementation of GEP-MSCRA,
  we choose $\rho_1$ according to this strategy.

 \noindent {\bf 4.2. Sign consistency}\label{sec4.2}

 We show that if the smallest nonzero component of $\beta^{*}$ is not so small,
 GEP-MSCRA can deliver $\beta^l$ satisfying ${\rm supp}(\beta^l)={\rm supp}(\beta^{*})$
 within finite steps. To achieve this goal, we need the oracle
 least squares solution:
 \begin{equation}\label{defxls}
  \beta^{{\rm LS}}\in\mathop{\arg\min}_{\beta\in\mathbb{R}^p}\left\{\frac{1}{2n}\|\widetilde{Z}\beta-\widetilde{y}\|^{2}:\
   {\rm supp}(\beta)\subseteq S^{*}\right\}.
 \end{equation}
 Write $\varepsilon^{{\rm LS}}:=\frac{1}{n}\widetilde{Z}^{\mathbb{T}}(\widetilde{y}-\widetilde{Z}\beta^{\rm LS})$.
 Then
 \(
   \varepsilon_{S^*}^{{\rm LS}}=\widetilde{Z}_{\!S^{*}}^\mathbb{T}(\widetilde{Z}\beta^{{\rm LS}}-\widetilde{y})=0.
 \)
 This implies that
 \(
   \beta^{\rm LS}_{S^{*}}-\beta^{*}_{S^{*}}
  =\widetilde{\Sigma}_{\!S^*S^*}^{-1}\big[\frac{1}{n}\widetilde{Z}^{\mathbb{T}}_{\!S^{*}}
   \big(\widetilde{Z}\beta^{\rm LS}-\widetilde{Z}\beta^{*}\big)\big]
  =\widetilde{\Sigma}_{\!S^*S^*}^{-1}\big[\frac{1}{n}\widetilde{Z}^{\mathbb{T}}_{S^{*}}(\widetilde{y}-\widetilde{Z}\beta^{*})\big],
 \)
 and
 \begin{equation}\label{sign-ls}
 \beta^{\rm LS}_{S^{*}}-\beta^{*}_{S^{*}}
  =\widetilde{\Sigma}_{\!S^*S^*}^{-1}\big(\widehat{\xi}_{S^{*}}-\widetilde{\Sigma}_{\!S^{*}S^{*}}\beta^{*}_{S^{*}}\big)
  =\widetilde{\Sigma}_{\!S^*S^*}^{-1}\widetilde{\varepsilon}_{\!S^{*}}:=\widetilde{\varepsilon}^\dag.
 \end{equation}
 Based on this observation for $\beta^{\rm LS}$,
 we can establish the following result.
 \begin{theorem}\label{sign-consistency}
  Suppose $\Sigma$ satisfies the $\kappa$-REC on $\mathcal{C}(S^{*})$
  with $\kappa>24s\|D\|_{\rm max}$.
  Set $\gamma:=\kappa-24s\|D\|_{\rm max}$.
  If $\lambda,\rho_1$ and $\rho_3$ are chosen
  with $\lambda\ge 6\|\varepsilon^{\rm LS}\|_\infty$,
  $\rho_1\!>\!\max\!\big(\frac{4a}{(a+1)\min_{i\in S^*}\!|\beta^{*}_{i}\!|},
  \gamma\lambda^{-1}\|\widetilde{\varepsilon}^\dag\|_{\infty}\big)$
  and $\rho_3\le\!\sqrt{\frac{4\gamma}{9\sqrt{3}\lambda}}$, then for all $k\in\mathbb{N}$
  \begin{equation*}
  \big\|\beta^k-\beta^{\rm{LS}}\big\|
  \le \frac{2.03\rho_{k-1}\lambda}{\gamma}\sqrt{|F^{k-1}|},\,
  \sqrt{|F^{k}|}\le\frac{18.27\sqrt{3}\rho_k\rho_{k-1}\lambda}{(9\sqrt{3}\!-\!4)\gamma}\sqrt{|F^{k-1}|}.
  \end{equation*}
  In particular, when $k\ge \overline{k}$ with
  $\overline{k}=\big\lceil\frac{0.5\ln(s)}{\ln[(9\sqrt{3}\!-\!4)\gamma\lambda^{-1}]
  -\ln[18.27\sqrt{3}(\rho_3)^2]}\big\rceil$, we have
  \[
    \beta^{k}=\beta^{\rm LS}\ \ {\rm and}\ \ {\rm sign}(\beta^{k})={\rm sign}(\beta^{*}).
  \]
 \end{theorem}
 \begin{remark}\label{sign-remark}
 {\bf(a)} Notice that \cite{Datta16} achieved the sign consistency
 of $\overline{\beta}$ under an irrepresentable condition on $\Sigma$
 and the condition $\min_{i\in S^*}\!|\beta^{*}_{i}\!|>\![4\|\Sigma_{S^*S^*}^{-1}\|_{\infty}
  +(\lambda_{\min}(\Sigma_{S^*S^*}))^{-1/2}]\lambda$, where
  $\|A\|_\infty=\max_{i}\sum_{j}|A_{ij}|$ means the matrix $\ell_\infty$-norm.
  Their irrepresentable condition on $\Sigma$ requires that
  $\|\Sigma_{(S^*)^cS^*}\Sigma_{S^*S^*}^{-1}\|_\infty\!\le\overline{\gamma}<1$
  and $\lambda_{\rm min}(\Sigma_{S^*S^*})\ge\!C_{\rm min}$ for some constants
  $\overline{\gamma}>0$ and $C_{\rm min}\!>0$, in which the former
  makes a restriction on the scale of the entries of $\Sigma$
  and the latter is precisely the REC of $\Sigma$ on the set
  $\big\{\beta\in \mathbb{R}^p\!: \beta_{(S^*)^{c}}=0\big\}$.
  We obtain the sign consistency of $\beta^k$ for $k\ge\overline{k}$
  under the $\kappa$-REC of $\Sigma$ on $\mathcal{C}(S^*)$
  with $\kappa>24s\|D\|_{\rm max}$ and
  $\rho_1\!>\!\max\!\big(\frac{4a}{(a+1)\min_{i\in S^*}\!|\beta^{*}_{i}\!|},  \gamma\lambda^{-1}\|\widetilde{\varepsilon}^\dag\|_{\infty}\big)$.
  When $\lambda_{\rm min}(\Sigma_{S^*S^*})$ is large,
  there is a great possibility for our $\kappa$-REC to hold.
  Also, when $\|\Sigma_{(S^*)^cS^*}\Sigma_{S^*S^*}^{-1}\|_{\infty}
  \le\overline{\gamma}$ does not hold, our $\kappa$-REC may hold;
  for example, consider $\Sigma=[1\ 0\ 2; 0\ 1\ 2; 2\ 2\ 9]$ and $S^*=\{1,2\}$.
  In fact, to some extent, our $\kappa$-REC also
  depends on the unbiased surrogate $\widehat{\Sigma}$ of $\Sigma$.
  If $\|\widehat{\Sigma}-\Sigma\|_{\rm max}$ is small, there is a great
  possibility for our $\kappa$-REC to hold. Finally, the condition on
  $\min_{i\in S^*}\!|\beta^{*}_{i}\!|$ used by \cite{Datta16} implies a large
  choice range for our parameter $\rho_1$ whether $\|\Sigma_{S^*S^*}^{-1}\|_{\infty}$
  or $(\lambda_{\min}(\Sigma_{S^*S^*}))^{-1/2}$ is larger
  or $\lambda$ is larger.

  \noindent
  {\bf(b)} Notice that $\rho_3\le\!\sqrt{\frac{4\gamma}{9\sqrt{3}\lambda}}$.
  Together with the definition of $\overline{k}$, we have
  $\ln[(9\sqrt{3}\!-\!4)\gamma\lambda^{-1}]-\ln[18.27\sqrt{3}(\rho_3)^2]\ge\ln(1.4)$,
  which with $s\ge 9$ implies that
  \(
   \overline{k}\le\widehat{k}:=\big\lceil\frac{0.5\ln(s)}{\ln(1.4)}\big\rceil.
  \)
  As one referee pointed out that $\overline{k}$ or $\widehat{k}$ is actually
  unknown since it depends on the sparsity $s$ of $\beta^*$.
  In practice, some prior upper estimation on $s$ is available;
  for example, a rough upper estimation on $s$ is the dimension $p$.
  Thus, one still can obtain a rough upper estimation on $\widehat{k}$.
  In the practical numerical computation,
  one can identify such $\overline{k}$ by monitoring the index change
  of nonzero entries in each iterate.

  \noindent
  {\bf(c)} By Theorem \ref{sign-consistency},
  the choice of $\rho_1$ is crucial for GEP-MSCRA to yield an oracle solution
  whose sign is consistent with that of $\beta^*$ after finite steps.
  As remarked after Theorem \ref{errbound2}, whether such $\rho_1$ is easily
  chosen or not depends on the error bound of $\beta^{1}$.
  From Theorem \ref{sign-consistency} and Theorem \ref{errbound2},
  we conclude that a smaller $\rho_3$ entails a good output of GEP-MSCRA
  in terms of the error bound and sign consistency, and for those problems
  with high noise, a large $\lambda$ is needed, and of course the error bound
  of $\beta^k$ becomes large.
 \end{remark}
  We have established the deterministic theoretical guarantees of GEP-MSCRA
  for computing the calibrated zero-norm regularized LS estimator under suitable
  conditions. From \citep{Raskutti10,Raskutti11}, if $X$ is from
  the $\Sigma_x$-Gaussian ensemble (i.e., $X$ is formed by independently sampling
  each row $X^i\sim N(0,\Sigma_x)$, there exists a constant $\kappa>0$
  (depending on $\Sigma_x$) such that $\Sigma$ satisfies the REC on
  $\mathcal{C}(S^*)$ with probability greater than $1\!-\!c_1\exp(-c_2n)$
  as long as $n>cs\ln p$, where $c,c_1$ and $c_2$ are absolutely positive constants.
  It is natural to ask whether such $\kappa$ satisfies the requirement of
  the above theorems or not. Is there a big possibility to choose
  $\lambda,\rho_1$ and $\rho_3$ as required in the above theorems?
  In Appendix B, we focus on these questions for two specific types
  of errors-in-variables models.
 \section{Numerical experiments}\label{sec5}

  We use simulated datasets to evaluate the performance of the CaZnRLS estimator,
  computed with GEP-MSCRA (see Appendix C for its implementation details),
  and compare its performance with that of CoCoLasso and NCL in terms of
  the number of signs identified correctly (NC) and identified incorrectly (NIC)
  for predictors, and the relative root-mean-square-error (RMSE).
  Let $\beta^{f}$ be the final output of one of three solvers. Define
  \begin{align*}
  {\rm NC}(\beta^{f}):=\!\sum_{i\in S^*}\mathbb{I}\big\{|{\rm sign}(\beta_i^{f})-{\rm sign}(\beta_i^*)|=0\big\},\qquad\qquad\\
   {\rm NIC}(\beta^{f}):=N_{\rm nz}(\beta^f)-{\rm NC}(\beta^{f})\ \ {\rm and}\ \
   {\rm relative\ RMSE}:=\frac{\|\beta^{f}-\beta^{*}\|}{\|\beta^*\|}
  \end{align*}
  where $N_{\rm nz}(\beta^f)\!:=\!\sum_{i=1}^p\mathbb{I}\big\{|\beta_i|>\!10^{-8}\big\}$
  is the number of nonzero entries of $\beta^f$.
  All results are obtained in a desktop computer running on 64-bit Windows
  with an Intel(R) Core(TM) i7-7700 CPU 3.6GHz and 16 GB memory.

  For GEP-MSCRA, we choose $a=6.0$ for $\phi$, $w^0=0$ and $\rho_k$ for $k\le 3$ by
  \[
    \rho_1=\max\Big(1,\frac{5}{3\|\beta^1\|_{\infty}}\Big),\
    \rho_k=\min\Big(2\rho_{k-1},\frac{10^8}{\|\beta^k\|_\infty}\Big)\ {\rm for}\ k=2,3.
  \]
  We terminate GEP-MSCRA at $\beta^k$ once the following condition is satisfied
  \[
    \left\{\begin{array}{c}
    |N_{\rm nz}(\beta^{k-j})-N_{\rm nz}(\beta^{k-j-1})|\le 5,\ j=0,1,2;\\
    \big|\frac{1}{2n}\|\widetilde{Z}\beta^k\!-\!\widetilde{y}\|^2-
  \frac{1}{2n}\|\widetilde{Z}\beta^{k-1}\!-\!\widetilde{y}\|^2\big|\le 0.1,
    \end{array}\right.
  \]
  or the number of iterates is over the maximum number $k_{\rm max}=4$
  (Our code can be achieved from \url{https://github.com/SCUT-OptGroup/ErrorInvar}).
  Such a stopping criterion aims to capture a solution
  $\beta^k$ whose sparsity tends to be stable on one hand, and on the other hand
  its predictor error has a small variation. In addition,
  by Remark \ref{sign-remark}(b), we have a rough upper estimation
  for $\overline{k}$ is $\lceil\frac{0.5\ln(p)}{\ln(1.4)}\rceil$,
  which equals $11$ for $p=1000$. In view of this, we set the maximum
  number of iterate to be $4$.  We solve the dual of \eqref{expm-subx}
  with Algorithm \ref{SNAL} for $\epsilon^j=10^{-8}$. For NCL,
  we directly run the code ``doProjGrad'', which is solving
  the model \eqref{CLASSO} with $\lambda_n=0$ and $R_0=\|\beta^*\|_1$,
  for the test examples.
  Since the Matlab code of CoCoLasso is not available,
  we include our implementation in Appendix D. Since
  it is time-consuming for Algorithm \ref{ADMM-alg} to use the stopping rule
  $\max\{\epsilon_{{\rm pinf}}^k,\epsilon_{{\rm dinf}}^k,\epsilon_{{\rm gap}}^k\}\!\le 10^{-5}$,
  we use the looser
  \(
  \max\{\epsilon_{{\rm pinf}}^k,\epsilon_{{\rm dinf}}^k,10^{-3}\epsilon_{{\rm gap}}^k\}\!\le 10^{-4}
  \)
  to get an approximate solution of \eqref{Convex1}, and then
  use Algorithm \ref{SNAL} to solve the associated problem \eqref{CoCoLasso}.

  From the theoretic results in Section \ref{sec4}, the appropriate $\lambda$
  lies in an interval associated to $\|\widetilde{\varepsilon}\|_{\infty}$.
  Such $\lambda$ is also suitable for CoCoLasso by the proof of Theorem 1
  and 2 in \cite{Datta16}. In view of this, we set $\lambda=\max(0.01,\frac{\alpha^*}{n}\|\widetilde{Z}^{\mathbb{T}}\widetilde{y}\|_{\infty})$
  and $\max(0.01,\frac{\alpha^*}{n}\|\overline{Z}^{\mathbb{T}}\overline{y}\|_{\infty})$
  for CaZnRLS and CoCoLasso, respectively, where the appropriate $\alpha^*\in[0.06,0.32]$
  is chosen by using $5$-fold corrected cross-validation proposed in \cite{Datta16}.

  Throughout this section, all test examples are generated randomly with
  the triple $(p,s,n)$ consisting of the dimension $p$ of predicted variable,
  the number of nonzero entries of $\beta^*$, and the sample size $n$.
  Among others, $n=\lfloor\alpha s\ln(p)\rfloor$ with
  $\alpha=4+0.2(j\!-\!1)$ for $j=1,\ldots,11$.
  We obtain the observation $y$ from the model \eqref{observation}
  where the entries of $\varepsilon$ are i.i.d. $\mathcal{N}(0,\sigma^2)$,
  and the generating way of the true $\beta_{S^*}^*$ is specified in the sequel.
  The average relative RMSE (respectively, NC and NIC) is the average of
  the total RMSE (respectively, NC and NIC) for $100$ problems generated randomly.

  \noindent
  {\bf 5.1. Random locations of the nonzero entries of $\beta^*$}\label{sec5.1}

  In this part, we evaluate the performance of CaZnRLS by the examples
  generated randomly, where $\beta_{S^*}^*$ is an i.i.d. standard normal
  random vector with the $s=\lfloor0.5\sqrt{p}\rfloor$ entries of $S^*$
  chosen randomly from $\{1,\ldots,p\}$. First, we test whether CaZnRLS
  is stable with respect to the variance $\sigma$ of $\varepsilon$.
 \noindent
 {\bf 5.1.1. Performance of CaZnRLS under low and high noise}\label{sec5.1}

 \begin{example}\label{example1}
  We generate $Z=X\!+A$ with $p=500$, where the rows of $X$ are i.i.d.
  standard normal random vectors with mean zero and covariance matrix $\Sigma_X=I$,
  and the rows of $A$ are i.i.d. $\mathcal{N}(0,I)$.
 \end{example}

  Figure \ref{fig-example1} plots the average relative RMSE, NC and NIC curves
  of CaZnRLS, CoCoLasso and NCL for Example \ref{example1} under different sample
  sizes with $\sigma=0.5$ and $1.0$. The subfigures on the first column show that
  CaZnRLS is comparable even a little better than CoCoLasso in terms of the relative RMSE,
  the second column shows that the NC of CaZnRLS is at most $2$ fewer than
  that of CoCoLasso, while the third column indicates that the NIC of CaZnRLS
  is much fewer than that of CoCoLasso. From this, we conclude that CaZnRLS keeps
  the merit of the zero-norm regularized LS estimator in the clean data setting.
  We also see that CaZnRLS has a similar performance for $\sigma=0.5$ and $\sigma=1$,
  indicating that it is insensitive to the variance $\sigma$ of regression error.
  So, in the sequel we always take $\sigma=0.5$.
  \begin{figure}[H]
  \setlength{\abovecaptionskip}{-0.2cm}
  \centerline{\epsfig{file=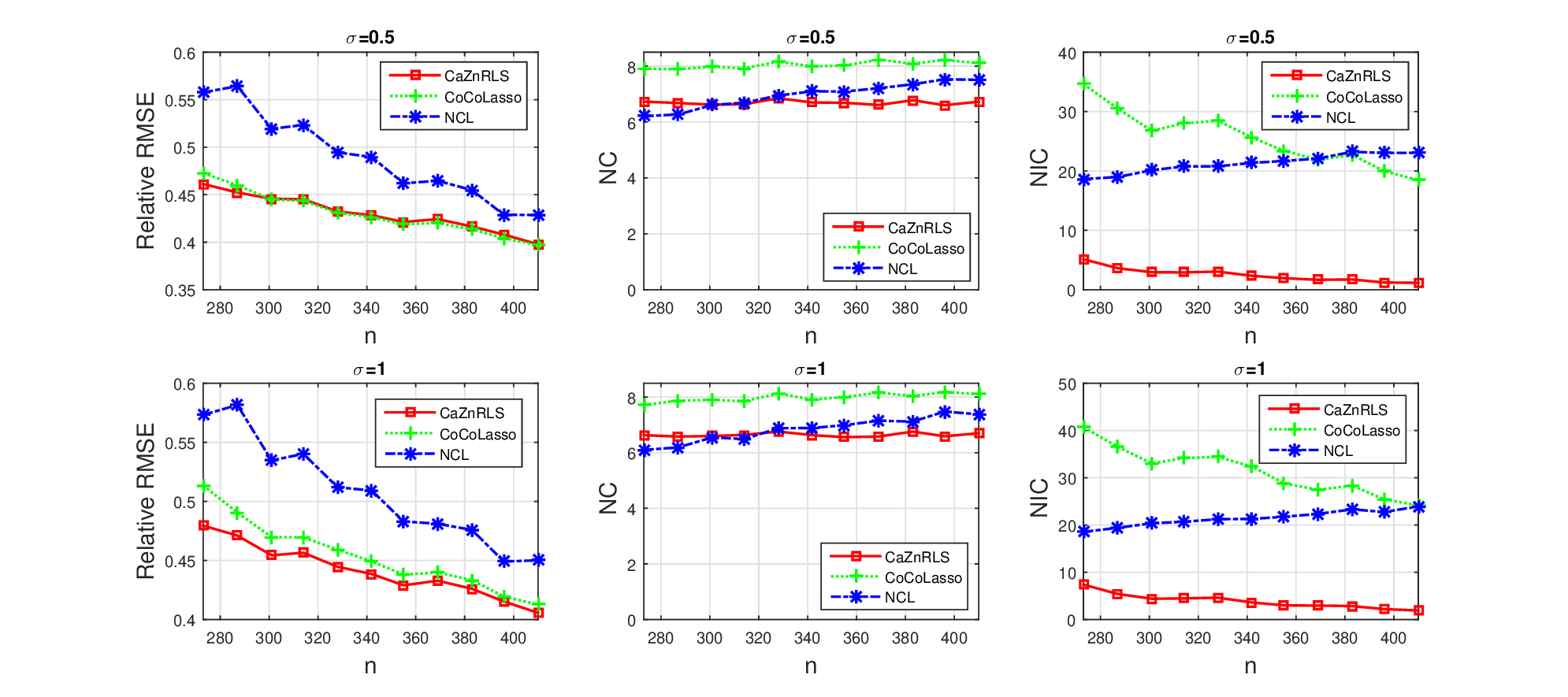,width=4.5in}}\par
 \caption{\small The relative RMSE, NC and NIC of three solvers under different $\sigma$}
 \label{fig-example1}
 \end{figure}

 \noindent {\bf 5.1.2. Performance of CaZnRLS for different measurement errors}\label{sec5.2}

  In this part we evaluate the performance of CaZnRLS for three classes of measurement errors
  by using test problems generated with $p=1000$.

 \noindent
 {\bf Case 1. Additive errors}\label{sec5.2.1}
 \begin{example}\label{example2}
   We generate $Z=X+A$ where $X$ is same as in Example \ref{example1},
  and the rows of $A$ are i.i.d. $\mathcal{N}(0,\tau^2I)$ with $\tau=0.5$ or $1.0$.
 \end{example}
 \begin{example}\label{example3}
  We generate $Z=X+A$ where the entries of $X$ are i.i.d. and
  follow the uniform distribution on $(0,1)$, and $A$ is same as
  in Example \ref{example2}.
 \end{example}

 \begin{figure}[H]
  \setlength{\abovecaptionskip}{-0.2cm}
  \centerline{\epsfig{file=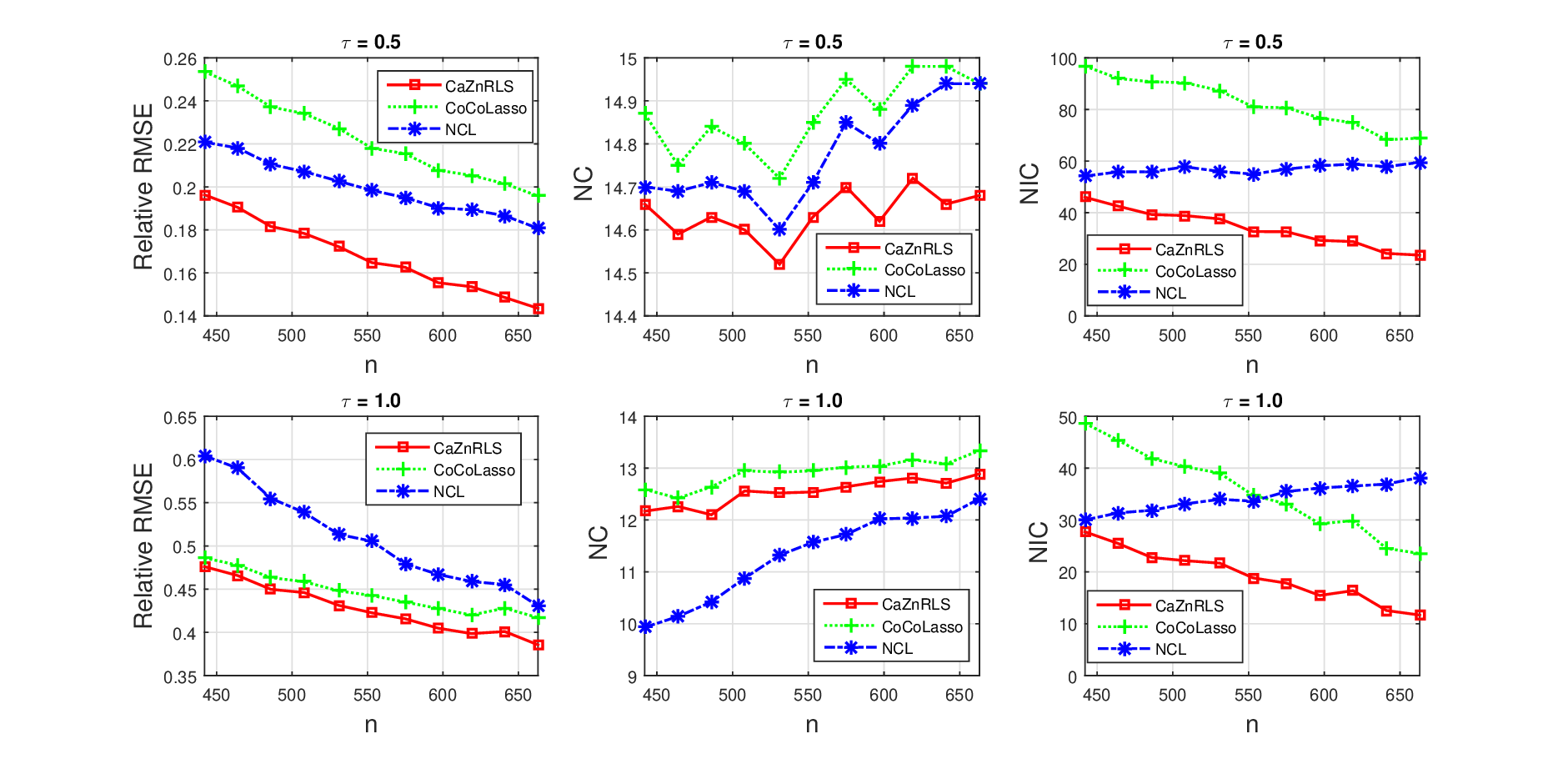,width=4.5in}}\par
  \caption{\small The relative RMSE, NC and NIC of three solvers under different $n$}
  \label{fig-example2}
 \end{figure}

  Figure \ref{fig-example2} plots the average relative RMSE, NC and NIC curves
  of three solvers under different sample sizes for Example \ref{example2}.
  From this figure, whether $X$ is corrupted by high noise or low noise,
  CaZnRLS is the best among three solvers in terms of the relative RMSE and NIC,
  though its NC is (at most $1$) fewer than the NC of CoCoLasso. The relative RMSE
  of CaZnRLS improves that of CoCoLasso at least $20\%$ for the low noise,
  and $4\%$ for the high noise when $n\ge \lfloor5s\ln(p)\rfloor$.
  We also see that NCL has the worst performance in terms of the relative RMSE, NC
  and NIC for the high noise.
 \begin{figure}[H]
  \setlength{\abovecaptionskip}{-0.2cm}
 \centerline{\epsfig{file=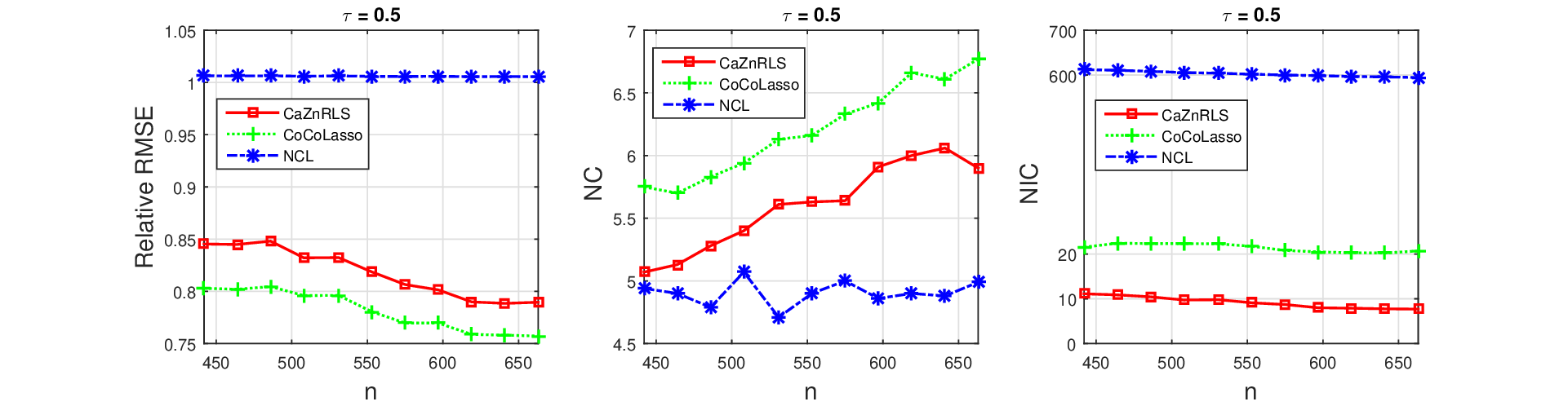,width=4.5in}}\par
 \caption{\small The relative RMSE, NC and NIC of three solvers under different $n$}
 \label{fig-example3}
 \end{figure}

  Figure \ref{fig-example3} plots the average relative RMSE, NC and NIC curves
  of three solvers under different sample sizes for Example \ref{example3}.
  We see that three solvers have much higher relative RMSE than they do
  for Example \ref{example2}, and NCL even fails in giving the desired estimator.
  The relative RMSE of CaZnRLS is a little (about $4\%$) higher than that of CoCoLasso.
  After checking the unbiased estimation $\Sigma$ of the covariance matrix
  of the true covariates, we find that the irrepresentable and minimum eigenvalue conditions
  in \cite{Datta16} are not satisfied. Now it is not clear whether our REC on
  $\mathcal{C}(\beta^*)$ holds or not. This does not contradict to the theoretical analysis
  in Section \ref{sec4} since now it is only known that our REC on $\mathcal{C}(\beta^*)$
  holds w.h.p. when $X$ is from Gaussian ensemble. The first subfigure
  indicates that it is very likely for our REC not to hold when $X$ is from the uniform
  distribution.

 \noindent
 {\bf Case 2. Multiplicative errors}\label{sec5.2.2}
 \begin{example}\label{example4}
  We generate $Z=X\circ M$ where the rows of $X$ are i.i.d. $\mathcal{N}(0,I)$,
  and the entries of $M$ are i.i.d. and follow the log-normal distribution,
  i.e., $\ln(M_{ij})$ are i.i.d and follow $N(0,\tau^{2}I)$
  with $\tau=0.5$ or $0.8$.
 \end{example}
 \begin{example}\label{example5}
  We generate $Z$ in the same way as in Example \ref{example4}
  except that the entries of $X$ are i.i.d. and follow the Laplace distribution
  of mean 0 and variance 1.
 \end{example}

  \begin{figure}[H]
  \setlength{\abovecaptionskip}{-0.2cm}
   \centerline{\epsfig{file=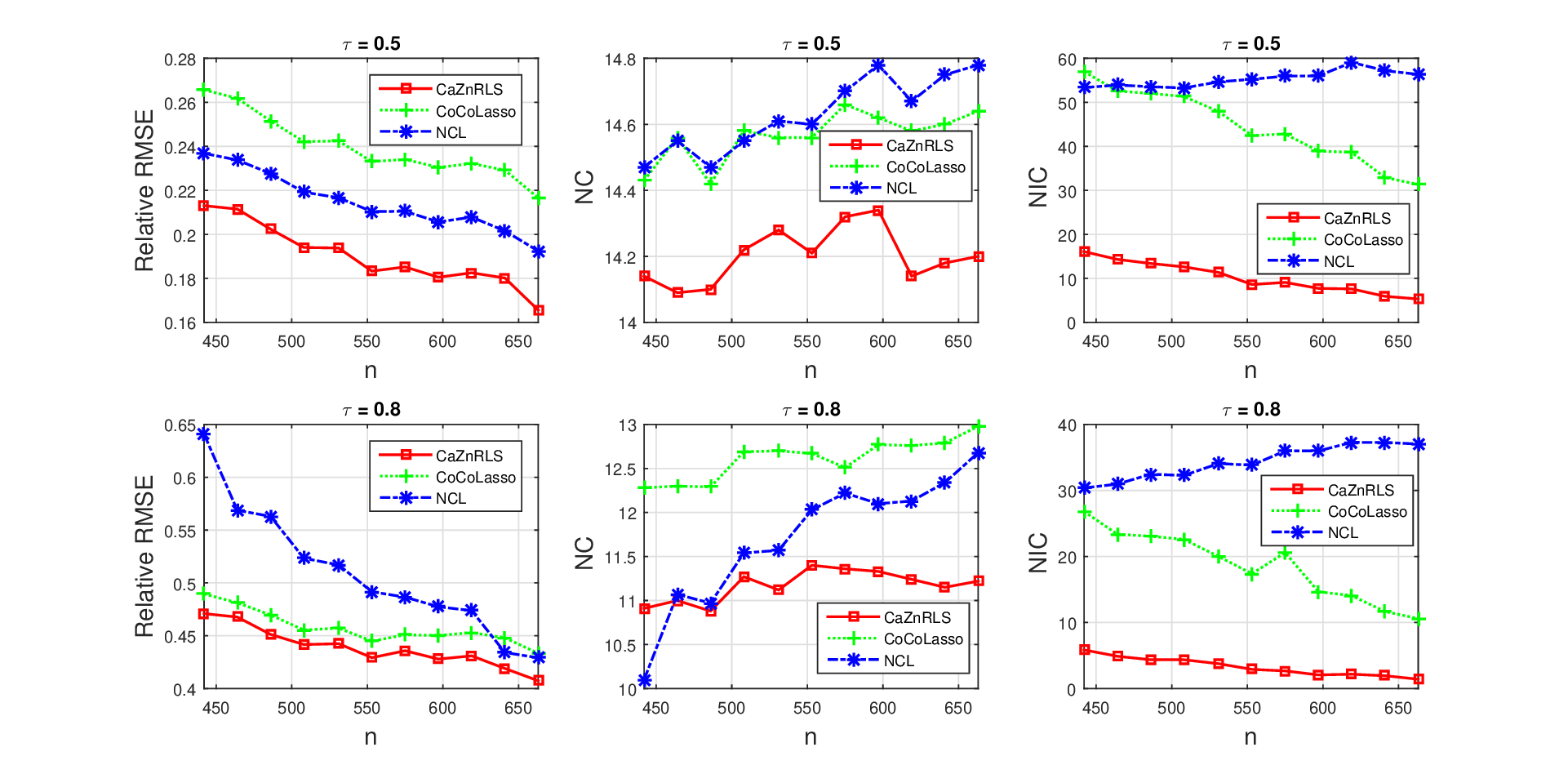,width=4.5in}}\par
  \caption{\small The relative RMSE, NC and NIC of three solvers under different $n$}
 \label{fig-example4}
 \end{figure}
 \begin{figure}[H]
  \setlength{\abovecaptionskip}{-0.2cm}
   \centerline{\epsfig{file=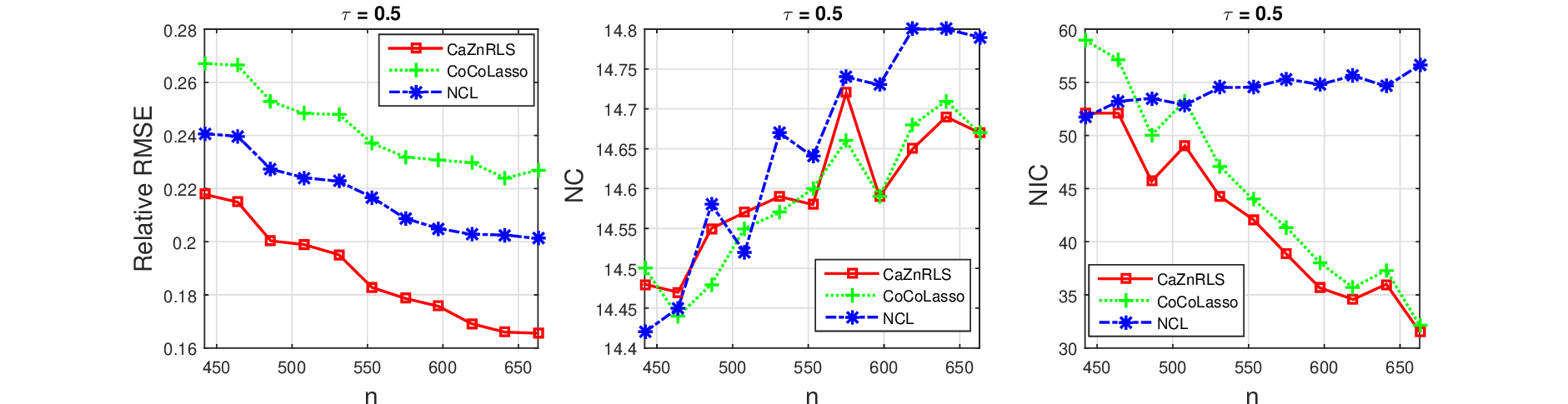,width=4.5in}}\par
  \caption{\small The relative RMSE, NC and NIC of three solvers under different $n$}
 \label{fig-example5}
 \end{figure}

 Figure \ref{fig-example4} and \ref{fig-example5} plot the average relative RMSE,
 NC and NIC curves of three solvers under different $n$ for
 Example \ref{example4} and \ref{example5}, respectively.
 By comparing Figure \ref{fig-example4} with Figure \ref{fig-example2},
 we see that CaZnRLS and CoCoLasso have the similar performance
 as they do for the additive errors. That is, CaZnRLS is better than
 CoCoLasso in terms of the relative RMSE and NIC whether for $X$
 corrupted by high noise or low noise, though its NC is (at most $2$) fewer
 than the NC of CoCoLasso. Along with Figure \ref{fig-example5},
 we conclude that CaZnRLS has the similar performance when the rows of $X$ follow
 the Gaussian and Laplace distribution.

 \noindent
 {\bf Case 3. Missing data case}\label{sec5.2.3}

 \begin{example}\label{example6}
  We generate $(Z_{ij})_{n\times p}$ for $Z_{ij}=X_{ij}$
  with probability $1-\tau$ and $Z_{ij}=0$ with probability $\tau$
  for $\tau=0.3$ or $0.5$, where the rows of $X$ are i.i.d. and obey
  the standard normal distribution $\mathcal{N}(0,I)$.
  \end{example}
 \begin{example}\label{example7}
  We generate $Z$ in the same way as in Example \ref{example6} except
  that $X_{ij}$ are i.i.d. and obey the exponential
  with mean 1 and variance 1.
 \end{example}
 \begin{figure}[H]
 \setlength{\abovecaptionskip}{-0.2cm}
  \centerline{\epsfig{file=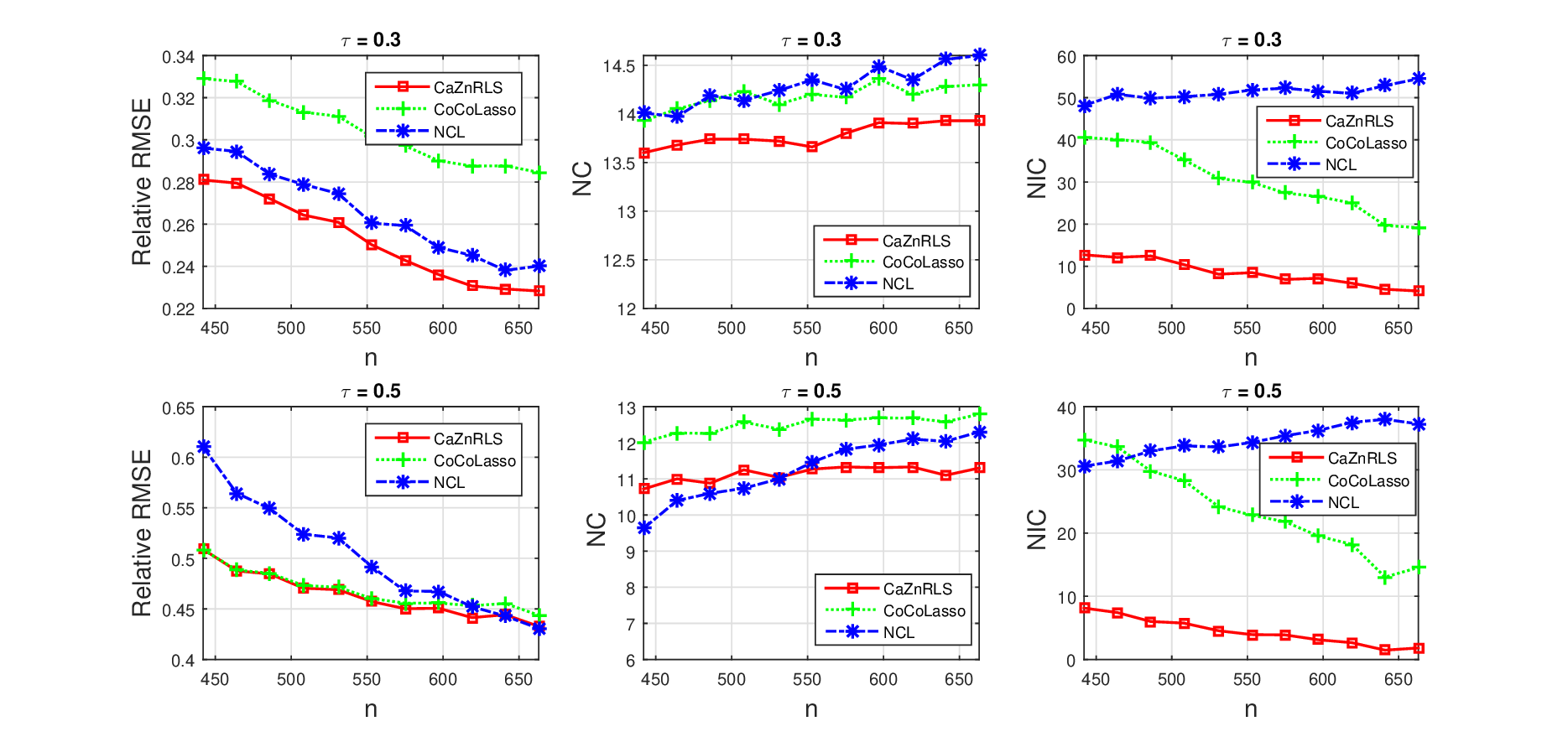,width=4.5in}}\par
 \caption{\small The relative RMSE, NC and NIC of three solvers under different $n$}
 \label{fig-example6}
 \end{figure}
  \begin{figure}[H]
  \setlength{\abovecaptionskip}{-0.2cm}
  \centerline{\epsfig{file=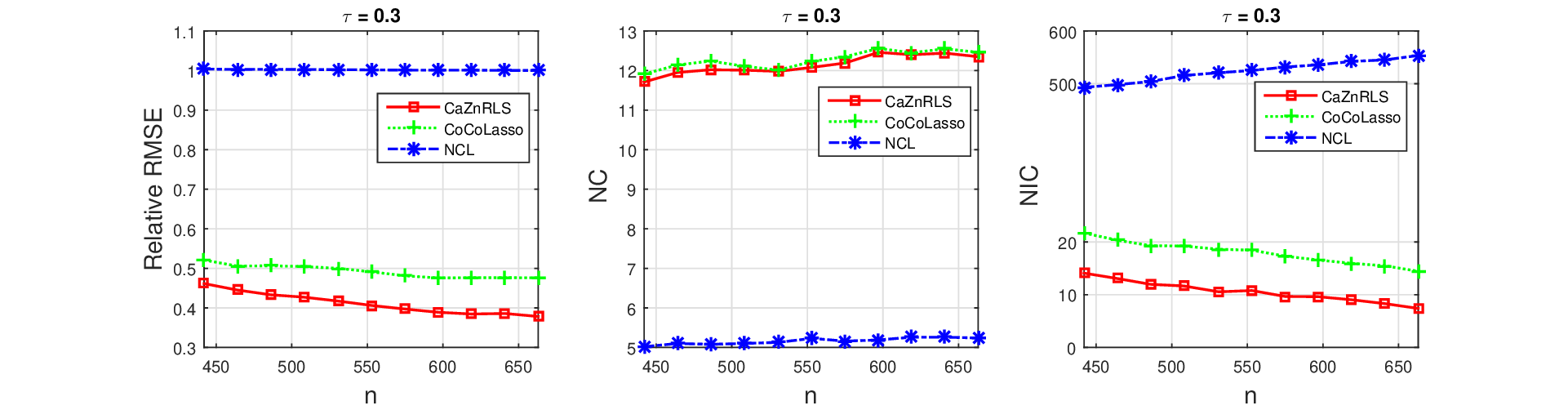,width=4.5in}}\par
 \caption{\small The relative RMSE, NC and NIC of three solvers under different $n$}
 \label{fig-example7}
 \end{figure}

 Figure \ref{fig-example6} and \ref{fig-example7} plot the average relative RMSE,
 NC and NIC curves of three solvers under different $n$ for Example \ref{example6}
 and \ref{example7}, respectively. Comparing Figure \ref{fig-example6} with
 Figure \ref{fig-example2} or \ref{fig-example4}, we see that the three solvers
 have similar performance as they do for the additive and multiplicative errors.
 In fact, we check that similar to Example \ref{example2} and
 \ref{example4}-\ref{example5}, Example \ref{example6} satisfies the irrepresentable
 and minimum eigenvalue conditions in \cite{Datta16} when $n\ge \lfloor4.4s\ln(p)\rfloor$.
 Of course, our REC on $\mathcal{C}(\beta^*)$ holds with a high probability for
 Example \ref{example2} and \ref{example4}, and Figure \ref{fig-example5}-\ref{fig-example7}
 also indicate that our REC holds with a high probability when the rows of $X$ follow
 the Laplace and exponential distributions. Figure \ref{fig-example7} shows that,
 when the entries of $X$ follow the exponential distribution, CaZnRLS is superior to
 the other two solvers in terms of the relative RMSE and NIC, and its RMSE improves
 that of CoCoLasso at least $11\%$. Now NCL fails in yielding the desired estimator.
 After checking, we find that Example \ref{example7} actually does not satisfy
 the irrepresentable and minimum eigenvalue conditions in \cite{Datta16}.
 Now it is not clear whether our REC holds or not for this example.

  Motivated by one referee's comments, we next provide an example
  that does not satisfy the irrepresentable condition but our REC holds w.h.p..
  \begin{example}\label{example8}
   We generate $Z=X+A$ with $p=250$ where the entries of $X_{S^*}$ are
   i.i.d. $\mathcal{N}(0,1)$, the entries of $X_{(S^*)^{c}}$ are i.i.d.
   $\mathcal{N}(0,5^2)$, and the rows of $A$ are generated in the same way
   as in Example \ref{example2} with $\tau=0.75$.
  \end{example}

  Figure \ref{fig-example8} plots the average relative RMSE,
  NC and NIC curves of CaZnRLS and CoCoLasso under different $n$
  for Example \ref{example8}. Since NCL fails in this example,
  we do not include its results in Figure \ref{fig-example8}.
  We see that the relative RMSE of CaZnRLS is lower than that of CoCoLasso,
  and when $n\ge \lfloor5s\ln(p)\rfloor$, the relative RMSE of CaZnRLS
  improves at least $10\%$ that of CoCoLasso.
  The NC and NIC of CoCoLasso are still higher than those of CaZnRLS,
  but NC of the latter is at most $1$ lower than that of the former.
  This example further confirms the theoretical results in Section \ref{sec4}.
  \begin{figure}[H]
  \setlength{\abovecaptionskip}{-0.2cm}
  \centerline{\epsfig{file=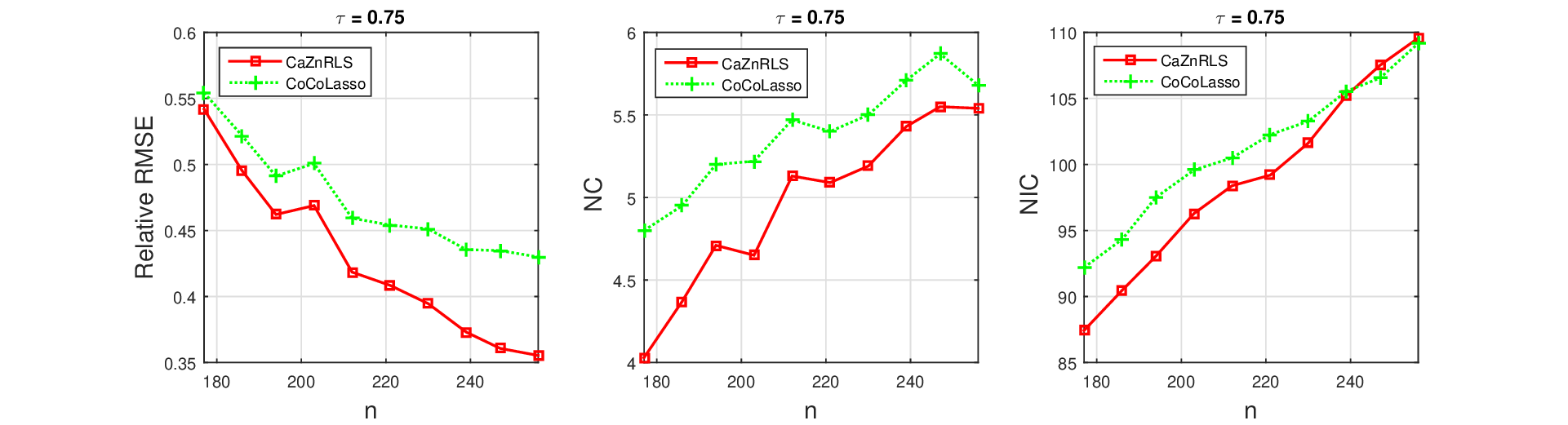,width=4.5in}}\par
 \caption{\small The relative RMSE, NC and NIC of three solvers under different $n$}
 \label{fig-example8}
 \end{figure}

  \noindent
  {\bf 5.2. Fixed locations of the nonzero entries of $\beta^*$}\label{sec5.2}

  As one referee pointed out, it would be interesting to show
 the effects of the correlation between the predictors on the performance of
 three solvers. In this part, we test whether the correlation between
 the predictors has an effect on the performance of three solvers or not,
 by using the examples generated by \cite{Datta16} in which the locations of
  the nonzero entries of $\beta^*$ are fixed. Specifically,
  $\beta^*=(3,1.5,0,0,2,0,\ldots,0)$ with the number of nonzero entries $s=3$.
  The data $X$ is generated with $p=250$ and $n=100$ such that the rows of $X$
  obey i.i.d. $\mathcal{N}(0,\Sigma_{X})$ for $(\Sigma_{X})_{ij}=0.5^{|i-j|}$.
  Table \ref{table1} summaries the simulation results of three solvers
  for additive errors, multiplicative errors and missing data, where
  the error matrices $A$ and $M$ for the additive and multiplicative errors
  are generated in the same way as in Example \ref{example2} and \ref{example4}
  respectively, while the contaminated matrix $Z$ in missing data is generated in
  the same way as in Example \ref{example6}.

 \begin{table}[H]\tiny
 \setlength{\abovecaptionskip}{2pt}
 \setlength{\belowcaptionskip}{0pt}
 \centering
 \caption{The average relative RMSE, NC and NIC of three solvers}\label{table1}
 \tiny
 \begin{tabular}{|c|ccc|ccc|ccc|}
 \hline
 \raisebox{2.0ex}[15pt]{}
 \multirow{4}*{}
  &\multicolumn{3}{c|}{\bf Additive errors } & \multicolumn{3}{c|}{\bf Multiplicative errors }
  &\multicolumn{3}{c|}{\bf Missing data } \\
  &\multicolumn{3}{c|}{\bf $\tau=1$} & \multicolumn{3}{c|}{\bf  $\tau=0.8$} &\multicolumn{3}{c|}{\bf $\tau=0.5$} \\
  \cline{2-10}
  \!&\!CaZnRLS\!&\!CoCoLasso\!&\!NCL\!&\!CaZnRLS\!&\!CoCoLasso\!&\!NCL\!&\!CaZnRLS\!&\!CoCoLasso&\!NCL\\
 \hline
 RMSE& 0.410 &0.492 &0.535 & 0.370&0.524 &0.600 &0.447 & 0.521&0.528\\
 \hline
 NC & 2.81 &2.87 &2.41 &2.76 &2.87 &2.18 &2.69 &2.75 &2.27\\
 \hline
 NIC& 1.48& 2.46& 6.48 &1.30 &2.48 &5.31 &2.41 &2.60 &6.90\\
 \hline
 \end{tabular}
 \end{table}

 From Table \ref{table1}, CaZnRLS yields the lowest relative
 RMSE and NIC for three classes of measurement errors though
 its NC is a little fewer than that of CoCoLasso, while NCL
 yields the highest relative RMSE and NIC. By comparing with
 the numerical comparison results in Section 5.1,
 the three solvers have similar performance as they do for
 those examples where the locations of the nonzero entries
 of $\beta^*$ are not fixed. That is, the correlation between
 the predictors has little influence on their performance.

 From the numerical comparisons in the last two subsections,
 when the true covariate matrix $X$ comes from the standard normal distribution
 (now our REC holds with a high probability) or other distributions such as
 the Laplace one in Example \ref{example5} and the exponential one
 in Example \ref{example7}, CaZnRLS is superior to CoCoLasso in terms of
 relative RMSE (especially for low noise cases) and NIC, although its NC is
 a little lower than that of CoCoLasso. As shown in Figure \ref{figure-time},
 CaZnRLS requires much less computing time.
 \begin{figure}[H]
 \setlength{\abovecaptionskip}{-0.2cm}
  \centerline{\epsfig{file=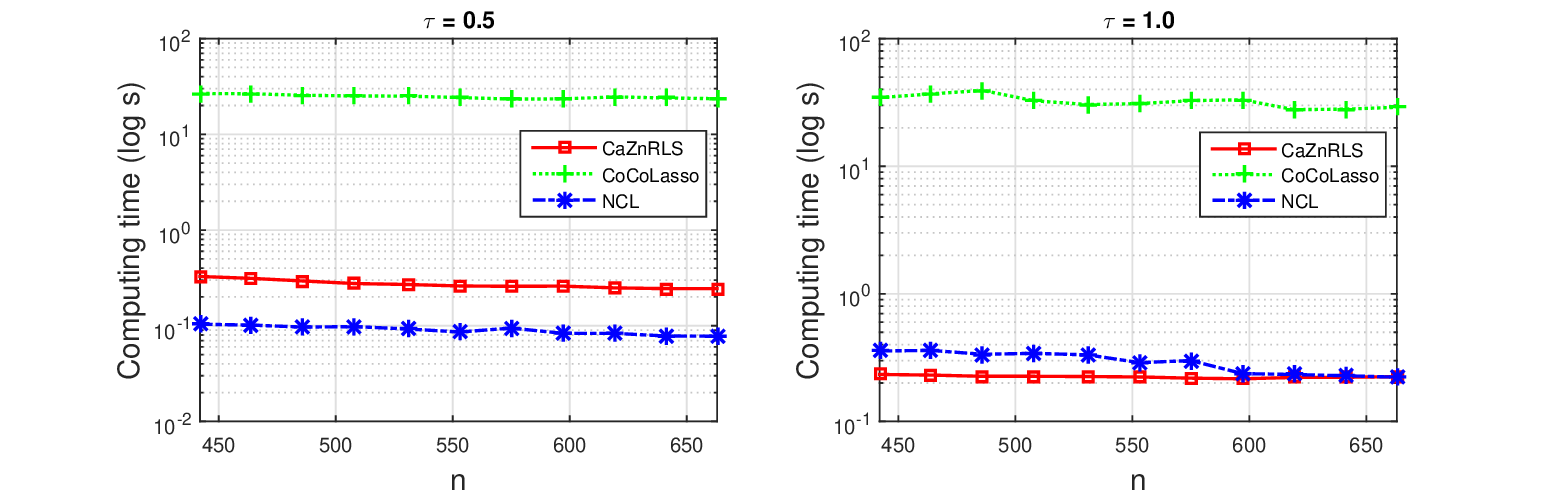,width=4.5in}}\par
 \caption{\small The computing time of three solvers for Example \ref{example2}}
 \label{figure-time}
 \end{figure}

\vskip 14pt
\noindent {\large\bf Supplementary Materials}

 \noindent
 {\bf\large Appendix A}

 In this part, we write $\Delta\beta^k=\beta^k-\beta^{*}$ and $v^k=e-w^k$ for $k=1,2,\ldots$.

 \noindent
 {\bf A.1. The proof of Theorem \ref{errbound1}}

 To get the conclusion of Theorem \ref{errbound1}, we need
 the following two lemmas.
 \begin{lemma}\label{relation}
  For any $\beta\in\mathbb{R}^p$, it holds that
  $\frac{1}{2n}\|\widetilde{Z}\beta\|^2\ge
  \frac{1}{2n}\|X\beta\|^2+\frac{1}{2}\beta^{\mathbb{T}}D\beta$.
 \end{lemma}
 \begin{proof}
 From $\widetilde{\Sigma}=\frac{1}{n}\widetilde{Z}^{\mathbb{T}}\widetilde{Z}$
 and $\widetilde{\Sigma}=\widehat{\epsilon}I+\Pi_{\mathbb{S}_{+}^p}(\widehat{\Sigma}-\widehat{\epsilon}I)$,
 for any $\beta\in\mathbb{R}^p$, we get
 \begin{align*}
  \frac{1}{2n}\big\|\widetilde{Z}\beta\big\|^2
  &=\frac{1}{2n}\|X\beta\|^2+\frac{1}{2}\beta^{\mathbb{T}}
    (\widetilde{\Sigma}-\widehat{\Sigma})\beta+\frac{1}{2}\beta^{\mathbb{T}}
    (\widehat{\Sigma}-\Sigma)\beta\\
 &=\frac{1}{2n}\|X\beta\|^2+\frac{1}{2}\beta^{\mathbb{T}}\Pi_{\mathbb{S}_{+}^p}(\widehat{\epsilon}I-\widehat{\Sigma})\beta
   +\frac{1}{2}\beta^{\mathbb{T}}D\beta\\
 &\ge \frac{1}{2n}\|X\beta\|^2 +\frac{1}{2}\beta^{\mathbb{T}}D\beta
 \end{align*}
 where the inequality is by the positive semidefiniteness of
 $\Pi_{\mathbb{S}_{+}^p}(\widehat{\epsilon}I-\widehat{\Sigma})$.
 \end{proof}
 \begin{lemma}\label{err-lemma}
  Suppose that for some $k\ge 1$ there exists an index set $S^{k-1}\supseteq S^{*}$
  such that
  \(
    \max_{i\in(S^{k-1})^c}w^{k-1}_i\!\le \frac{1}{2}.
  \)
  Then, whenever $\lambda\ge 8\|\widetilde{\varepsilon}\|_\infty$, it holds that
  \begin{align*}
   \|\Delta\beta^k_{(S^{k-1})^c}\|_1\le 3\|\Delta\beta^k_{S^{k-1}}\|_1,\qquad\qquad\qquad\\
   \frac{1}{2n}\big\|\widetilde{Z}\Delta\beta^k\big\|^2
    \le\Big(\|\widetilde{\varepsilon}_{\!S^{k-1}}\|
    +\lambda\sqrt{{\textstyle\sum_{i\in S^{*}}}(v^{k-1}_i)^2}\Big)\big\|\Delta\beta_{\!S^{k-1}}^k\big\|.
  \end{align*}
 \end{lemma}
 \begin{proof}
  From the optimality of $\beta^k$ and the feasibility of $\beta^{*}$ to
  \eqref{expm-subx}, we have
 \[
  \frac{1}{2n}\big\|\widetilde{Z}\beta^k-\widetilde{y}\big\|^2
  +\lambda\sum_{i=1}^p v_i^{k-1}\big|\beta^k_{i}\big|
  \le\frac{1}{2n}\big\|\widetilde{Z}\beta^{*}-\widetilde{y}\big\|^2 +\lambda\sum_{i=1}^pv_i^{k-1}\big|\beta^{*}_{i}\big|
 \]
 which, by $\Delta\beta^k\!=\beta^k-\beta^{*}$ and
 $\widetilde{\varepsilon}\!=\frac{1}{n}\widetilde{Z}^{\mathbb{T}}(\widetilde{y}-\widetilde{Z}\beta^{*})$,
 can be rearranged as
 \begin{align}\label{noise1p-eb}
  \frac{1}{2n}\big\|\widetilde{Z}\Delta\beta^k\big\|^2
  &\le \langle\widetilde{\varepsilon},\Delta\beta^k\rangle
     +\lambda\!\sum_{i\in S^{*}} v_i^{k-1}\left(\big|\beta^{*}_{i}\big|-\big|\beta^k_{i}\big|\right)
     -\lambda\!\sum_{i\in (S^{*})^{c}} v_i^{k-1}\big|\beta^k_{i}\big|\nonumber\\
  &\le \langle\widetilde{\varepsilon},\Delta\beta^k\rangle
      +\lambda\!\sum_{i\in S^{*}} v^{k-1}_i\big|\Delta\beta^k_{i}\big|
      -\lambda\!\sum_{i\in(S^{k-1})^c}\!v^{k-1}_i\big|\Delta\beta^k_{i}\big|\\
  &\le \sum_{i\in S^{k-1}}\big|\widetilde{\varepsilon}_{i}\big|\big|\Delta\beta^k_{i}\big|
       +\sum_{i\in (S^{k-1})^{c}}\big|\widetilde{\varepsilon}_{i}\big|\big|\Delta\beta^k_{i}\big|\nonumber\\
  &\quad\ +\lambda\!\sum_{i\in S^{*}} v^{k-1}_i\big|\Delta\beta^k_{i}\big|
         -\lambda\!\sum_{i\in(S^{k-1})^c}\!v^{k-1}_i\big|\Delta\beta^k_{i}\big|\nonumber\\
  &\le (\lambda+\!\|\widetilde{\varepsilon}\|_\infty) \big\|\Delta\beta^k_{S^{k-1}}\big\|_1
      +\big(\|\widetilde{\varepsilon}\|_\infty\!-\!\lambda/2\big)
          \big\|\Delta\beta^k_{(S^{k-1})^c}\big\|_1\nonumber
 \end{align}
 where the second inequality is using $S^{k-1}\supseteq S^*$,
 and the last one is due to $v_i^k\le 1$ for $i\in S^*$ and
 $\min_{i\notin S^{k-1}}v^{k-1}_i\ge \frac{1}{2}$.
 From $\lambda\ge 8\|\widetilde{\varepsilon}\|_\infty$
 and $\frac{1}{2n}\big\|\widetilde{Z}\Delta\beta^k\big\|^2\ge 0$,
 we obtain the first inequality. For the second inequality,
 by using inequality \eqref{noise1p-eb}
 and $\min_{i\notin S^{k-1}}v^{k-1}_i\ge \frac{1}{2}$,
 it follows that
 \begin{align*}
   \frac{1}{2n}\big\|\widetilde{Z}\Delta\beta^k\big\|^2
   &\le \sum_{i=1}^{p}|\widetilde{\varepsilon}_{i}||\Delta\beta^k_{i}|
       -\frac{1}{2}\lambda\!\sum_{i\in(S^{k-1})^c}\!\big|\Delta\beta^k_{i}\big|
       +\lambda\!\sum_{i\in S^{*}}v^{k-1}_i\big|\Delta\beta^k_{i}\big|\nonumber\\
  &\le \sum_{i\in S^{k-1}}\big|\widetilde{\varepsilon}_{i}\big|\big|\Delta\beta^k_{i}\big|
        +\lambda\textstyle\sum_{i\in S^{*}}v^{k-1}_i\big|\Delta\beta^k_{i}\big|\\
  &\le \|\widetilde{\varepsilon}_{\!S^{k-1}}\|\big\|\Delta\beta_{\!S^{k-1}}^k\big\|
     +\lambda\!\sqrt{{\textstyle\sum_{i\in S^{*}}}(v^{k-1}_i)^2}\,\big\|\Delta\beta_{\!S^{k-1}}^k\big\|,
  \end{align*}
 where the second inequality is due to $\lambda\ge 8\|\widetilde{\varepsilon}\|_\infty$.
 \end{proof}

 \noindent
 {\bf The proof of Theorem \ref{errbound1}:}
 Define $S^{k-1}\!:= S^{*}\cup\{i\notin S^{*}\!: w_i^{k-1}>\frac{1}{2}\}$
 for each $k\in\mathbb{N}$.
 We first argue that if $|S^{l-1}|\leq 1.5s$ for some $l\in\mathbb{N}$,
 and consequently the following inequality holds
 \begin{equation}\label{betal}
   \big\|\Delta\beta^{l}\big\|\le \frac{2(\|\widetilde{\varepsilon}\|_{\infty}\sqrt{1.5s}
    +\lambda\sqrt{s})}{\kappa-24s\|D\|_{\rm max}}
    \le \frac{(2+\!\sqrt{6}/8)\lambda\!\sqrt{s}}{\kappa-24s\|D\|_{\rm max}}.
 \end{equation}
 Since $S^{l-1}\supseteq S^*$ with $|S^{l-1}|\leq 1.5s$ and
 $\lambda\ge 8\|\widetilde{\varepsilon}\|_\infty$,
 from Lemma \ref{err-lemma} we have
 \begin{align}
  \frac{1}{2n}\big\|\widetilde{Z}\Delta\beta^l\big\|^2
  &\le\left[\|\widetilde{\varepsilon}_{\!S^{l-1}}\|
    +\lambda\!\sqrt{{\textstyle\sum_{i\in S^{*}}}(v^{l-1}_i)^2}\right]
    \big\|\Delta\beta_{\!S^{l-1}}^l\big\|,\nonumber\\
  \label{Dterm}
  \big|(\Delta\beta^l)^{\mathbb{T}}D\Delta\beta^l\big|
  &\le\|D\|_{\rm max}\|\Delta\beta^l\|_{1}^{2}
  =\|D\|_{\rm max}\big(\|\Delta\beta^l_{ S^{l-1}}\|_{1}+\|\Delta\beta^l_{(S^{l-1})^{c}}\|_{1}\big)^{2}\nonumber\\
  &\le 16\|D\|_{\rm max}\|\Delta\beta^l_{ S^{l-1}}\|_{1}^2
  \le 16|S^{l-1}|\|D\|_{\rm max}\|\Delta\beta^l_{S^{l-1}}\|^2\nonumber\\
  &\le 24s\|D\|_{\rm max}\|\Delta\beta_{S^{l-1}}^l\|^2.
 \end{align}
 By combining the last two inequalities with Lemma \ref{relation},
 it then follows that
 \[
  \frac{1}{2n}\big\|X\Delta\beta^l\big\|^2-12s\|D\|_{\rm max}\|\Delta\beta^l\|^2
  \le\left[\|\widetilde{\varepsilon}_{\!S^{l-1}}\|
    +\lambda\!\sqrt{{\textstyle\sum_{i\in S^{*}}}(v^{l-1}_i)^2}\right]
    \big\|\Delta\beta_{\!S^{l-1}}^l\big\|.
 \]
 Notice that $\Delta\beta^l\in\mathcal{C}(S^*)$ since $S^{l-1}\supseteq S^*$
 with $|S^{l-1}|\leq 1.5s$. Together with the $\kappa$-REC of $\Sigma$ on $\mathcal{C}(S^*)$,
 it is immediate to obtain
 \begin{align}\label{betal-ineq}
 \!\frac{1}{2}\big(\kappa-24s\|D\|_{\rm max}\big)\|\Delta\beta^l\|^2
  &\le \left[\|\widetilde{\varepsilon}_{\!S^{l-1}}\|
    +\lambda\!\sqrt{{\textstyle\sum_{i\in S^{*}}}(v^{l-1}_i)^2}\right]
    \big\|\Delta\beta_{\!S^{l-1}}^l\big\|\\
  &\le\big[\|\widetilde{\varepsilon}\|_{\infty}\sqrt{|S^{l-1}|}
    +\lambda\sqrt{s}\big]\big\|\Delta\beta^l\big\|\nonumber\\
  &\le \big[\|\widetilde{\varepsilon}\|_{\infty}\sqrt{1.5s}
    +\lambda\sqrt{s}\big]\big\|\Delta\beta^l\big\|.\nonumber
 \end{align}
 This, by $\|\widetilde{\varepsilon}\|_\infty\le\frac{1}{8}\lambda$,
 implies that the inequality \eqref{betal} holds.

 Next we show that $|S^{k-1}|\!\le 1.5s$ for all $k\in\mathbb{N}$.
 When $k=1$, this inequality holds automatically since $S^0=S^{*}$
 implied by $w^0\le\frac{1}{2}e$. Now assume that $|S^{k-1}|\le1.5s$ for $k=l$
  with $l\ge 1$. From the above argument, we have
  \(
    \|\beta^{l}\!-\beta^{*}\|
    \le\frac{(2+\!\sqrt{6}/8)\lambda\!\sqrt{s}}{\kappa-24s\|D\|_{\rm max}}.
  \)
 Notice that $i\in S^l\backslash S^*$ implies $i\notin S^*$ and
 $w_i^{l}\in(\frac{1}{2},1]$. By equation \eqref{wik}, the latter
 implies $\rho_l|\beta^{l}_{i}|\ge1$. Consequently,
 \begin{align}\label{Lemma-noise42-eb}
  \sqrt{|S^l\backslash S^*|}
  &\le\sqrt{{\textstyle\sum_{i\in S^l\backslash S^*}}(\rho_l|\beta^{l}_{i}|)^2}
   \le \rho_l\|\beta^{l}-\beta^{*}\|\nonumber\\
  &\le \frac{(2+\!\sqrt{6}/8)\rho_{l}\lambda\!\sqrt{s}}{\kappa-24s\|D\|_{\rm max}}
  \le\sqrt{0.5s}
 \end{align}
 where the last inequality is by $\rho_{l}\lambda\leq\rho_{3}\lambda
 \le\frac{2(\kappa-24s\|D\|_{\rm max})}{5\sqrt{2}}$. Thus, $|S^{l}|\leq 1.5s$.
 Hence, $|S^{k-1}|\le 1.5s$ for all $k\in\mathbb{N}$,
 and the error bound follows from \eqref{betal}.

  \noindent
 {\bf A.2. The proof of Theorem \ref{errbound2}}

 To achieve the conclusion of Theorem \ref{errbound2}, we need
 the following lemma.
 \begin{lemma}\label{indexk}
  Let $F^k$ and $\Lambda^k$ be the sets in \eqref{FLambdak}.
  Then, for each $k\in\{0\}\cup\mathbb{N}$,
  \[
    \sqrt{{\textstyle\sum_{i\in S^{*}}}(v^{k}_i)^2}
    \le\sqrt{{\textstyle\sum_{i\in S^{*}}}\max(\mathbb{I}_{\Lambda^k}(i),\mathbb{I}_{F^k}(i))}.
  \]
 \end{lemma}
 \begin{proof}
  Fix an arbitrary $i\in S^*$. If $i\in F^k$,
  from $v^{k}_i=1-w_i^{k}\le 1$ we have $v^{k}_i\le\mathbb{I}_{F^k}(i)$.
  If $i\notin F^k$, from $v^{k}_i=1-w_i^{k}$ and \eqref{wik},
  it follows that
  \(
    v_i^k=\max\big(0,\min(1,\frac{2a-(a+1)\rho_k|\beta_i^k|}{2(a-1)})\big),
  \)
  and hence
  \(
   v_i^k\le\mathbb{I}_{\{i:\,\rho_k|\beta^{k}_{i}|\le 2a/(a+1)\}}(i)
   \le \mathbb{I}_{\Lambda^k}(i).
  \)
  Thus, for each $i$, it holds that
  \(
   (v^{k}_i)^2\le v^{k}_i\le\max(\mathbb{I}_{\Lambda^k}(i),\mathbb{I}_{F^k}(i)).
  \)
  From this, it is immediate to obtain the desired result.
 \end{proof}

 \noindent
 {\bf The proof of Theorem \ref{errbound2}:}
  Write
  \(
   S^{k-1}:=S^{*}\cup\{i\notin S^{*}\!: w_i^{k-1}>\frac{1}{2}\}
  \)
  for each $k\in\mathbb{N}$.
 Since the conclusion holds automatically for $k=1$, it suffices to
 consider the case $k\ge 2$. From the proof of Theorem \ref{errbound1},
 we know that $|S^{k-1}|\le 1.5s$ for all $k\in\mathbb{N}$. Moreover,
 by using \eqref{Lemma-noise42-eb} and $\rho_k\ge 1$,
 \begin{equation}\label{Geps-Sk}
   \big\|\widetilde{\varepsilon}_{\!S^{k-1}}\big\|
   \le \big\|\widetilde{\varepsilon}_{\!S^{*}}\big\|
        +\sqrt{|S^{k-1}\backslash S^{*}|}\big\|\widetilde{\varepsilon}\big\|_\infty
    \le\big\|\widetilde{\varepsilon}_{\!S^{*}}\big\|
       +\frac{\rho_{k-1}\lambda}{8}\sqrt{|S^{k-1}\backslash S^{*}|}.
 \end{equation}
  By using inequality \eqref{betal-ineq} and Lemma \ref{indexk}, it follows that
  \begin{align*}
   &\|\beta^k-\beta^{*}\|
     \le \frac{2}{\kappa-24s\|D\|_{\rm max}}\left[\|\widetilde{\varepsilon}_{\!S^{k-1}}\|
     +\lambda\!\sqrt{{\textstyle\sum_{i\in S^{*}}}(v^{k-1}_i)^2}\right]\\
   &\le \frac{2}{\kappa-\!24s\|D\|_{\rm max}}\left[\|\widetilde{\varepsilon}_{\!S^{k-1}}\|
     +\lambda\!\sqrt{{\textstyle\sum_{i\in S^{*}}}\max(\mathbb{I}_{\Lambda^{k-1}}(i),\mathbb{I}_{F^{k-1}}(i))}\right]\\
   &\le \frac{2}{\kappa-\!24s\|D\|_{\rm max}}\left[\|\widetilde{\varepsilon}_{\!S^{k-1}}\|
     +\lambda\!\sqrt{{\textstyle\sum_{i\in S^{*}}}\max\big(\mathbb{I}_{\Lambda^{k-1}}(i), \big| |\beta^{k-1}_{\!_i}|-|\beta^{*}_{i}|\big|^2(\rho_{k-1})^2\big)}\right]\\
   &\le \frac{2}{\kappa-\!24s\|D\|_{\rm max}}\Big(\|\widetilde{\varepsilon}_{\!S^{k-1}}\|
        +\lambda\sqrt{\max\big({\textstyle\sum_{i\in S^{*}}}\mathbb{I}_{\Lambda^{k-1}}(i),
        (\rho_{k-1})^2\|\Delta\beta^{k-1}\|^2\big)}\Big)
  \end{align*}
  where the third inequality is by the definition of $F^{k-1}$.
  Together with \eqref{Geps-Sk},
 \begin{align*}
  \|\beta^k-\beta^{*}\|
  &\le\frac{2}{\kappa-\!24s\|D\|_{\rm max}}\Big[\big\|\widetilde{\varepsilon}_{\!S^{*}}\big\|
    +\lambda\!\sqrt{{\textstyle\sum_{i\in S^{*}}}\mathbb{I}_{\Lambda^{k-1}}(i)}
    +\frac{9\rho_{k-1}\lambda}{8}\big\|\Delta\beta^{k-1}\big\|\Big]\nonumber\\
    &\le\frac{2}{\kappa-\!24s\|D\|_{\rm max}}\Big(\big\|\widetilde{\varepsilon}_{\!S^{*}}\big\|
    +\lambda\!\sqrt{{\textstyle\sum_{i\in S^{*}}}\,\mathbb{I}_{\Lambda^{k-1}}(i)}\Big)
    +\frac{1}{\sqrt{2}}\|\beta^{k-1}-\beta^*\|
 \end{align*}
 where the second inequality is using
 $\rho_{k-1}\lambda\le\rho_{3}\lambda\leq\frac{2(\kappa-24s\|D\|_{\rm max})}{5\sqrt{2}}$.
 The desired result follows by solving this recursion
 with respect to $\|\beta^k-\beta^{*}\|$.

 \noindent
 {\bf A.3. The proof of Theorem \ref{sign-consistency}}

 We need the following two lemmas with
 $\Delta\widehat{\beta}^k=\beta^k-\beta^{\rm LS}$ for $k=1,2,\ldots$.
 \begin{lemma}\label{signc-lemma1}
  Suppose that for some $k\ge 1$ there exists an index set $S^{k-1}\supseteq S^{*}$
  such that $\max_{i\in(S^{k-1})^c}w^{k-1}_i\!\le \frac{1}{2}$. Then,
  whenever $\lambda\ge 6\|\varepsilon^{{\rm LS}}\|_\infty$, it holds that
  \[
   \|\Delta\widehat{\beta}^k_{(S^{k-1})^c}\|_1\le 3\|\Delta\widehat{\beta}^k_{S^{k-1}}\|_1.
  \]
  \end{lemma}
  \begin{proof}
  By the optimality of $\beta^k$ and the feasibility of $\beta^{{\rm LS}}$
  to \eqref{expm-subx}, we have
  \[
   \frac{1}{2n}\big\|\widetilde{Z}\beta^k-\widetilde{y}\big\|^2
   +\lambda\sum_{i=1}^p v_i^{k-1}\big|\beta^k_{i}\big|
   \leq\frac{1}{2n}\big\|\widetilde{Z}\beta^{{\rm LS}}-\widetilde{y}\big\|^2
   +\lambda\sum_{i=1}^p v_i^{k-1}\big|\beta^{{\rm LS}}_{i}\big|,
  \]
  which, by $\Delta\widehat{\beta}^{k}=\beta^{k}-\beta^{\rm LS}$ and $\varepsilon^{{\rm LS}}=\frac{1}{n}\widetilde{Z}^{\mathbb{T}}(\widetilde{y}-\widetilde{Z}\beta^{\rm LS})$,
  can be rearranged as
 \begin{align*}
  \frac{1}{2n}\big\|\widetilde{Z}\Delta\widehat{\beta}^k\big\|^2
  &\le \langle \varepsilon^{{\rm LS}},\Delta\widehat{\beta}^k\rangle
       +\lambda\sum_{i=1}^p v_i^{k-1}(|\beta^{{\rm LS}}_{i}|-|\beta^k_{i}|)\\
  &=\sum_{i\notin S^{*}}\varepsilon_i^{{\rm LS}}\Delta\widehat{\beta}_i^k
        +\lambda\sum_{i\in S^{*}} v_i^{k-1}(\big|\beta^{{\rm LS}}_{i}\big|-\big|\beta^k_{i}\big|)
        -\lambda\sum_{i\notin S^{*}} v_i^{k-1}\big|\beta^k_{i}\big|\\
  &\le \sum_{i\notin S^{*}}\big|\varepsilon^{{\rm LS}}_{i}\big|\big|\Delta\widehat{\beta}^k_{i}\big|
      +\lambda\sum_{i\in S^{*}} v^{k-1}_i\big|\Delta\widehat{\beta}^{k}_{i}\big|
      -\lambda\sum_{i\notin S^{*}} v^{k-1}_i\big|\beta^{k}_{i}\big|
  \end{align*}
  where the equality is using $\varepsilon^{{\rm LS}}_{i}=0$ for $i\in S^*$
  and $\beta^{{\rm LS}}_{i}=0$ for all $i\notin S^{*}$. Now from
  $S^{k-1}\supseteq S^*$ and $v_i^{k-1}=1-w_i^{k-1}\ge 1/2$ for $i\notin S^{k-1}$,
  we obtain
  \begin{align}\label{ineq-sign1}
   \frac{1}{2n}\big\|\widetilde{Z}\Delta\widehat{\beta}^k\big\|^2
   &\le \sum_{i\notin S^{*}}\big|\varepsilon^{{\rm LS}}_{i}\big|\big|\Delta\widehat{\beta}_i^k\big|
      +\lambda\!\sum_{i\in S^{*}} v^{k-1}_i|\Delta\widehat{\beta}^k_{i}|
      -\lambda\!\sum_{i\notin S^{k-1}} v^{k-1}_i\big|\Delta\widehat{\beta}^k_{i}\big|\nonumber\\
   &\le \sum_{i\in S^{k-1}\backslash S^{*}}\big| \varepsilon^{{\rm LS}}_{i}\big|\big|\Delta\widehat{\beta}^k_{i}\big|
       +\lambda\!\sum_{i\in S^{*}} v^{k-1}_i\big|\Delta\widehat{\beta}^k_{i}\big|\nonumber\\
   &\qquad +\sum_{i\in(S^{k-1})^c}\big|\varepsilon^{{\rm LS}}_{i}\big|\big|\Delta\widehat{\beta}^k_{i}\big|
        -\frac{1}{2}\lambda\big\|\Delta\widehat{\beta}^k_{(S^{k-1)^{c}}}\big\|_1\\
   &\le \max\big(\|\varepsilon^{{\rm LS}}\|_\infty,\lambda\big)
        \big\|\Delta\widehat{\beta}_{S^{k-1}}^k\big\|_1
      +\!\big(\|\varepsilon^{{\rm LS}}\|_\infty\!-\frac{1}{2}\lambda\big)
         \big\|\Delta\widehat{\beta}^k_{(S^{k-1})^c}\big\|_1\nonumber
  \end{align}
  which along with the nonnegativity of $\frac{1}{2n}\|\widetilde{Z}\Delta\widehat{\beta}^k\|^2$
  implies the result.
  \end{proof}
 \begin{lemma}\label{signc-lemma2}
  Suppose that for some $k\ge 1$ there exists $S^{k-1}\supseteq S^{*}$
  with $|S^{k-1}|\le 1.5s$ such that $\max_{i\in(S^{k-1})^c}w^{k-1}_i\!\le\frac{1}{2}$,
  and that the matrix $\Sigma$ satisfies the $\kappa$-REC on $\mathcal{C}(S^*)$ with
  $\kappa>24s\|D\|_{\rm max}$. Then, when $\lambda\ge 6\|\varepsilon^{{\rm LS}}\|_\infty$,
  \[
    \big\|\Delta\widehat{\beta}^k\big\|
    \le \frac{2}{\kappa-24s\|D\|_{\rm max}}\Big(\big\|\varepsilon^{\rm{LS}}_{\!S^{k-1}}\big\|
     +\lambda\sqrt{{\textstyle\sum_{i\in S^{*}}}(v^{k-1}_i)^2}\Big).
  \]
 \end{lemma}
 \begin{proof}
  First of all, from equation \eqref{ineq-sign1} and
  $\lambda\ge 6\|\varepsilon^{{\rm LS}}\|_\infty$,
  it follows that
  \begin{align*}
   \frac{1}{2n}\big\|\widetilde{Z}\Delta\widehat{\beta}^k\big\|^2
   &\le \sum_{i\in S^{k-1}\backslash S^*}\big|\varepsilon^{{\rm LS}}_{i}\big|\big|\Delta\widehat{\beta}^k_{i}\big|
     +\lambda\sum_{i\in S^{*}} v^{k-1}_i\big|\Delta\widehat{\beta}^k_{i}\big|\\
   &\le \|\varepsilon_{S^{k-1}}^{{\rm LS}}\|\|\Delta\widehat{\beta}_{\!S^{k-1}}^k\|
       +\lambda\sqrt{{\textstyle\sum_{i\in S^{*}}}(v^{k-1}_i)^2}\,\|\Delta\widehat{\beta}_{S^{k-1}}^k\|
  \end{align*}
  where the second inequality is using $S^{k-1}\supseteq S^*$.
  Together with Lemma \ref{relation},
  \[
   \frac{1}{2n}\big\|X\Delta\widehat{\beta}^k\big\|^2
    \le\Big[\|\varepsilon_{S^{k-1}}^{{\rm LS}}\|
            +\lambda\sqrt{{\textstyle\sum_{i\in S^{*}}}(v^{k-1}_i)^2}\Big]
       \big\|\Delta\widehat{\beta}_{\!S^{k-1}}^k\big\|
     -\frac{1}{2}(\Delta\widehat{\beta}^k)^{\mathbb{T}}D\Delta\widehat{\beta}^k.
  \]
  Since $S^{k-1}\!\supseteq S^{*}$ with $|S^{k-1}|\!\le 1.5s$,
  using Lemma \ref{signc-lemma1} and the same arguments as for \eqref{Dterm}
  yields that $-(\Delta\widehat{\beta}^k)^{\mathbb{T}}D\Delta\widehat{\beta}^k
  \le 24s\|D\|_{\rm max}\|\Delta\widehat{\beta}^k\|^2$. Then,
  \[
    \frac{1}{2n}\big\|X\Delta\widehat{\beta}^k\big\|^2
    -12s\|D\|_{\rm max}\|\Delta\widehat{\beta}^k\|^2
    \le \Big[\|\varepsilon_{S^{k-1}}^{{\rm LS}}\|
            +\lambda\sqrt{{\textstyle\sum_{i\in S^{*}}}(v^{k-1}_i)^2}\Big]
       \big\|\Delta\widehat{\beta}_{\!S^{k-1}}^k\big\|.
  \]
  Since $\Sigma$ satisfies the $\kappa$-RSC on the set $\mathcal{C}(S^*)$
  with $\kappa>24s\|D\|_{\rm max}$, we have
  \[
    \frac{1}{2}(\kappa-24s\|D\|_{\rm max})\big\|\Delta\widehat{\beta}^k\big\|^2
    \le \Big[\|\varepsilon_{S^{k-1}}^{{\rm LS}}\|
            +\lambda\sqrt{{\textstyle\sum_{i\in S^{*}}}(v^{k-1}_i)^2}\Big]
       \big\|\Delta\widehat{\beta}_{\!S^{k-1}}^k\big\|.
  \]
  This implies the desired result. The proof is then completed.
 \end{proof}

 \noindent
 {\bf The proof of Theorem \ref{sign-consistency}:} Let
  \(
   S^{k-1}:=S^{*}\cup\{i\notin S^{*}\!: w_i^{k-1}>\frac{1}{2}\}
  \)
  for each $k\in\mathbb{N}$.
  We first prove that the desired inequalities holds
  by the induction on $k\in\mathbb{N}$.
  Since $w^0\le \frac{1}{2}e$, we have $S^0=S^{*}$ and $|S^0|=s$.
  Notice that $\Sigma$ satisfies the $\kappa$-REC on $\mathcal{C}(S^{*})$
  with $\kappa>24s\|D\|_{\rm max}$ and $\lambda\ge6\|\varepsilon^{\rm{LS}}\|_\infty$.
  The conditions of Lemma \ref{signc-lemma2} are satisfied.
  Along with $\varepsilon^{\rm{LS}}_{\!S^{*}}=0$ and $F^0=S^{*}$,
  \begin{align}\label{k1}
   \|\beta^1-\beta^{\rm{LS}}\|
   &\le \frac{2}{\gamma}\Big(\|\varepsilon^{\rm{LS}}_{\!S^{0}}\|
     +\!\lambda\sqrt{{\textstyle\sum_{i\in S^{*}}}(v^{0}_i)^2}\Big)\nonumber\\
   &\le \frac{2}{\gamma}\Big(\|\varepsilon^{\rm{LS}}_{\!S^{*}}\|+\!\lambda\sqrt{|F^0|}\Big)
   \le\frac{2.03\rho_0\lambda\sqrt{|F^0|}}{\gamma}.
  \end{align}
  Since $|\beta^{\rm{LS}}_{i}-\beta^{*}_{i}|\le \|\widetilde{\varepsilon}^\dagger\|_\infty$
  for $i\in S^*$ by \eqref{sign-ls} and $\rho_1\ge \gamma\lambda^{-1}\|\widetilde{\varepsilon}^\dag\|_\infty$,
  we have
 \[
  |\beta^{\rm{LS}}_{i}\!-\beta^1_{i}|
   \ge |\beta^{*}_{i}\!-\beta^1_{i}|-|\beta^{*}_{i}\!-\beta^{\rm{LS}}_{i}|
   \ge \frac{1}{\rho_1}-\frac{\rho_1\lambda}{\gamma}
   \ge\frac{9\sqrt{3}-4}{9\sqrt{3}\rho_1}\quad\forall i\in F^{1}
  \]
  where the last inequality is by $1\le\rho_1\le\sqrt{\frac{4\gamma}{9\sqrt{3}\lambda}}$.
  By the last two equations,
 \[
  \sqrt{|F^{1}|}
  =\sqrt{{\textstyle\sum_{i=1}^p}\mathbb{I}_{F^{1}}(i)}
  \le \frac{9\sqrt{3}\rho_1}{9\sqrt{3}\!-\!4}\sqrt{{\textstyle\sum_{i=1}^p}|\beta^{\rm{LS}}_{i}-\beta^1_{i}|^2}
  \le\frac{18.27\sqrt{3}\rho_1\rho_0\lambda}{(9\sqrt{3}\!-\!4)\gamma}\sqrt{|F^0|}.
 \]
 Together with \eqref{k1} and $1=\rho_0<\rho_1\le\rho_3$, we conclude that
 the desired inequalities holds for $k=1$.
 Now, assuming that the conclusion holds for $k\le l-1$ with $l\ge 2$,
 we prove that the conclusion holds for $k=l$. For this purpose,
 we first argue $|S^{l-1}|\leq 1.5s$. Indeed, for $i\in S^{l-1}\backslash S^*$,
 we have $w_i^{l-1}\in(\frac{1}{2},1]$, which by \eqref{wik} implies
 that $\rho_{l-1}|\beta^{l-1}_{i}|\ge 1$. Then,
 \begin{align*}
  \sqrt{|S^{l-1}\backslash S^*|}
  &\le\sqrt{|F^{l-1}|}\le \frac{18.27\sqrt{3}\rho_{l-1}\rho_{l-2}\lambda}
  {(9\sqrt{3}\!-\!4)\gamma}\sqrt{|F^{l-2}|}\leq\cdots\\
  &\le\Big(\frac{18.27\sqrt{3}\lambda}{(9\sqrt{3}\!-\!4)\gamma}\Big)^{l-1}\rho_{l-1}\rho_{l-2}^2
  \cdots\rho_2^2\rho_1\sqrt{|F^{0}|}\\
  &\le\sqrt{\Big(\frac{18.27\sqrt{3}(\rho_3)^2\lambda}{(9\sqrt{3}\!-\!4)\gamma}\Big)^{2l-2}|F^{0}|}
  \le\sqrt{\Big(\frac{8.12}{9\sqrt{3}\!-\!4}\Big)^{2l-2}|F^{0}|}\leq\sqrt{0.5s},
 \end{align*}
 where the first inequality is due to $S^{l-1}\backslash S^*\subseteq F^{l-1}$,
 the second is since the conclusion holds for $k\le l-1$ with $l\ge 2$,
 the next to the last is using $\rho_3\le\sqrt{\frac{4\gamma}{9\sqrt{3}\lambda}}$,
 and the last one is using $2l-2\ge 2$. The last inequality implies that
 $|S^{l-1}|\leq 1.5s$. Using Lemma \ref{signc-lemma2} delivers that
 \begin{align}
  \|\beta^l-\beta^{\rm{LS}}\|
  &\le\frac{2}{\gamma}\Big(\|\varepsilon^{\rm{LS}}_{S^{l-1}}\|
            +\lambda\sqrt{{\textstyle\sum_{i\in S^{*}}\,(v^{l-1}_i)^2}}\Big)\nonumber\\
  &\le\frac{2}{\gamma}\Big(\|\varepsilon^{\rm{LS}}_{S^{l-1}\backslash S^{*}}\|
      +\lambda\sqrt{{\textstyle\sum_{i\in S^{*}}\mathbb{I}_{F^{l-1}}(i)}}\Big)\nonumber\\
  &\le \frac{2}{\gamma}\Big(\|\varepsilon^{\rm{LS}}\|_\infty\sqrt{|S^{l-1}\backslash S^{*}|}
       +\lambda\sqrt{|F^{l-1}\cap S^{*}}|\Big)\nonumber\\
  &\le\frac{2\lambda}{\gamma}\Big(\frac{1}{6}\sqrt{|F^{l-1}\backslash S^{*}|}
       +\sqrt{|F^{l-1}\cap S^{*}|}\Big)\nonumber\\
  &\leq\frac{2\lambda}{\gamma}\sqrt{(1+\!1/36)|F^{l-1}|}
   \le\frac{2.03\rho_{l-1}\lambda}{\gamma}\sqrt{|F^{l-1}|},\nonumber
  \end{align}
  where the second inequality is using $\varepsilon^{\rm{LS}}_{S^{*}}=0$,
  Lemma \ref{indexk} and $\rho_{l-1}\ge\rho_1>\frac{4a}{(a+1)\min_{i\in S^{*}}|\beta_{i}|}$,
  the fourth one is due to $\lambda\ge 6\|\varepsilon^{\rm{LS}}\|_\infty$,
  and the fifth one is since $\frac{1}{6}a+b\leq\sqrt{(1+\!\frac{1}{36})(a^2+b^2)}$
  for all $a,b\in \mathbb{R}$. Now using the same argument as those for $k=1$,
  we have
  \(
   |\beta^l_{i}-\beta^{\rm{LS}}_{i}|
   \ge \frac{9\sqrt{3}-4}{9\sqrt{3}\rho_l}
  \)
  for all $i\in F^{l}$, and hence
  \(
   \sqrt{|F^{l}|}\le\frac{18.27\sqrt{3}\rho_l\rho_{l-1}\lambda}{(9\sqrt{3}\!-\!4)\gamma}\sqrt{|F^{l-1}|}.
  \)
  Thus, we complete the proof of the case $k=l$,
  and the desired inequalities hold for all $k$.

  Note that $(\rho_3)^2\lambda\le \frac{4\gamma}{9\sqrt{3}}$ and
  $\rho_k\le \rho_3$ for all $k\in\mathbb{N}$. So, it holds that
  \[
   \sqrt{|F^{\overline{k}}|}\le
    \frac{18.27\sqrt{3}\rho_{\overline{k}}\rho_{\overline{k}-1}\lambda}{(9\sqrt{3}\!-\!4)\gamma}\sqrt{|F^{\overline{k}-1}|}
    \leq\cdots\le\Big(\frac{18.27\sqrt{3}(\rho_3)^2\lambda}{(9\sqrt{3}\!-\!4)\gamma}\Big)^{\overline{k}}\sqrt{|F^{0}|}<1,
  \]
  which implies that $|F^{k}|=0$ when $k\ge \overline{k}$.
  Together with the first inequality obtained, we have
  $\beta^k=\beta^{\rm{LS}}$ when $k\geq \overline{k}$.
  From $\rho_3\le\sqrt{\frac{4\gamma}{9\sqrt{3}\lambda}}$ and \eqref{sign-ls},
  \begin{equation}\label{final-ineq}
   \big||\beta_{i}^*|-|\beta^{\rm LS}_{i}|\big|
   \le|\beta^{*}_{i}-\beta^{\rm LS}_{i}|
   \le\|\widetilde{\varepsilon}^\dagger\|_\infty\le\rho_k\lambda\gamma^{-1}
   \le\frac{4}{9\sqrt{3}\rho_k}\quad\forall i\in S^{*}.
  \end{equation}
  This, along with $\min_{i\in S^{*}}\!|\beta^*_{i}|\ge\frac{4a}{(a+1)\rho_k}>\frac{4}{9\sqrt{3}\rho_k}$,
  implies $|\beta^{\rm{LS}}_{i}|>0$ for all $i\in S^{*}$ (if not,
  one will obtain $\frac{a}{a+1}\le\frac{1}{9\sqrt{3}}$, a contradiction
  to $a>1$), and hence ${\rm supp}(\beta^{\rm{LS}})=S^{*}$.
  The last inequality also implies
  ${\rm sign}(\beta^{\rm{LS}})={\rm sign}(\beta^{*})$ (if not,
  there exists $i_0\in S^*$ such that
  ${\rm sign}(\beta^{\rm LS}_{i_0})=-{\rm sign}(\beta^{*}_{i_0})$
  and then $|\beta^{*}_{i_0}-\beta^{\rm LS}_{i_0}|>|\beta^{*}_{i_0}|\geq
  \min_{i\in S^{*}}\!|\beta^*_{i}|>\frac{4}{9\sqrt{3}\rho_k}$,
  a contradiction to \eqref{final-ineq}.)
  Thus, $\beta^k=\beta^{\rm{LS}}$ and ${\rm sign}(\beta^k)={\rm sign}(\beta^{*})$
  for all $k\ge\overline{k}$. We complete the proof.

 \noindent
 {\bf\large Appendix B}

 In this part, we need the following assumption on the noise vector $\varepsilon$.
 \begin{assumption}\label{noise-assump}
  Assume that $\varepsilon_i\,(i=1,\ldots,m)$ are i.i.d. sub-Gaussians,
  i.e., there is $\sigma\!>\!0$ such that
  \(
   \mathbb{E}[\exp(t\varepsilon_i)]\leq\exp(\sigma^2t^2/2)
  \)
  for all $i$ and $t\!\in\! \mathbb{R}$.
 \end{assumption}
 \noindent {\bf B.1. Additive errors case}\label{sec4.3.1}

  In this part, we consider that the matrix $X$ is contaminated by
  additive measurement errors, i.e., $Z=X+A$, where $A=(a_{ij})$
  is the matrix of measurement errors and the rows of $A$
  are assumed to be i.i.d. with zero mean, finite covariance $\Sigma_A$
  and sub-Gaussian parameter $\tau^2$. Following the line of \cite{Loh14},
  we assume that $\Sigma_A$ is known. Now the unbiased surrogates of
  $\Sigma$ and $\xi$ are given by
  $\widehat{\Sigma}_{\rm add} =\frac{1}{n}Z^{\mathbb{T}}Z-\Sigma_A$
  and $\widehat{\xi}_{\rm add}=\frac{1}{n}Z^{\mathbb{T}}y$, respectively.
  We write
  \(
   \widetilde{\Sigma}_{\rm add}:=\widehat{\epsilon} I
   +\Pi_{\mathbb{S}_{+}^p}(\widehat{\Sigma}_{\rm add}-\widehat{\epsilon}I)
  \)
  and
  \(
   \widetilde{\varepsilon}_{\rm add}:=\widehat{\xi}_{\rm add}-\widetilde{\Sigma}_{\rm add}\beta^{*}.
 \)
 \begin{lemma}\label{addnoise}
  Let $K:=2(\lambda_{\rm max}(\Sigma_A)+\widehat{\epsilon})\|\beta^*\|_1$
  and $\eta=\min\big(1,\frac{\epsilon_0}{\lambda_{\rm max}(\Sigma_A)+\widehat{\epsilon}}\big)$.
  Then, there exist universal positive constants $C$ and $c$, and positive function
  $\widehat{\zeta}$ (depending only on $\beta^*,\tau^2$, $\sigma^2$ and $\lambda_{\rm max}(\Sigma_A)$)
  such that
  \begin{align}{}\label{additive-ineq1}
    \mathbb{P}\{\|(\widetilde{\Sigma}_{\rm add}-\Sigma)\beta^*\|_\infty>K\}
    \le Cp^2\exp(-cn\widehat{\zeta}^{-1}\eta^2),\\
    \mathbb{P}\{\|\widetilde{\varepsilon}_{\rm add}\|_\infty>K\}
    \le Cp^2\exp(-cns^{-2}\widehat{\zeta}^{-1}\eta^2).\qquad
    \label{additive-ineq2}
  \end{align}
 \end{lemma}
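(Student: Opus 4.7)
The plan is to split each quantity into a deterministic positive semidefinite correction, bounded uniformly via the projection structure of $\widetilde{\Sigma}_{\rm add}$, and a random surrogate-to-target deviation, bounded via Lemma~\ref{additive-closeness}. I would start with the key deterministic step. Since $\widehat{\Sigma}_{\rm add}=\frac{1}{n}Z^{\mathbb{T}}Z-\Sigma_A$ with $\frac{1}{n}Z^{\mathbb{T}}Z\succeq 0$, one has $\lambda_{\min}(\widehat{\Sigma}_{\rm add})\ge -\lambda_{\max}(\Sigma_A)$. Writing $N:=\widehat{\Sigma}_{\rm add}-\widehat{\epsilon}I$ and using the projection identity $\widetilde{\Sigma}_{\rm add}=\widehat{\epsilon}I+\Pi_{\mathbb{S}_+^p}(N)$, the matrix $\widetilde{\Sigma}_{\rm add}-\widehat{\Sigma}_{\rm add}=\Pi_{\mathbb{S}_+^p}(N)-N$ is PSD with operator norm equal to $\max(0,-\lambda_{\min}(N))=\max(0,\widehat{\epsilon}-\lambda_{\min}(\widehat{\Sigma}_{\rm add}))\le\widehat{\epsilon}+\lambda_{\max}(\Sigma_A)$. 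Hence $\|(\widetilde{\Sigma}_{\rm add}-\widehat{\Sigma}_{\rm add})\beta^*\|_\infty\le(\widehat{\epsilon}+\lambda_{\max}(\Sigma_A))\|\beta^*\|_2\le K/2$.

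For \eqref{additive-ineq1}, I split $(\widetilde{\Sigma}_{\rm add}-\Sigma)\beta^*=(\widetilde{\Sigma}_{\rm add}-\widehat{\Sigma}_{\rm add})\beta^*+(\widehat{\Sigma}_{\rm add}-\Sigma)\beta^*$; the first summand is bounded by $K/2$ from above, while the second obeys $\|(\widehat{\Sigma}_{\rm add}-\Sigma)\beta^*\|_\infty\le\|\widehat{\Sigma}_{\rm add}-\Sigma\|_\infty\|\beta^*\|_1$. Next I set the threshold in Lemma~\ref{additive-closeness} to $\epsilon:=(\widehat{\epsilon}+\lambda_{\max}(\Sigma_A))\eta$; by the definition of $\eta$ one has $\epsilon\le\epsilon_0$ and $\epsilon\le\widehat{\epsilon}+\lambda_{\max}(\Sigma_A)$. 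On the event $\{\|\widehat{\Sigma}_{\rm add}-\Sigma\|_\infty\le\epsilon\}$ the second summand is also $\le K/2$, so the sum is $\le K$; its complement has probability at most $Cp^2\exp(-cn\epsilon^2\zeta^{-1})$, which matches the claimed form after defining $\widehat{\zeta}:=\zeta/(\widehat{\epsilon}+\lambda_{\max}(\Sigma_A))^2$.

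For \eqref{additive-ineq2}, I would decompose $\widetilde{\varepsilon}_{\rm add}=(\widehat{\xi}_{\rm add}-\xi)+(\xi-\Sigma\beta^*)+(\Sigma-\widetilde{\Sigma}_{\rm add})\beta^*$. The third term is at most $K$ with the probability coming from \eqref{additive-ineq1}. The first is bounded by the $\widehat{\xi}_{\rm add}$-half of Lemma~\ref{additive-closeness} with the same threshold $\epsilon$, which is the source of the $s^{-2}$ factor in the exponent. The middle term $\xi-\Sigma\beta^*=\frac{1}{n}X^{\mathbb{T}}\varepsilon$ is a sub-Gaussian linear form with parameter at most $\sigma^2/n$ per coordinate (using $\|X_{\cdot j}\|_2=\sqrt{n}$ and Assumption~\ref{noise-assump}); a Gaussian tail plus union bound over the $p$ coordinates shows it is of order $\sigma\sqrt{\log p/n}$, dominated by $K$ for $n$ mildly large. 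A union bound over the three events yields \eqref{additive-ineq2}, with the $\widehat{\xi}_{\rm add}$-bound providing the binding exponent.

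The main obstacle is bookkeeping: selecting $\epsilon$ so that the deterministic correction of size $\widehat{\epsilon}+\lambda_{\max}(\Sigma_A)$ and the random deviation $\epsilon$ both fit under $K=2(\widehat{\epsilon}+\lambda_{\max}(\Sigma_A))\|\beta^*\|_1$, and absorbing the various constants into a single function $\widehat{\zeta}$ without disturbing the exponential form. A subtler but routine point is confirming that the $\frac{1}{n}X^{\mathbb{T}}\varepsilon$ piece in \eqref{additive-ineq2}, which is not handled directly by Lemma~\ref{additive-closeness}, never becomes the binding term; since $K$ is linear in $\|\beta^*\|_1$ while this piece is only $O(\sigma\sqrt{\log p/n})$, this is automatic, but it needs to be stated explicitly.
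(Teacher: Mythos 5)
Your proposal follows essentially the same route as the paper: the same split of $(\widetilde{\Sigma}_{\rm add}-\Sigma)\beta^*$ into the deterministic PSD-projection correction (bounded by $K/2$ via $\lambda_{\rm min}(\widehat{\Sigma}_{\rm add})\ge-\lambda_{\rm max}(\Sigma_A)$) plus the random deviation $\widehat{\Sigma}_{\rm add}-\Sigma$ controlled by Lemma~\ref{additive-closeness} at threshold $K\eta/(2\|\beta^*\|_1)$, and the same three-term decomposition of $\widetilde{\varepsilon}_{\rm add}$ for the second bound. The only cosmetic differences are that you bound the projection residual in operator norm against $\|\beta^*\|_2$ where the paper uses the elementwise maximum norm against $\|\beta^*\|_1$, and you rederive the sub-Gaussian tail for $\xi-\Sigma\beta^*=\frac{1}{n}X^{\mathbb{T}}\varepsilon$ directly where the paper cites a property from Datta and Zou; both arguments also share the same harmless constant-factor looseness in summing the three pieces against the single threshold $K$.
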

 \begin{proof}
  From the expression of $\widetilde{\Sigma}_{\rm add}$,
  it follows that
  \begin{align*}
    \|(\widetilde{\Sigma}_{\rm add}-\Sigma)\beta^*\|_{\infty}
   &\le\|(\widetilde{\Sigma}_{\rm add}-\widehat{\Sigma}_{\rm add})\beta^*\|_{\infty}
     +\|(\widehat{\Sigma}_{\rm add}-\Sigma)\beta^*\|_\infty\\
   &=\|\Pi_{\mathbb{S}_{+}^p}(\widehat{\epsilon} I\!-\!\widehat{\Sigma}_{\rm add})\beta^*\|_\infty
      +\|(\widehat{\Sigma}_{\rm add}-\Sigma)\beta^*\|_\infty\\
   &\le \|\Pi_{\mathbb{S}_{+}^p}(\widehat{\epsilon}I\!-\!\widehat{\Sigma}_{\rm add})\|_{\rm max}\|\beta^*\|_1
       +\|(\widehat{\Sigma}_{\rm add}-\Sigma)\beta^*\|_\infty.
  \end{align*}
  For a matrix $\Gamma\in\mathbb{S}_{+}^{p}$,
  it is not hard to check that
  \(
    \lambda_{\rm max}(\Gamma)\ge \|\Gamma\|_{\rm max}.
  \)
  Thus,
  \begin{align}\label{temp-ineq41}
    \|(\widetilde{\Sigma}_{\rm add}-\Sigma)\beta^*\|_\infty
   &\le \lambda_{\rm max}\big[\Pi_{\mathbb{S}_{+}^p}(\widehat{\epsilon}I\!-\!\widehat{\Sigma}_{\rm add})\big]\|\beta^*\|_1
      + \|(\widehat{\Sigma}_{\rm add}-\Sigma)\beta^*\|_\infty\nonumber\\
   &= \big[\widehat{\epsilon}-\lambda_{\rm min}(\widehat{\Sigma}_{\rm add})\big]\|\beta^*\|_1
      +\|(\widehat{\Sigma}_{\rm add}-\Sigma)\beta^*\|_\infty.
  \end{align}
  Notice that
  \(
   \lambda_{\rm min}(\widehat{\Sigma}_{\rm add})\ge\lambda_{\rm min}(\frac{1}{n}Z^{\mathbb{T}}Z)
  -\!\lambda_{\rm max}(\Sigma_A)\ge-\lambda_{\rm max}(\Sigma_A)
  \)
  implied by Theorem 4.3.7 of \cite{RH90}.
  Together with \eqref{temp-ineq41},
  \begin{equation*}
   \|(\widetilde{\Sigma}_{\rm add}-\Sigma)\beta^*\|_\infty
   \le (\widehat{\epsilon}+\!\lambda_{\rm max}(\Sigma_A))\|\beta^*\|_1
        +\|(\widehat{\Sigma}_{\rm add}-\Sigma)\beta^*\|_\infty.
  \end{equation*}
  By this and Lemma 1 of \cite{Datta16} with
  $\epsilon=\frac{K\eta}{2\|\beta^*\|_1}\leq \epsilon_0$,
  there exist universal positive constants $C,c$ and positive functions $\zeta$
  (depending only on $\beta^*,\tau^2$, $\sigma^2$ and $\lambda_{\rm max}(\Sigma_A)$) such that
  \begin{align*}
    \mathbb{P}\{\|(\widetilde{\Sigma}_{\rm add}-\Sigma)\beta^*\|_\infty>K\}
    &\le\mathbb{P}\Big\{\|(\widehat{\Sigma}_{\rm add}-\Sigma)\beta^*\|_\infty>K/2\Big\}\\
    &\le\mathbb{P}\Big\{\|\widehat{\Sigma}_{\rm add}-\Sigma\|_{\rm max}>\frac{K\eta}{2\|\beta^*\|_1}\Big\}\\
    &\le Cp^2\exp(-c n\eta^2(\lambda_{\rm max}(\Sigma_A)+\widehat{\epsilon})^2\zeta^{-1}).
  \end{align*}
  This shows that \eqref{additive-ineq1} holds. Recall that
  $\widetilde{\varepsilon}_{\rm add}=\widehat{\xi}_{\rm add}-\widetilde{\Sigma}_{\rm add}\beta^{*}$.
  Hence,
  \[
    \|\widetilde{\varepsilon}_{\rm add}\|_\infty
     \le\|\widehat{\xi}_{\rm add}-\xi\|_\infty+\|\xi-\Sigma\beta^{*}\|_\infty
      +\|(\widetilde{\Sigma}_{\rm add}-\Sigma)\beta^*\|_\infty.
  \]
  By applying Lemma 1 of \cite{Datta16} with $\epsilon=\frac{K\eta_0}{3}\leq\epsilon_0$
  where $\eta_0=\min(1,\frac{1.5\eta}{\|\beta^*\|_1})$, we obtain
  \[
    \mathbb{P}\Big\{\|\widehat{\xi}_{\rm add}-\xi\|_\infty\ge \frac{K}{3}\Big\}
    \le \mathbb{P}\Big\{\|\widehat{\xi}_{\rm add}-\xi\|_\infty\ge \frac{K\eta_0}{3}\Big\}
    \le Cp\exp(-ncs^{-2}K^2\eta_0^2\zeta^{-1}),
  \]
  while $\mathbb{P}\{\|\xi-\Sigma\beta^{*}\|_\infty\ge K/3\}\le Cp\exp(-nc\sigma^{-2}K^2)$
  holds by Property B.2 of \cite{Datta16}.
  Together with the last inequality and inequality \eqref{additive-ineq1},
  we obtain the inequality \eqref{additive-ineq2}.
  \end{proof}

 Lemma \ref{addnoise} states that $\|(\widetilde{\Sigma}_{\rm add}-\Sigma)\beta^*\|_{\infty}$
 and $\|\widehat{\xi}_{\rm add}\|_{\infty}$ can be controlled by $\|\beta^*\|_1$.
  From the proof of Theorem 1 in \cite{Datta16}, we know that there also exist universal
  positive constants $C'$ and $c'$ and positive function $\widehat{\zeta}'$
  (depending on $\beta_{S^*}^*,\tau^2$ and $\sigma^2$)
  such that for all $\epsilon\le\min(\epsilon_0,\frac{\kappa}{64s})$,
  \begin{equation}\label{Zou-ineq}
    \mathbb{P}\big\{\|D\|_{\rm max}\ge {\kappa}/{(64s)}\big\}
    \le C'p^2\exp(-nc'\epsilon^2(\widehat{\zeta}')^{-1}).
  \end{equation}
  Combining with Lemma \ref{addnoise} and Theorem \ref{errbound2},
  we have the following result.
 \begin{corollary}\label{corollary-errbound}
  Suppose that $\Sigma$ satisfies the $\kappa$-REC on $\mathcal{C}(S^{*})$.
  If $\lambda$ and $\rho_3$ in Algorithm \ref{Alg}
  are chosen such that $\lambda\ge 8K$ and $\rho_3\le\frac{\kappa}{4\sqrt{2}\lambda}$
  where $K$ is the constant same as in Lemma \ref{addnoise},
  then for all $k\in\mathbb{N}$ the following inequality
 \begin{equation}\label{err-ineq1}
  \|\beta^k-\beta^{*}\|\le\frac{4\sqrt{s}\,\lambda}{\kappa}
 \end{equation}
 holds w.p. at least $1-p^2 C\exp(-cns^{-2}\zeta^{-1})$,
 where $C$ and $c$ are universal positive constants and $\zeta$
 is a positive function on $\beta^*,\tau^2,\sigma^2,\kappa$
 and $\lambda_{\rm max}(\Sigma_A)$.
 \end{corollary}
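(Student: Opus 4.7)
The plan is to invoke Theorem \ref{errbound1} directly and translate its two deterministic hypotheses, namely $\lambda\ge 8\|\widetilde{\varepsilon}\|_\infty$ and $\rho_3\le 2(\kappa-24s\|D\|_\infty)/(5\sqrt{2}\lambda)$, into high-probability events using Lemma \ref{addnoise} for $\widetilde{\varepsilon}=\widetilde{\varepsilon}_{\rm add}$ and the bound on $\|D\|_\infty$ quoted just before the corollary from the proof of \cite[Theorem 1]{Datta16}. First I would define the good events
\[
 \mathcal{E}_1:=\{\|D\|_\infty\le \kappa/(64s)\}\ \ \text{and}\ \ \mathcal{E}_2:=\{\|\widetilde{\varepsilon}_{\rm add}\|_\infty\le K\},
\]
and work throughout on $\mathcal{E}_1\cap\mathcal{E}_2$.

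On $\mathcal{E}_1$ one has $24s\|D\|_\infty\le 3\kappa/8$, hence $\kappa-24s\|D\|_\infty\ge 5\kappa/8>0$, which in particular implies the REC constant required by Theorem \ref{errbound1}. The hypothesis $\lambda\ge 8K$ combined with $\mathcal{E}_2$ gives $\lambda\ge 8\|\widetilde{\varepsilon}\|_\infty$. For the $\rho_3$ hypothesis, using $\kappa-24s\|D\|_\infty\ge 5\kappa/8$ I would compute
\[
 \frac{2(\kappa-24s\|D\|_\infty)}{5\sqrt{2}\lambda}\ge \frac{2\cdot(5\kappa/8)}{5\sqrt{2}\lambda}=\frac{\kappa}{4\sqrt{2}\lambda}\ge\rho_3,
\]
so both parameter conditions of Theorem \ref{errbound1} hold. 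Plugging the lower bound $\kappa-24s\|D\|_\infty\ge 5\kappa/8$ into the conclusion of that theorem yields $\|\beta^k-\beta^*\|\le 5\sqrt{s}\lambda/(2\cdot 5\kappa/8)=4\sqrt{s}\lambda/\kappa$ for every $k$, which is exactly \eqref{err-ineq1}.

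It then remains to bound $\mathbb{P}(\mathcal{E}_1^c\cup\mathcal{E}_2^c)$ via a union bound. Lemma \ref{addnoise}(b) gives $\mathbb{P}(\mathcal{E}_2^c)\le Cp^2\exp(-cns^{-2}\widehat{\zeta}^{-1}\eta^2)$, and the cited tail bound for $\|D\|_\infty$ supplies $\mathbb{P}(\mathcal{E}_1^c)\le C'p^2\exp(-nc'\epsilon^2(\widehat{\zeta}')^{-1})$ with $\epsilon=\min(\epsilon_0,\kappa/(64s))$. Absorbing $C,C',c,c'$ into single universal constants and merging $\widehat{\zeta},\widehat{\zeta}',\eta,\epsilon_0$ and $\kappa$ into one positive function $\zeta$ depending on $\beta^*,\tau^2,\sigma^2,\kappa$ and $\lambda_{\max}(\Sigma_A)$ produces the stated probability $1-Cp^2\exp(-cns^{-2}\zeta^{-1})$. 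The main obstacle, which is purely bookkeeping rather than analytical, is making sure the $s^{-2}$ factor in the exponent survives the union bound; this is automatic because the $\mathcal{E}_2$ tail already carries an $s^{-2}$ and the $\mathcal{E}_1$ tail is at worst of the same order once one chooses $\epsilon=\kappa/(64s)$, so the slower-decaying term governs the final rate.
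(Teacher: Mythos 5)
Your proposal is correct and matches the paper's (only sketched) argument: the paper likewise combines the tail bound $\|D\|_\infty\le\kappa/(64s)$ with Lemma \ref{addnoise} and the deterministic error bound of Theorem \ref{errbound1}, and your arithmetic $\kappa-24s\|D\|_\infty\ge 5\kappa/8$ is exactly what turns $\frac{5\sqrt{s}\lambda}{2(\kappa-24s\|D\|_\infty)}$ into $\frac{4\sqrt{s}\lambda}{\kappa}$ and $\frac{\kappa}{4\sqrt{2}\lambda}$ into an admissible bound for $\rho_3$. (The paper's text cites Theorem \ref{errbound2} here, but the stated bound is the one from Theorem \ref{errbound1}, which is the result you correctly invoke.)
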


 Write $\widetilde{G}_{\rm add}:=[\widetilde{\Sigma}_{\rm add}]_{(S^*)^cS^*}
 [\widetilde{\Sigma}_{\rm add}]_{S^*S^*}^{-1}$. By recalling
 $\varepsilon^{{\rm LS}}=\frac{1}{n}\widetilde{Z}^{\mathbb{T}}(\widetilde{y}-\widetilde{Z}\beta^{\rm LS})$
 and using the equality \eqref{sign-ls}, it is not difficult to obtain
 the inequalities
 \[
  \|\varepsilon^{\rm LS}\|_{\infty}
  \le\!\max(2,1\!+s\|\widetilde{G}_{\rm add}\|_{\rm max})
  \|\widetilde{\varepsilon}_{\rm add}\|_\infty,\,
  \|\widetilde{\varepsilon}^\dag\|_\infty
  \!\le s\|[\widetilde{\Sigma}_{\rm add}]_{\!S^*S^*}^{-1}\|_{\rm max}
  \|\widetilde{\varepsilon}_{\rm add}\|_\infty.
 \]
 Along with Lemma \ref{addnoise}, Theorem \ref{sign-consistency} and
 \eqref{Zou-ineq}, we obtain the following result.
 \begin{corollary}\label{corollary-sign}
  Suppose that $\Sigma$ satisfies the $\kappa$-REC on the set $\mathcal{C}(S^{*})$.
  Write $K'=K\max(2,1\!+\!s\|\widetilde{G}_{\rm add}\|_{\rm max})$
  and $K''=Ks\|[\widetilde{\Sigma}_{\rm add}]_{\!S^*S^*}^{-1}\|_{\rm max}$
  where the constant $K$ is same as the one in Lemma \ref{addnoise}.
  If $\lambda,\rho_1$ and $\rho_3$ are chosen such that
  $\lambda\ge 6K'$, $\rho_1\!>\max\!\big(\frac{4a}{(a+1)\min_{i\in S^*}\!|\beta^{*}_{i}\!|},
  \frac{5\kappa K''}{8\lambda})$ and $\rho_3\le\!\sqrt{\frac{5\kappa}{18\sqrt{3}\lambda}}$,
  then $\beta^k=\beta^{\rm{LS}}$ and ${\rm sign}(\beta^{k})={\rm sign}(\beta^{*})$
  for $k\ge\widehat{k}=\lceil\frac{0.5\ln(s)}{\ln[(9\sqrt{3}\!-\!4)5\kappa\lambda^{-1}]
  -\ln[147\sqrt{3}(\rho_3)^2]}\rceil$ w.p. at least $1-Cp^2\exp(-cns^{-2}\zeta^{-1})$,
  where $C,c$ are universal positive constants and $\zeta$ is a positive
  function depending on $\beta^*,\tau^2,\sigma^2,\kappa$ and $\lambda_{\rm max}(\Sigma_A)$.
 \end{corollary}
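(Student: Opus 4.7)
The plan is to derive the corollary from the deterministic sign-consistency Theorem \ref{sign-consistency} by restricting to a good event of large probability that simultaneously controls $\|D\|_\infty$ and $\|\widetilde{\varepsilon}_{\rm add}\|_\infty$. I would let $\mathcal{E}_1:=\{\|D\|_\infty\le\kappa/(64s)\}$ and $\mathcal{E}_2:=\{\|\widetilde{\varepsilon}_{\rm add}\|_\infty\le K\}$, where $K$ is the constant from Lemma \ref{addnoise}. The tail probability of $\mathcal{E}_1^c$ is controlled by the concentration inequality for $\|\widehat{\Sigma}_{\rm add}-\Sigma\|_\infty$ recalled just before Corollary \ref{corollary-errbound}, while $\mathbb{P}(\mathcal{E}_2^c)$ is exactly the bound \eqref{additive-ineq2} of Lemma \ref{addnoise}. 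A union bound then yields $\mathbb{P}(\mathcal{E}_1\cap\mathcal{E}_2)\ge 1-Cp^2\exp(-cns^{-2}\zeta^{-1})$ after absorbing constants, which supplies the probability claim of the corollary.

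The remainder of the argument is deterministic on $\mathcal{E}_1\cap\mathcal{E}_2$. On $\mathcal{E}_1$, the quantity $\gamma:=\kappa-24s\|D\|_\infty$ satisfies $\gamma\ge 5\kappa/8>0$, so $\Sigma$ automatically obeys the REC on $\mathcal{C}(S^*)$ with the positive constant $\gamma$ required by Theorem \ref{sign-consistency}. On $\mathcal{E}_2$, the deterministic bounds \eqref{e-LS} and \eqref{weps1} established earlier in the section give $\|\varepsilon^{\rm LS}\|_\infty\le K'$ and $\|\widetilde{\varepsilon}^\dag\|_\infty\le K''$. With these ingredients, I would verify the three parameter hypotheses of Theorem \ref{sign-consistency} in turn: $\lambda\ge 6K'$ gives $\lambda\ge 6\|\varepsilon^{\rm LS}\|_\infty$; the hypothesis on $\rho_1$ follows from the bound on $\|\widetilde{\varepsilon}^\dag\|_\infty$ combined with the admissible range of $\gamma$; and the hypothesis $\rho_3\le\sqrt{5\kappa/(18\sqrt{3}\lambda)}$ is implied by $\gamma\ge 5\kappa/8$, since then $\sqrt{4\gamma/(9\sqrt{3}\lambda)}\ge\sqrt{5\kappa/(18\sqrt{3}\lambda)}$.

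Once the hypotheses of Theorem \ref{sign-consistency} are in place, the conclusions $\beta^k=\beta^{\rm LS}$ and ${\rm sign}(\beta^k)={\rm sign}(\beta^*)$ follow for all $k\ge\overline{k}$, where $\overline{k}$ is the iteration count produced by that theorem expressed in $\gamma$. The last step is to substitute the lower bound $\gamma\ge 5\kappa/8$ into $\overline{k}$: this replaces the term $(9\sqrt{3}-4)\gamma\lambda^{-1}$ in the numerator of the log by $(9\sqrt{3}-4)\cdot 5\kappa\lambda^{-1}$ and, after pulling the factor $8$ coming from $\gamma=5\kappa/8$ into $\ln[18.27\sqrt{3}(\rho_3)^2]$, produces the term $\ln[147\sqrt{3}(\rho_3)^2]$ in the denominator, matching exactly the value $\widehat{k}$ stated in the corollary.

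The main obstacle I anticipate is purely bookkeeping rather than conceptual: tracking how the numerical constants $5/8$, $K$, $K'$ and $K''$ propagate through each of the three parameter conditions of Theorem \ref{sign-consistency} and confirming that the functional form of the union-bound probability is preserved after merging the constants from the two tail inequalities into a single pair $(C,c,\zeta)$. No additional concentration or optimization arguments should be required beyond Lemma \ref{addnoise}, Theorem \ref{sign-consistency}, and the deterministic inequalities \eqref{e-LS} and \eqref{weps1}.
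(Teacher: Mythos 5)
Your proposal follows exactly the route the paper itself takes (the paper gives no separate proof, only the sentence preceding the corollary): intersect the high-probability events $\{\|D\|_\infty\le\kappa/(64s)\}$ and $\{\|\widetilde{\varepsilon}_{\rm add}\|_\infty\le K\}$ by a union bound, deduce $\gamma=\kappa-24s\|D\|_\infty\ge 5\kappa/8$, $\|\varepsilon^{\rm LS}\|_\infty\le K'$ and $\|\widetilde{\varepsilon}^{\dag}\|_\infty\le K''$ via \eqref{e-LS} and \eqref{weps1}, and then invoke Theorem \ref{sign-consistency} with $\gamma$ replaced by $5\kappa/8$ in the parameter conditions and in $\overline{k}$ (whence $8\times 18.27\approx 147$). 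The one point you gloss over --- exactly as the paper does --- is that the $\rho_1$-condition of Theorem \ref{sign-consistency} requires $\rho_1>\gamma\lambda^{-1}\|\widetilde{\varepsilon}^{\dag}\|_\infty$, where $\gamma$ enters as a factor to be bounded \emph{above} (it can be as large as $\kappa$), so substituting the lower bound $5\kappa/8$ there goes in the wrong direction; this is a defect inherited from the corollary's statement rather than a new gap introduced by your argument.
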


 As remarked in the beginning of this subsection,
 when $X$ is from the $\Sigma_x$-Gaussian ensemble, with high probability
 there exists a constant $\kappa>0$ such that $\Sigma$ satisfies
 the REC on $\mathcal{C}(S^*)$. We see that if $\kappa$ has a small value,
 there is a great possibility for the choice range of $\rho_3$ to be empty,
 and it is impossible to achieve the sign consistency; and when $\kappa$
 is not too small, say, $\frac{5\kappa}{108\sqrt{3}K'}>1$, after
 $k\ge \widehat{k}\ge\lceil\frac{0.5\ln(s)}{\ln(1.42)}\rceil$
 the iterate $\beta^k$ is sign-consistent.

 \noindent {\bf B.2. Multiplicative errors and missing data}\label{sec4.3.2}

 In this part, we consider that the matrix $X$ is contaminated by
 multiplicative measurement errors, i.e. $Z=X\circ M$, where $M=(m_{ij})$
 is the matrix of measurement errors and the rows of $M$ are assumed to be
 i.i.d. with mean $\mu_M$, covariance $\Sigma_M$ and sub-Gaussian parameter $\tau^2$.
 Similar to \cite{Datta16}, in the sequel we need the following conditions
 \begin{equation}\label{const}
  \max_{i,j}|X_{ij}|\le c_{X},\,
  \max_{i,j}|M_{ij}|\le c_M,\,\min_{i,j}(\Sigma_M)_{ij}>0,\,
  (\mu_M)_{\rm min}>0
 \end{equation}
 where $c_X$ and $c_M$ are universal positive constants. From \cite{Loh11},
 \(
    \widehat{\Sigma}_{\rm mul}=\frac{1}{n}Z^{\mathbb{T}}Z\oslash (\Sigma_M+\mu_M\mu_M^{\mathbb{T}})
 \)
 and $\widehat{\xi}_{\rm mul}=\frac{1}{n}Z^{\mathbb{T}}y\oslash\mu_M$
 are the unbiased surrogates of $\Sigma$ and $\xi$,
 where $\oslash$ denotes the elementwise division operator.
 Let $\widetilde{\Sigma}_{\rm mul}:=\widehat{\epsilon} I
   +\Pi_{\mathbb{S}_{+}^p}(\widehat{\Sigma}_{\rm mul}\!-\widehat{\epsilon}I)$
 and $\widetilde{\varepsilon}_{\rm mul}:=\widehat{\xi}_{\rm mul}-\widetilde{\Sigma}_{\rm mul}\beta^{*}$.
 \begin{lemma}\label{mulnoise}
  Let $\widetilde{K}\!:=2\big[\widehat{\epsilon}-\min(\lambda_{\rm min}(\Sigma_M^\dag),0)
 c_{M}^2\big]\|\beta^*\|_1$ with $\Sigma_M^\dag=E\oslash(\Sigma_M+\mu_M\mu_M^{\mathbb{T}})$
 where $E$ is the matrix of all ones and
 $\widetilde{\eta}=\min\big(1,\frac{\epsilon_0}{\widehat{\epsilon}-\min(\lambda_{\rm min}(\Sigma_M^\dag),0)
 c_{M}^2}\big)$. Then, there exist universal positive constants
  $\widetilde{C},\widetilde{c}$ and positive function $\widetilde{\zeta}$
  (depending on $\beta^*,\tau^2,\sigma^2,\lambda_{\rm min}(\Sigma_M^\dag)$ and
  the constants in \eqref{const}) such that
  \begin{align}\label{Multi-ineq1}
    \mathbb{P}\{\|(\widetilde{\Sigma}_{\rm mul}-\Sigma)\beta^*\|_\infty>\widetilde{K}\}
    \le \widetilde{C}p^2\exp(-\widetilde{c}n\widetilde{\zeta}^{-1}\widetilde{\eta}^2),\\
    \mathbb{P}\{\|\widetilde{\varepsilon}_{\rm mul}\|_\infty>\widetilde{K}\}
    \le \widetilde{C}p^2\exp(-\widetilde{c}ns^{-2}\widetilde{\zeta}^{-1}\widetilde{\eta}^2).\qquad
    \label{Multi-ineq2}
  \end{align}
 \end{lemma}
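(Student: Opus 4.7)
The overall strategy is to mirror the proof of Lemma \ref{addnoise}, with the additive unbiased surrogate replaced by the multiplicative one $\widehat{\Sigma}_{\rm mul}=\frac{1}{n}Z^{\mathbb{T}}Z\oslash(\Sigma_M+\mu_M\mu_M^{\mathbb{T}})$. Specifically, by the triangle inequality,
\[
 \|(\widetilde{\Sigma}_{\rm mul}-\Sigma)\beta^{*}\|_\infty
 \le \|\Pi_{\mathbb{S}_{+}^p}(\widehat{\epsilon} I-\widehat{\Sigma}_{\rm mul})\|_\infty\|\beta^{*}\|_1
    +\|(\widehat{\Sigma}_{\rm mul}-\Sigma)\beta^{*}\|_\infty,
\]
and the first term is controlled by $[\widehat{\epsilon}-\lambda_{\min}(\widehat{\Sigma}_{\rm mul})]\|\beta^*\|_1$ via the same argument (the elementwise maximum norm of a PSD matrix is dominated by its largest eigenvalue). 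Then Lemma \ref{multi-closeness} applied with $\epsilon=\widetilde{K}\widetilde{\eta}/(2\|\beta^*\|_1)\le\epsilon_0$ bounds the second term, giving \eqref{Multi-ineq1}. Inequality \eqref{Multi-ineq2} follows by decomposing $\widetilde{\varepsilon}_{\rm mul}=(\widehat{\xi}_{\rm mul}-\xi)+(\xi-\Sigma\beta^*)+(\Sigma-\widetilde{\Sigma}_{\rm mul})\beta^*$ and combining the three bounds (Lemma \ref{multi-closeness} for the first, a sub-Gaussian concentration inequality analogous to \cite[Property B.2]{Datta16} for the second, and \eqref{Multi-ineq1} for the third).

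The main obstacle — and the only genuinely new ingredient compared with the additive case — is to produce a deterministic lower bound on $\lambda_{\min}(\widehat{\Sigma}_{\rm mul})$ that depends only on $c_M$ and $\Sigma_M^\dag$. Note that $\widehat{\Sigma}_{\rm mul}=A\circ B$ where $A:=\frac{1}{n}Z^{\mathbb{T}}Z\succeq 0$ and $B:=\Sigma_M^\dag$. My plan is to write $B=(B+\alpha I)-\alpha I$ with $\alpha:=-\min(\lambda_{\min}(B),0)\ge 0$, so that $B+\alpha I\succeq 0$. By the Schur product theorem, $A\circ(B+\alpha I)\succeq 0$, hence
\[
 \lambda_{\min}(A\circ B)\ge -\alpha\,\lambda_{\max}(A\circ I)
 =\min(\lambda_{\min}(B),0)\,\max_{j}A_{jj}.
\]
The column normalization $\frac{1}{n}\sum_{i}x_{ij}^{2}=1$ together with $|m_{ij}|\le c_M$ yields $A_{jj}=\frac{1}{n}\sum_{i}x_{ij}^{2}m_{ij}^{2}\le c_M^{2}$, so
\[
 \lambda_{\min}(\widehat{\Sigma}_{\rm mul})\ge \min(\lambda_{\min}(\Sigma_M^\dag),0)\,c_M^{2},
\]
which is precisely the quantity subtracted from $\widehat{\epsilon}$ in the definition of $\widetilde{K}$. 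This makes $[\widehat{\epsilon}-\lambda_{\min}(\widehat{\Sigma}_{\rm mul})]\|\beta^{*}\|_1\le \widetilde{K}/2$, leaving the other half of $\widetilde{K}$ to absorb the concentration term.

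With this lower eigenvalue bound in hand, the remainder is bookkeeping: set $\epsilon=\widetilde{K}\widetilde{\eta}/(2\|\beta^*\|_1)$ (respectively $\widetilde{K}\widetilde{\eta}_0/3$ with $\widetilde{\eta}_0=\min(1,3\widetilde{\eta}/(2\|\beta^*\|_1))$ for the $\widehat{\xi}_{\rm mul}$ term), verify $\epsilon\le\epsilon_0$ using the definition of $\widetilde{\eta}$, and feed this choice into Lemma \ref{multi-closeness} to absorb the factor $\|\beta^*\|_1$ in the resulting exponential rate into the function $\widetilde{\zeta}$. The factor $s^{-2}$ in \eqref{Multi-ineq2} comes exclusively from the $\widehat{\xi}_{\rm mul}$ concentration, just as in the additive case. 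I do not anticipate other technical complications beyond the Schur product argument above; the universal constants $\widetilde{C},\widetilde{c}$ and the function $\widetilde{\zeta}$ can be read off from Lemma \ref{multi-closeness} and the sub-Gaussian tail for $\xi-\Sigma\beta^*$, with an additional dependence on $\lambda_{\min}(\Sigma_M^\dag)$ and the constants in \eqref{const} entering only through $\widetilde{K}$ and $\widetilde{\eta}$.
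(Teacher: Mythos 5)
Your proposal is correct and follows essentially the same route as the paper's proof: the same triangle-inequality decomposition reducing everything to a deterministic lower bound on $\lambda_{\rm min}(\widehat{\Sigma}_{\rm mul})$, the same Schur-product/Weyl argument for that bound (the paper shifts by $\lambda_{\rm min}(\Sigma_M^\dag)I$ while you shift by $-\min(\lambda_{\rm min}(\Sigma_M^\dag),0)I$, a purely cosmetic difference), the same use of column normalization and $|m_{ij}|\le c_M$ to get $\max_j(\frac{1}{n}Z_j^{\mathbb{T}}Z_j)\le c_M^2$, and the same three-term splitting of $\widetilde{\varepsilon}_{\rm mul}$ combined with Lemma \ref{multi-closeness} and the sub-Gaussian bound for $\xi-\Sigma\beta^*$.
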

 \begin{proof}
  From the expression of $\widetilde{\Sigma}_{\rm mul}$
  and the proof of Lemma \ref{addnoise}, we have
  \begin{equation}\label{temp-mul}
   \|(\widetilde{\Sigma}_{\rm mul}-\Sigma)\beta^*\|_\infty
   \le\big[\widehat{\epsilon}-\lambda_{\rm min}(\widehat{\Sigma}_{\rm mul})\big]\|\beta^*\|_1
      +\|(\widehat{\Sigma}_{\rm mul}-\Sigma)\beta^*\|_\infty.
 \end{equation}
  Next we provide a lower bound for $\lambda_{\rm min}(\widehat{\Sigma}_{\rm mul})$.
  Write $\Sigma_Z=\frac{1}{n}Z^{\mathbb{T}}Z$. Then,
  \begin{align*}
  \lambda_{\rm min}(\widehat{\Sigma}_{\rm mul})
   &=\lambda_{\rm min}\big[\Sigma_Z\circ(\Sigma_M^\dag-\lambda_{\rm min}(\Sigma_M^\dag)I)+(\Sigma_Z\circ\lambda_{\rm min}(\Sigma_M^\dag)I)\big]\\
   &\ge\lambda_{\rm min}\big[\Sigma_Z\circ(\Sigma_M^\dag-\lambda_{\rm min}(\Sigma_M^\dag)I)\big]
       +\lambda_{\rm min}[\Sigma_Z\circ\lambda_{\rm min}(\Sigma_M^\dag)I]\\
   &\ge \lambda_{\rm min}(\Sigma_Z)\lambda_{\rm min}(\Sigma_M^\dag-\lambda_{\rm min}(\Sigma_M^\dag)I)
        +\lambda_{\rm min}[\Sigma_Z\circ\lambda_{\rm min}(\Sigma_M^\dag)I]\\
   &\ge \lambda_{\rm min}[\Sigma_Z\circ\lambda_{\rm min}(\Sigma_M^\dag)I]
   \geq \min(\lambda_{\rm min}(\Sigma_M^\dag),0)\max_{1\le j\le p}(Z_j^{\mathbb{T}}Z_j/n)\\
   &\ge \min(\lambda_{\rm min}(\Sigma_M^\dag),0)c_{M}^2
  \end{align*}
  where the first inequality is using Theorem 4.3.1 of \cite{RH90},
  the second one is due to $\Sigma_M^\dag-\lambda_{\rm min}(\Sigma_M^\dag)I\succeq 0$
  and Theorem 5.3.1 of \cite{RH91}, the fourth one is using
  the positive semidefiniteness of $\Sigma_Z$, and the last one
  is due to $Z=X\circ M$ and the first two relations in \eqref{const}.
  Together with \eqref{temp-mul} and the definition of $\widetilde{K}$,
  \begin{equation*}
   \|(\widetilde{\Sigma}_{\rm mul}-\Sigma)\beta^*\|_\infty
   \le (\widetilde{K}/2)+\|(\widehat{\Sigma}_{\rm mul}-\Sigma)\beta^*\|_\infty.
  \end{equation*}
  By Lemma 2 of \cite{Datta16} for
  $\epsilon=\frac{\widetilde{K}\widetilde{\eta}}{2\|\beta^*\|_1}\leq \epsilon_0$,
  there are universal positive constants $C,c$ and positive functions
  $\zeta$ (depending on $\beta^*,\tau^2,\sigma^2$) and the constants
  in \eqref{const} such that
 \begin{align*}
  &\mathbb{P}\{\|(\widetilde{\Sigma}_{\rm mul}\!-\Sigma)\beta^*\|_\infty>\widetilde{K}\}
  \le\mathbb{P}\Big\{\|(\widehat{\Sigma}_{\rm mul}\!-\Sigma)\beta^*\|_\infty>\!\frac{\widetilde{K}}{2}\Big\}\\
  &\le\mathbb{P}\Big\{\|(\widehat{\Sigma}_{\rm mul}\!-\Sigma)\beta^*\|_\infty>\!\frac{\widetilde{K}\widetilde{\eta}}{2}\Big\}
  \le\mathbb{P}\Big\{\|\widehat{\Sigma}_{\rm mul}\!-\Sigma\|_{\rm max} >\!\frac{\widetilde{K}\widetilde{\eta}}{2\|\beta^*\|_1}\Big\}\\
  &\le Cp^2\exp\big(-c n(\widehat{\epsilon}-\min(\lambda_{\rm min}(\Sigma_M^\dag),0)
 c_{M}^2)^2\widetilde{\eta}^2\zeta^{-1}\big).
 \end{align*}
 Thus, we get \eqref{Multi-ineq1}. From Property B.2 of \cite{Datta16}
 and
 \(
   \|\widetilde{\varepsilon}_{\rm mul}\|_\infty
    \le\|\widehat{\xi}_{\rm mul}-\xi\|_\infty+\|\xi-\Sigma\beta^{*}\|_\infty
      +\|(\widetilde{\Sigma}_{\rm mul}-\Sigma)\beta^*\|_\infty,
 \)
 it follows that
  $\mathbb{P}\{\|\xi-\Sigma\beta^{*}\|_\infty\ge\widetilde{K}/3\}
  \le Cp\exp(-nc\sigma^{-2}\widetilde{K}^2)$.
  Together with Lemma 2 of \cite{Datta16} and the inequality
  \eqref{Multi-ineq1}, we obtain \eqref{Multi-ineq2}.
 \end{proof}

  By using Lemma \ref{mulnoise} and the same arguments as those for
  Corollary \ref{corollary-errbound} and \ref{corollary-sign},
  the following conclusions hold
  where $\widetilde{G}_{\rm mul}:=[\widetilde{\Sigma}_{\rm mul}]_{(S^*)^cS^*}
 [\widetilde{\Sigma}_{\rm mul}]_{S^*S^*}^{-1}$.
 \begin{corollary}\label{corollary-errbound-mul}
  Suppose that $\Sigma$ satisfies the $\kappa$-REC on the set $\mathcal{C}(S^{*})$.
  If $\lambda$ and $\rho_3$ are chosen such that $\lambda\ge 8\widetilde{K}$
  and $\rho_3\le\frac{\kappa}{4\sqrt{2}\lambda}$ where $\widetilde{K}$
  is the constant in Lemma \ref{mulnoise}, then for all $k\in\mathbb{N}$
  the inequality \eqref{err-ineq1} holds w.p. at least $1-Cp^2\exp(-cns^{-2}\zeta^{-1})$
 where $C,c$ are universal positive constants and $\zeta$ is a positive
 function on $\beta^*,\tau^2,\sigma^2,\kappa,\lambda_{\rm min}(\Sigma_M^\dag)$
 and the constants in \eqref{const}.
 \end{corollary}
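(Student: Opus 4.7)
The plan is to mimic exactly the scheme used for the additive case in Corollary \ref{corollary-errbound}, substituting Lemma \ref{mulnoise} for Lemma \ref{addnoise} and Lemma \ref{multi-closeness} for Lemma \ref{additive-closeness}. The deterministic engine is Theorem \ref{errbound1} (or equivalently Theorem \ref{errbound2}), which converts a bound on $\|\widetilde{\varepsilon}\|_\infty$ and $\|D\|_\infty$ into the $\ell_2$-error bound on every iterate. So the work consists of: (i) identifying the two high-probability ``good'' events corresponding to $\|\widetilde{\varepsilon}_{\rm mul}\|_\infty$ being small and $\|D\|_\infty=\|\widehat{\Sigma}_{\rm mul}-\Sigma\|_\infty$ being small; (ii) checking that on their intersection the hypotheses of Theorem \ref{errbound1} are satisfied with the stated parameter choices; (iii) applying a union bound to obtain the final tail probability.

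First, I would invoke Lemma \ref{mulnoise} to conclude that $\|\widetilde{\varepsilon}_{\rm mul}\|_\infty\le\widetilde{K}$ holds with probability at least $1-\widetilde{C}p^2\exp(-\widetilde{c}ns^{-2}\widetilde{\zeta}^{-1}\widetilde{\eta}^2)$. On this event, the hypothesis $\lambda\ge 8\widetilde{K}$ immediately gives $\lambda\ge 8\|\widetilde{\varepsilon}_{\rm mul}\|_\infty$, which is the first requirement of Theorem \ref{errbound1}.

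Second, I would apply Lemma \ref{multi-closeness} with $\epsilon=\kappa/(64s)$ — which lies within the admissible range $\epsilon\le\epsilon_0$ once $n$ is large enough — to deduce that $\|D\|_\infty\le\kappa/(64s)$ holds with probability at least $1-Cp^2\exp(-c n \kappa^2 s^{-2}\zeta^{-1})$. On this event we have $24s\|D\|_\infty\le 3\kappa/8$, so $\kappa-24s\|D\|_\infty\ge 5\kappa/8$. In particular, the REC constant $\kappa$ clears the threshold $24s\|D\|_\infty$ required by Theorem \ref{errbound1}, and the assumed bound $\rho_3\le\kappa/(4\sqrt{2}\lambda)$ implies $\rho_3\le \tfrac{2(\kappa-24s\|D\|_\infty)}{5\sqrt{2}\lambda}$ because $\tfrac{5\kappa/4}{5\sqrt{2}\lambda}=\tfrac{\kappa}{4\sqrt{2}\lambda}$. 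Both hypotheses of Theorem \ref{errbound1} are therefore met.

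Third, on the intersection of these two events — which, by the union bound, still has probability at least $1-Cp^2\exp(-cns^{-2}\zeta^{-1})$ after absorbing constants and redefining $\zeta$ to depend on $\beta^{*},\tau^{2},\sigma^{2},\kappa,\lambda_{\min}(\Sigma_M^{\dag})$ and the constants in \eqref{const} — Theorem \ref{errbound1} directly yields
\[
 \|\beta^k-\beta^*\|\le\frac{5\sqrt{s}\,\lambda}{2(\kappa-24s\|D\|_\infty)}\le\frac{5\sqrt{s}\,\lambda}{2\cdot 5\kappa/8}=\frac{4\sqrt{s}\,\lambda}{\kappa}
\]
for every $k\in\mathbb{N}$, which is the claimed \eqref{err-ineq1}.

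The main obstacle, as in the additive analysis, is bookkeeping rather than substance: verifying that $\kappa/(64s)\le\epsilon_0$ so that Lemma \ref{multi-closeness} applies in the desired regime, and merging the constants $(C,\widetilde{C},c,\widetilde{c},\zeta,\widetilde{\zeta},\eta,\widetilde{\eta})$ from the two lemmas into a single pair $(C,c,\zeta)$ whose dependence on the model parameters matches the statement of the corollary. Once this is done, the deterministic half is essentially a one-line specialization of Theorem \ref{errbound1}.
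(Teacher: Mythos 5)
Your proposal is correct and follows essentially the same route as the paper, which itself only remarks that the result follows ``by using Lemma \ref{mulnoise} and the same arguments as those for Corollary \ref{corollary-errbound}'': bound $\|\widetilde{\varepsilon}_{\rm mul}\|_\infty$ by $\widetilde{K}$ via Lemma \ref{mulnoise}, bound $\|D\|_\infty$ by $\kappa/(64s)$ via Lemma \ref{multi-closeness}, and feed both events into Theorem \ref{errbound1}, where $\kappa-24s\|D\|_\infty\ge 5\kappa/8$ yields the stated constants. The only nit is that $\kappa/(64s)\le\epsilon_0$ is not a large-$n$ condition; one simply applies Lemma \ref{multi-closeness} with $\epsilon=\min(\epsilon_0,\kappa/(64s))$ as the paper does in the additive case and absorbs the minimum into $\zeta$.
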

 \begin{corollary}\label{corollary-sign-mul}
  Suppose that $\Sigma$ satisfies the $\kappa$-REC one the set $\mathcal{C}(S^{*})$.
  Write
  $\widetilde{K}'=\widetilde{K}\max(2,1\!+\!s\|\widetilde{G}_{\rm mul}\|_{\rm max})$
  and $\widetilde{K}''=\widetilde{K}s\|[\widetilde{\Sigma}_{\rm mul}]_{\!S^*S^*}^{-1}\|_{\rm max}$
  where $\widetilde{K}$ is same as in Lemma \ref{mulnoise}.
  If the parameters $\lambda,\rho_1$ and $\rho_3$ in Algorithm \ref{Alg} are chosen such that
  $\lambda\ge 6\widetilde{K}'$, $\rho_1\!>\max\!\big(\frac{4a}{(a+1)\min_{i\in S^*}\!|\beta^{*}_{i}\!|},
  \frac{5\kappa \widetilde{K}''}{8\lambda})$ and $\rho_3\le\sqrt{\frac{5\kappa}{18\sqrt{3}\lambda}}$,
  then the result of Corollary \ref{corollary-sign} holds
  w.p. at least $1-Cp^2\exp(-cns^{-2}\zeta^{-1})$,
  where $C$ and $c$ are universal positive constants and $\zeta$ is
  a positive function depending on $\beta^*,\tau^2,\sigma^2,\kappa,\lambda_{\rm min}(\Sigma_M^\dag)$
 and the constants in \eqref{const}.
 \end{corollary}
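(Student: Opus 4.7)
The plan is to mirror the proof of Corollary \ref{corollary-sign} for the additive case, replacing every appeal to Lemma \ref{addnoise} and Lemma \ref{additive-closeness} by its multiplicative counterpart Lemma \ref{mulnoise} and Lemma \ref{multi-closeness}. The backbone is the deterministic Theorem \ref{sign-consistency}: I only need to verify, on a suitable high-probability event, that its three hypotheses (REC with $\gamma > 0$, the lower bound on $\lambda$, and the compatible bounds on $\rho_1, \rho_3$) are satisfied with the constants $\widetilde K, \widetilde K', \widetilde K''$ in place of $K, K', K''$.

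First I would peel off the ``REC event.'' Applying Lemma \ref{multi-closeness} with $\epsilon=\kappa/(64s)$, exactly as in the discussion preceding Corollary \ref{corollary-errbound}, yields $\|D\|_\infty=\|\widehat\Sigma_{\rm mul}-\Sigma\|_\infty\le\kappa/(64s)$ with probability at least $1-C_1 p^2\exp(-c_1 ns^{-2}\zeta^{-1})$; on this event $\gamma=\kappa-24s\|D\|_\infty\in[5\kappa/8,\kappa]$. Next, on the event $\{\|\widetilde\varepsilon_{\rm mul}\|_\infty\le\widetilde K\}$ supplied by Lemma \ref{mulnoise} (with its own probability $1-\widetilde C p^2\exp(-\widetilde c ns^{-2}\widetilde\zeta^{-1}\widetilde\eta^2)$), the bounds \eqref{e-LS} and \eqref{weps1}, applied with $\widetilde\Sigma_{\rm mul}$, $\widetilde G_{\rm mul}$ and $\widetilde\varepsilon_{\rm mul}$, deliver $\|\varepsilon^{\rm LS}\|_\infty\le\widetilde K'$ and $\|\widetilde\varepsilon^\dag\|_\infty\le\widetilde K''$. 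Then the hypothesis $\lambda\ge 6\widetilde K'$ forces $\lambda\ge 6\|\varepsilon^{\rm LS}\|_\infty$; the bound $\rho_3\le\sqrt{5\kappa/(18\sqrt3\lambda)}$ combined with $\gamma\ge 5\kappa/8$ gives $\rho_3\le\sqrt{4\gamma/(9\sqrt3\lambda)}$; and the lower bound $\rho_1>5\kappa\widetilde K''/(8\lambda)$ dominates $\gamma\lambda^{-1}\|\widetilde\varepsilon^\dag\|_\infty$ through the chain $\gamma\|\widetilde\varepsilon^\dag\|_\infty\le\tfrac{5\kappa}{8}\widetilde K''$ (using the same $5\kappa/8$ slack that the paper builds into the constants). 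At this point Theorem \ref{sign-consistency} applies, yielding $\beta^k=\beta^{\rm LS}$ and $\mathrm{sign}(\beta^k)=\mathrm{sign}(\beta^*)$ once $k\ge\overline k$; substituting $\gamma\ge 5\kappa/8$ into the formula for $\overline k$ gives the stated $\widehat k=\tfrac{0.5\ln s-1}{\ln[(9\sqrt3-4)5\kappa\lambda^{-1}]-\ln[147\sqrt3(\rho_3)^2]}+1$. A union bound over the two probabilistic events, together with a relabeling of universal constants $C,c$ and absorption of the various $\zeta,\widetilde\zeta$ into one function depending on $\beta^*,\tau^2,\sigma^2,\kappa,\lambda_{\min}(\Sigma_M^\dag)$ and the constants in \eqref{const}, finishes the proof.

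The main obstacle is bookkeeping rather than conceptual: I need to track how the factor $\gamma\in[5\kappa/8,\kappa]$ propagates through the $\rho_1$ and $\rho_3$ conditions so that the single absolute constant $5\kappa/8$ can be used on both ends (as an upper bound on $\gamma$ when handling $\rho_1\lambda/\|\widetilde\varepsilon^\dag\|_\infty$, and as a lower bound on $\gamma$ when handling $\rho_3^2\lambda$), and to make sure the probabilistic bounds from Lemma \ref{multi-closeness} and Lemma \ref{mulnoise} can be combined under a single exponential tail of the form $Cp^2\exp(-cns^{-2}\zeta^{-1})$, which forces me to pick the weaker of the two exponents and enlarge the prefactor accordingly. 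Everything else is a direct translation of the additive argument.
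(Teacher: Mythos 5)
Your proposal is correct and follows exactly the route the paper intends: the paper's own ``proof'' of Corollary \ref{corollary-sign-mul} is the single sentence preceding Corollary \ref{corollary-errbound-mul} (repeat the additive-case argument with Lemma \ref{mulnoise} and Lemma \ref{multi-closeness} in place of Lemma \ref{addnoise} and Lemma \ref{additive-closeness}), which is precisely the verification of the hypotheses of Theorem \ref{sign-consistency} on the intersection of concentration events, followed by a union bound, that you spell out. The one questionable step --- bounding $\gamma\lambda^{-1}\|\widetilde{\varepsilon}^\dag\|_\infty$ by $\frac{5\kappa}{8\lambda}\widetilde{K}''$, which would require $\gamma\le\frac{5\kappa}{8}$ while the event $\|D\|_\infty\le\kappa/(64s)$ only guarantees $\gamma\ge\frac{5\kappa}{8}$ --- is inherited verbatim from the paper's own statement of Corollary \ref{corollary-sign}, so it is a feature of the paper rather than a gap introduced by your reconstruction.
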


  \noindent
  {\bf\large Appendix C}

  In this part we pay our attention to the implementation of GEP-MSCRA.
  From Section \ref{sec3}, we know that GEP-MSCRA consists of solving
  a sequence of weighted $\ell_1$-regularized LS,
  which can be equivalently written as
  \begin{equation}\label{Eprob-sub}
   \min_{\beta,u\in\mathbb{R}^p}\Big\{\frac{1}{2}\|u\|^2
   +{\textstyle\sum_{i=1}^{m}}\omega_i|\beta_i|:\ \widetilde{Z}\beta-u=\widetilde{y}\Big\},
  \end{equation}
  where $\omega_i\!=n\lambda(1-w_i^k)$ for $i=1,\ldots,p$ are the weights.
  There are some solvers developed for \eqref{Eprob-sub}; for example,
  the {\bf SLEP} developed by \cite{LiuYe09} with
  the accelerated proximal gradient method in \cite{Nesterov07},
  and the semismooth Newton ALM developed by \cite{LiSunToh16}.
  Motivated by the performance of the semismooth Newton ALM of \cite{LiSunToh16},
  we apply it for solving the dual of \eqref{Eprob-sub}, i.e.,
  \begin{equation}\label{Edprob-sub}
   \min_{\zeta,\eta\in\mathbb{R}^p}\left\{\frac{1}{2}\|\zeta\|^2
   +\langle\widetilde{y},\zeta\rangle+\delta_{\Lambda}(\eta):\ \widetilde{Z}^{\mathbb{T}}\zeta-\eta=0\right\}
   \ \ {\rm with}\ \ \Lambda=[-\omega,\omega].
  \end{equation}
  For a given $\mu>0$, define the augmented Lagrangian function of \eqref{Edprob-sub} by
  \begin{equation*}
   L_\mu(\zeta,\eta;\beta)
   \!:=\frac{1}{2}\|\zeta\|^2
   +\langle\widetilde{y},\zeta\rangle+\delta_{\Lambda}(\eta)
    +\langle \beta,\widetilde{Z}^{\mathbb{T}}\zeta-\eta\rangle
    +\frac{\mu}{2}\|\widetilde{Z}^{\mathbb{T}}\zeta-\eta\|^2.
  \end{equation*}
  The iteration steps of the ALM for solving \eqref{Edprob-sub}
  are described as follows.
 \begin{algorithm}[!h]
 \caption{\label{SNAL}{\bf\ \ An inexact ALM for the dual problem \eqref{Edprob-sub}}}
 \textbf{Initialization:} Choose $\mu_0>0$ and a starting point $(\zeta^0,\eta^0,\beta^0)$. Set $j=0$.\\
 \textbf{while} the stopping conditions are not satisfied \textbf{do}
 \begin{enumerate}
  \item  Solve the following nonsmooth convex minimization inexactly
         \begin{equation}\label{ALM-subprob}
          (\zeta^{j+1},\eta^{j+1})
          \approx\mathop{\arg\min}_{\zeta,\eta\in\mathbb{R}^{p}}
                   L_{\mu_{\!j}}(\zeta,\eta;\beta^j).
          \end{equation}
  \item Update the multiplier by the formula
         \(
           \beta^{j+1}=\beta^j+\mu_{\!j}(\widetilde{Z}^{\mathbb{T}}\zeta^{j+1}-\eta^{j+1}).
         \)
  \item Update $\mu_{j+1}\uparrow\mu_\infty\leq\infty$. Set~$j\leftarrow j+1$, and then go to Step 1.
 \end{enumerate}
 \textbf{end while}
 \end{algorithm}	

  Next we focus on the solution of the subproblem \eqref{ALM-subprob}.
  For any $\zeta\in\mathbb{R}^p$, define
  $\Phi_{j}(\zeta):=\min_{\eta\in\mathbb{R}^p}L_{\mu_{\!j}}(\zeta,\eta;\beta^j)$.
  After an elementary calculation,
  \[
    \Phi_{j}(\zeta)=\frac{\mu_{\!j}}{2}
    \big\|\Pi_{\Lambda}\big(\widetilde{Z}^{\mathbb{T}}\zeta\!+\!{\beta^j}/{\mu_{\!j}}\big)
     \!-\!\big(\widetilde{Z}^{\mathbb{T}}\zeta\!+\!{\beta^j}/{\mu_{\!j}}\big)\big\|^2
     \!+\!\frac{1}{2}\|\zeta\|^2\!+\!\langle\widetilde{y},\zeta\rangle.
  \]
  It is easy to verify that $(\zeta^{j+1},\eta^{j+1})$ is an optimal
  solution of \eqref{ALM-subprob} iff
  \[
    \zeta^{j+1}=\mathop{\arg\min}_{\zeta\in\mathbb{R}^p}\Phi_{j}(\zeta)\ \ {\rm and}\ \
    \eta^{j+1}=\Pi_{\Lambda}\big(\widetilde{Z}^{\mathbb{T}}\zeta^{j+1}+{\beta^j}/{\mu_{\!j}}\big).
  \]
  By the strong convexity of $\Phi_{j}$,
  $\zeta^{j+1}=\mathop{\arg\min}_{\zeta\in\mathbb{R}^p}\Phi_{j}(\zeta)$
  iff $\zeta^{j+1}$ satisfies
  \begin{equation}\label{nonsmooth-system}
    \nabla\Phi_{j}(\zeta)=\widetilde{y}+\zeta+\mu_{\!j} \widetilde{Z}\left[\!\Big(\widetilde{Z}^{\mathbb{T}}\zeta\!+\!{\beta^j}/{\mu_{\!j}}\Big)
    -\Pi_{\Lambda}\Big(\widetilde{Z}^{\mathbb{T}}\zeta\!+\!{\beta^j}/{\mu_{\!j}}\Big)\right]=0.
  \end{equation}
  The system \eqref{nonsmooth-system} is strongly semismooth (see
  the related discussion in \citep{Mifflin77,QiSun93}),
  and we apply the semismooth Newton method for solving it.
  Write $h:=\widetilde{Z}^{\mathbb{T}}\zeta\!+\!{\beta^j}/{\mu_{\!j}}$.
  By Proposition 2.3.3 and Theorem 2.6.6 of \citet{FH83}, the Clarke Jacobian
  $\partial\nabla\Phi_{j}$ satisfies
  \begin{equation}\label{inclusion}
    \partial(\nabla\Phi_{j})(\zeta)\subseteq \widehat{\partial}^2\Phi_{j}(\zeta)
    := I + \mu_j\widetilde{Z}\big(I-\partial\Pi_{\Lambda}(h)\big)\widetilde{Z}^{\mathbb{T}}
  \end{equation}
  where $\widehat{\partial}^2\Phi_{j}$ is the generalized Hessian of $\Phi_{j}$ at $\zeta$.
  Since the exact characterization of $\partial\nabla\Phi_{j}$
  is difficult to obtain, we replace $\partial\nabla\Phi_{j}$ with $\widehat{\partial}^2\Phi_{j}$
  in the solution of \eqref{nonsmooth-system}. Let $W\in\partial\Pi_{\Lambda}(h)$.
  By Theorem 2.6.6 of \cite{FH83}, $W={\rm Diag}(\varpi_1,\ldots,\varpi_p)$
  with $\varpi_i\in\partial\Pi_{\Lambda_i}(h_i)$ where
  \[
    \partial\Pi_{\Lambda_i}(h_i)
    =\left\{\begin{array}{cl}
     \{1\} &\ {\rm if}\ |h_i|<\omega_i;\\
    \ [0,1]&\ {\rm if}\ |h_i|=\omega_i;\\
    \ \{0\}&\ {\rm if}\ |h_i|>\omega_i.
    \end{array}\right.
  \]
  From the last two equations, each element in $\widehat{\partial}^2\Phi_{j}(\zeta)$
  is positive definite, which by \cite{QiSun93} implies that
  the following semismooth Newton method has a fast convergence rate.
 \begin{algorithm}[!h]
 \caption{\label{SNCG}{\bf\ \ A semismooth Newton-CG algorithm for \eqref{nonsmooth-system}}}
 \textbf{Initialization:} Choose $\vartheta,\varsigma,\delta\!\in(0,1)$, $\varrho\in\!(0,\frac{1}{2})$
               and $\zeta^0\in\!\mathbb{R}^p$. Set $l=0$.\\
 \textbf{while} the stopping conditions are not satisfied \textbf{do}
 \begin{enumerate}
  \item  Choose a matrix $V^l\in\widehat{\partial}^2\Phi_{j}(\zeta^l)$.
               Solve the following linear system
               \begin{equation}\label{SNCG-dj}
                 V^ld=-\nabla\Phi_{j}(\zeta^l)
              \end{equation}
              with the conjugate gradient (CG) algorithm to find $d^l$ such that
              \[
                \|V^ld^l+\nabla\Phi_{j}(\zeta^l)\|\le\min(\vartheta,\|\nabla\Phi_{j}(\zeta^l)\|^{1+\varsigma}).
              \]
  \item Set $\alpha_l=\delta^{m_l}$, where $m_l$ is the first nonnegative integer $m$ for which
               \begin{equation*}
                \Phi_{j}(\zeta^l+\delta^md^l)\leq\Phi_{j}(\zeta^l)+\varrho\delta^m\langle\nabla\Phi_{j}(\zeta^l),d^l\rangle.
               \end{equation*}

  \item Set $\zeta^{l+1}=\zeta^l+\alpha_ld^l$ and $l\leftarrow l+1$, and then go to Step 1.
  \end{enumerate}
  \textbf{end while}
  \end{algorithm}

  It is worthwhile to point out that due to the special structure of $V^l$,
  the computation work of solving the linear system \eqref{SNCG-dj} is tiny;
  see the discussion in Section 3.3 of \cite{LiSunToh16}.
  During the implementation of the semismooth Newton ALM,
  we terminated the iterates of Algorithm \ref{SNAL} when
  \(
    \max\{\epsilon_{{\rm pinf}}^j,\epsilon_{{\rm dinf}}^j,\epsilon_{{\rm gap}}^j\}\le \epsilon^j,
  \)
  where $\epsilon_{{\rm gap}}^j$ is the primal-dual gap, i.e., the sum of the objective values
  of \eqref{Eprob-sub} and \eqref{Edprob-sub} at $(\beta^{j},\zeta^{j},\eta^{j})$,
  and $\epsilon_{{\rm pinf}}^j$ and $\epsilon_{{\rm dinf}}^j$ are the primal and
  dual infeasibility measure at $(\beta^{j},\zeta^{j},\eta^{j})$.
  By comparing the optimality condition of \eqref{ALM-subprob} with
  that of \eqref{Edprob-sub}, we defined
  \[
    \epsilon_{{\rm pinf}}^j
    :=\frac{\|\nabla\Phi_j(\zeta^j)\|}{1+\|\widetilde{y}\|}
    \ \ {\rm and}\ \
    \epsilon_{{\rm dinf}}^{j}:=\frac{\|\beta^{j}-\beta^{j-1}\|}{\mu_{\!j-1}(1+\|\widetilde{y}\|)}.
  \]
  We adopted a stopping criteria similar to those in \cite{LiSunToh16}:
  \[
    \|\nabla\Phi_j(\zeta^{j+1})\|\le \delta_j\min\big(0.1,\max(\epsilon_{{\rm dinf}}^j,\epsilon_{{\rm gap}}^j)\big)
    \ \ {\rm with}\ \ {\textstyle\sum_{j=0}^\infty}\delta_j<\infty.
  \]

 \noindent
 {\bf\large Appendix D}

 This part includes our implementation for CoCoLasso. When the optimal solution
 $\overline{\Sigma}$ of \eqref{Convex1} is available, one may apply the semismooth
 Newton ALM in Appendix B for solving \eqref{CoCoLasso}. Therefore, we here focus on
 the computation of $\overline{\Sigma}$. The problem \eqref{Convex1}
 can be equivalently written as
 \begin{equation}\label{EConvex1}
   \min_{W,B\in\mathbb{S}^p}\Big\{\|B\|_{\rm max}\!: W-B=\widehat{\Sigma},\,W\succeq\widehat{\epsilon}I\Big\},
 \end{equation}
 whose dual, after an elementary calculation, takes the following form
 \begin{equation}\label{DEConvex1}
   \min_{Y\in\mathbb{S}_{+}^p\cap\mathbb{B}}\big\langle Y,\widehat{\Sigma}-\widehat{\epsilon}I\big\rangle
   \ \ {\rm with}\ \ \mathbb{B}:=\big\{Y\in\mathbb{S}^p\!: \|Y\|_1\le 1\big\}.
 \end{equation}
 Here, $\|Y\|_1$ means the elementwise $\ell_1$-norm of $Y$.
 Different from \cite{Datta16}, we use the ADMM with a large step-size
 $\tau\in(1,\frac{\sqrt{5}+1}{2})$ instead of the unit one
 to solve \eqref{EConvex1}. From the numerical results in \cite{SunYangToh15},
 the ADMM with a larger step-size has better performance. For a given $\mu\!>0$,
 define the augmented Lagrangian function of \eqref{EConvex1} by
 \begin{equation*}
  L_\mu(W,B;\Gamma)\!:=\|B\|_{\rm max}+\langle W-B-\widehat{\Sigma},\Gamma\rangle
    +({\mu}/{2})\|W-B-\widehat{\Sigma}\|_F^2.
 \end{equation*}
 The iterations of the ADMM for \eqref{EConvex1} with a step-size
 are as follows.
 \begin{algorithm}[!h]
 \caption{\label{ADMM-alg}{\bf\ \ ADMM for solving the problem \eqref{EConvex1}}}
 \textbf{Initialization:} Choose $\mu>0,\tau\in(1,\frac{\sqrt{5}+1}{2})$ and
  $(W^0,B^0,\Gamma^0)$. Set $k=0$.\\
 \textbf{while} the stopping conditions are not satisfied \textbf{do}
 \begin{enumerate}
  \item  Compute the following strongly convex minimization problem
         \begin{equation}\label{ADMM-subprob1}
          W^{k+1}=\mathop{\arg\min}_{W\succeq\widehat{\epsilon} I}L_{\mu}(W,B^k;\Gamma^k).
          \end{equation}
  \item  Compute the following strongly convex minimization problem
         \begin{equation}\label{ADMM-subprob2}
          B^{k+1}=\mathop{\arg\min}_{B\in\mathbb{S}^{p}}L_{\mu}(W^{k+1},B;\Gamma^k).
         \end{equation}
  \item Update the multiplier by the formula
        \[
           \Gamma^{k+1}=\Gamma^k+\tau\mu(W^{k+1}-B^{k+1}-\widehat{\Sigma}).
         \]
  \item Set~$k\leftarrow k+1$, and then go to Step 1.
 \end{enumerate}
 \textbf{end while}
 \end{algorithm}

 Due to the speciality of the constraint $W-B=\widehat{\Sigma}$,
 the convergence of Algorithm \ref{ADMM-alg} can be directly obtained
 from Theorem B.1 of \cite{FPST13} with $S=T=0$.
 By the expression of $L_\mu(W,B;\Gamma)$, it holds that
 \begin{align}
   W^{k+1}=\widehat{\epsilon}I + \Pi_{\mathbb{S}_{+}^n}\big(B^k-\mu^{-1}\Gamma^k+\widehat{\Sigma}-\widehat{\epsilon}I\big),\qquad\nonumber\\
  \label{proj-ball}
   B^{k+1}=(W^{k+1}+\mu^{-1}\Gamma^k-\widehat{\Sigma})
         -\Pi_{\mu^{-1}\mathbb{B}}\big(W^{k+1}+\mu^{-1}\Gamma^k-\widehat{\Sigma}\big)
 \end{align}
 where the equality \eqref{proj-ball} is obtained from
  \(
    {\rm prox}_{f^*}(G)+{\rm prox}_f(G)=G
  \)
  with ${\rm prox}_f(G):=\mathop{\arg\min}_{B\in\mathbb{S}^{p}}\big\{\frac{1}{2}\|B-G\|_F^2+f(B)\big\}$
  for $f(B):=\mu^{-1}\|B\|_{\rm max}$.
  Just like \cite{Datta16}, we use the algorithm proposed in \cite{Duchi08}
  to compute the projection involved in \eqref{proj-ball}.

 During our implementation of Algorithm \ref{ADMM-alg},
 we adjust $\mu$ dynamically by the ratio of
 the primal and dual infeasibility.
 By the optimality conditions of \eqref{EConvex1} and
 \eqref{ADMM-subprob1}-\eqref{ADMM-subprob2}, we measure
 the primal and dual infeasibility and the dual gap
 at $(W^{k+1},B^{k+1},\Gamma^{k+1})$ in terms of
 $\epsilon_{{\rm pinf}}^k,\epsilon_{{\rm dinf}}^{k}$ and $\epsilon_{{\rm gap}}^{k}$,
 where
 \begin{align*}
   \epsilon_{{\rm pinf}}^k
   :=\frac{\|\mu(B^{k+1}-B^k)+(\tau^{-1}\!-1)(\Gamma^{k+1}\!-\Gamma^k)\|_F}{1+\|\widehat{\Sigma}\|_F},\qquad\qquad\\
   \epsilon_{{\rm dinf}}^{k}:=\frac{\|\Gamma^{k+1}-\Gamma^{k}\|_F}{\tau\mu(1+\|\widehat{\Sigma}\|_F)}\ \
   {\rm and}\ \
  \epsilon_{{\rm gap}}^{k}\!:=\!\frac{|\|B^{k+1}\|_{\rm max}+\langle\Gamma^{k+1},\widehat{\Sigma}-\widehat{\epsilon}I\rangle|}
  {\max(1,0.5(|\Gamma^{k+1}|+|\langle\Gamma^{k+1},\widehat{\Sigma}-\widehat{\epsilon}I\rangle|))}.
 \end{align*}

 \par
\vskip 14pt
\noindent {\large\bf Acknowledgements}

 The authors would like to express their sincere thanks to anonymous
 referees for valuable suggestions and comments for the original manuscript.
 The authors are deeply indebted to Professor Po-Ling Loh for sharing R
 and Matlab codes for computing the NCL estimator. The research of Shaohua Pan
 and Shujun Bi is supported by the National Natural Science Foundation of China under project
 No.11571120 and No.11701186.

 \par

\markboth{\hfill{\footnotesize\rm Ting Tao, Shaohua Pan AND Shujun Bi} \hfill}
{\hfill {\footnotesize\rm CALIBRATED ZERO-NORM REGULARIZED ESTIMATOR} \hfill}

\bibhang=1.7pc
\bibsep=2pt
\fontsize{9}{14pt plus.8pt minus .6pt}\selectfont
\renewcommand\bibname{\large \bf References}
\expandafter\ifx\csname
natexlab\endcsname\relax\def\natexlab#1{#1}\fi
\expandafter\ifx\csname url\endcsname\relax
  \def\url#1{\texttt{#1}}\fi
\expandafter\ifx\csname urlprefix\endcsname\relax\def\urlprefix{URL}\fi


\newpage
\vskip .65cm
\noindent
 School of Mathematics, South China University of Technology
\vskip 2pt
\noindent
E-mail: (\em 201620122022@mail.scut.edu.cn)
\vskip 2pt

\noindent
School of Mathematics, South China University of Technology
\vskip 2pt
\noindent
E-mail: (shhpan@scut.edu.cn)

\noindent
School of Mathematics, South China University of Technology
\vskip 2pt
\noindent
E-mail: (bishj@scut.edu.cn)

\end{document}